\documentclass[11pt,a4paper]{article}
\usepackage[margin=27mm]{geometry}
\usepackage{amsmath,amssymb,amsthm,mathtools}
\usepackage{booktabs,array,longtable}
\usepackage{microtype}
\usepackage[numbers,sort&compress]{natbib}
\usepackage[hidelinks]{hyperref}
\usepackage{url}
\hypersetup{pdftitle={Asymptotic equivalence and exact values for second-order Zarankiewicz numbers},
  pdfauthor={Nikita Lebedev},
  pdfkeywords={Zarankiewicz numbers, rectangle-free matrices, sums of squares, combinatorial constructions}}
\usepackage{enumitem}
\usepackage{needspace}
\setlist{nosep,leftmargin=*}
\newtheorem{theorem}{Theorem}[section]
\newtheorem{lemma}[theorem]{Lemma}
\newtheorem{proposition}[theorem]{Proposition}
\newtheorem{corollary}[theorem]{Corollary}
\theoremstyle{definition}
\newtheorem{definition}[theorem]{Definition}
\newtheorem{remark}[theorem]{Remark}
\newcommand{\zr}{z_{\mathrm{RL}}}
\newcommand{\zs}{z_{\mathrm{SL}}}
\newcommand{\zt}{z_2}
\newcommand{\sos}{\operatorname{sos}}
\newcommand{\BSR}{\operatorname{BSR}}
\newcommand{\RW}{\mathrm{RW3}^{+}}
\newcommand{\SG}{\mathrm{RW3}^{\pm}}

\newcommand{\Q}{\mathbb Q}
\newcommand{\Z}{\mathbb Z}
\newcommand{\fl}[1]{\left\lfloor #1\right\rfloor}

\newcommand{\dd}{\delta}
\title{Asymptotic equivalence and exact values for\\[2pt] second-order Zarankiewicz numbers}
\author{Nikita Lebedev}
\date{October 2026}
\begin{document}
\maketitle
\begin{abstract}
The recursive-line and signed Zarankiewicz numbers maximize the number
of squares in augmentations of a maximum $C_4$-free base, subject to two
sufficient irreducibility criteria. The count includes one square per
base cell and one per selected pair of unused cells. We compare these
parameters with the second-order number, which uses irreducibility itself.
Every maximum $m\times n$ base admits a recursive-line augmentation
with at least $mn/2-C\max(m,n)$ squares, for an absolute constant $C$.
Combining this bound with a two-column extension of known fixed-width
families, we show that all three parameters are asymptotically equivalent,
uniformly as the larger dimension tends to infinity.

For individual displays, a transfer graph shows that once the signed
closure identifies every selected pair, the signed criterion is
equivalent to irreducibility. We determine the second-order number for
every six-column rectangle and give eventual exact formulas for all
three numbers at widths seven, nine and eleven. We also prove signed and recursive-line
equality at $8\times7$ and, together with earlier values, whenever the
shorter side is at most six, except possibly at $14\times4$.
The exact-value results are computer-assisted, using exhaustive
enumeration, checked propositional refutations and symbolic certificates
with a proved lifting argument for arbitrary lengths.
\end{abstract}

\section{Introduction and main results}
An ordinary $C_4$-free matrix can be augmented by pairing unused cells,
each pair representing the square of a binomial in bilinear monomials.
The problem is to maximize the displayed number of squares while ensuring
that no representation by fewer bilinear squares exists. L\"ofberg and Qi
introduced the second-order number $\zt$ and the sufficient recursive
criteria defining $\zr$ and $\zs$ \cite{LQ}. Their hierarchy is
\begin{equation}\label{eq:hierarchy}
 \zr(m,n)\le\zs(m,n)\le\zt(m,n)\le\BSR(m,n).
\end{equation}
The first three parameters require a maximum ordinary singleton base;
$\BSR$ ranges over all SOS biquadratic forms. The author's earlier work
\cite{LebedevAugmented} concerns augmentations under stronger compatibility
conditions. Here we study the signed and recursive-line comparison posed
in \cite{CCQ}, together with its relation to $\zt$.

Three levels must be distinguished: acceptance of an individual display,
the optimum on a prescribed maximum base, and the optimum when the base
may also vary. The criteria can differ at the first two levels even where
the optimized parameters agree. Section~\ref{sec:transfer-graph} explains
the display-level obstruction, and Proposition~\ref{prop:fixed-base-gap}
gives a fixed-base example. Our first result treats every maximum base.
For such a base $F$, write $R(F)$ and $S(F)$ for its largest recursive-line
and signed added-pair counts, with $F$ fixed and the added pairs free.
\begin{theorem}[Uniform half density]\label{thm:uniform-density}
There is an absolute constant $C$ such that every maximum $m\times n$
base $F$ satisfies
\[
 z(m,n)+R(F)\ge\frac{mn}{2}-C\max(m,n).
\]
In particular, uniformly over dimensions and maximum bases as
$\min(m,n)\to\infty$, both $R(F)$ and $S(F)$ equal
$(1/2+o(1))mn$, and
\[
 z_X(m,n)=\bigl(\tfrac12+o(1)\bigr)mn
 \qquad(X\in\{\mathrm{RL},\mathrm{SL},2\}).
\]
For all dimensions, the same constant gives
\[
 S(F)-R(F)\le\frac{z(m,n)}2+C\max(m,n),\qquad
 \zt(m,n)-\zr(m,n)\le\frac{z(m,n)}2+C\max(m,n).
\]
\end{theorem}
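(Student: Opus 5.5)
The plan is to prove the displayed lower bound by an explicit augmentation built from within-line pairs, and then to obtain everything else by elementary counting against the trivial upper bound. Fix a maximum base $F$, so $F$ has exactly $z(m,n)$ cells and there are $mn-z(m,n)$ unused cells. Every selected pair consumes two unused cells, so every added-pair count, recursive-line, signed, or merely irreducible, is at most $(mn-z(m,n))/2$. Since $z(m,n)+R(F)\ge mn/2-C\max(m,n)$ is equivalent to $2R(F)\ge mn-2z(m,n)-2C\max(m,n)$, we may leave up to $z(m,n)+2C\max(m,n)$ unused cells unpaired. This slack of a full copy of $z(m,n)$ plus $O(\max(m,n))$ is what makes a crude construction sufficient.

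\emph{Construction.} Assume by transposition that $m\le n$, or run the argument along the longer side.
\begin{enumerate}
\item In each row, pair the unused cells of that row consecutively in column order, discarding at most one leftover cell per row. This loses at most $m\le\max(m,n)$ cells.
\item Each such pair corresponds to a binomial $x_iy_j\pm x_iy_k$ supported on a single row.
\item Verify the recursive-line criterion of \cite{LQ} by a peeling order. Process the rows one at a time. Within a row, the chain structure of the consecutive pairing, together with the base cells, should let us exhibit at each stage a line in which some current square is forced, for example a column monomial occurring in exactly one remaining square.
\item Remove that square and recurse.
\end{enumerate}

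\emph{Main obstacle.} I expect step 3 to be the hard part. The criterion can get stuck on configurations where every remaining monomial is shared. If that happens, the first fix I would try is to sacrifice $O(1)$ further cells per row, or per column, to break the cycles: drop one pair per row whose removal creates a uniquely covered monomial. A cleaner alternative is to choose the pairing as a forest (a path in each row) so that a leaf always exists. Either option costs only $O(\max(m,n))$ cells and fits inside the slack. The structure of $F$, namely that it is $C_4$-free and maximum, should never be needed beyond knowing its size.

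\emph{Consequences.} The remaining claims follow by counting.
\begin{itemize}
\item By Kővári--Sós--Turán, $z(m,n)=O(n\sqrt m+m)=o(mn)$ uniformly as $\min(m,n)\to\infty$.
\item The lower bound gives $R(F)\ge mn/2-z-C\max(m,n)=(1/2+o(1))mn$, and $R(F)\le S(F)\le(mn-z)/2$. Hence $R(F)$ and $S(F)$ are both $(1/2+o(1))mn$.
\item We have $\zr\ge z+R(F)\ge mn/2-C\max(m,n)$. Also $\zt\le z+(mn-z)/2=(mn+z)/2$, since $\zt$ also uses a maximum base and disjoint pairs. By \eqref{eq:hierarchy}, all three $z_X$ are therefore $(1/2+o(1))mn$.
\item For the difference bounds, subtract $R(F)\ge mn/2-z-C\max(m,n)$ from $S(F)\le(mn-z)/2$. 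This gives $S(F)-R(F)\le z/2+C\max(m,n)$.
\item In the same way, subtract $\zr\ge mn/2-C\max(m,n)$ from $\zt\le(mn+z)/2$. This gives $\zt-\zr\le z/2+C\max(m,n)$, valid for all dimensions.
\end{itemize}
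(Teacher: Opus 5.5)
\textbf{There is a genuine gap, and it is in the construction itself, not in finding a verification order.} Displays built only from same-row pairs are in general reducible, so no $\RW$ certificate can exist for them.

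With every selected pair inside a row, $P_{F,D}=\sum_r x_r^2q_r(y)$, and two rows already produce product relations. Fix rows $r\ne s$ with $d_r,d_s$ singletons and $h_r,h_s$ holes. Let $\mu_r$ be the set of column pairs $\{c,d\}$ with $\{(r,c),(r,d)\}\in D$, and regard $\mu_r\cup\mu_s$ as a multigraph on the $n$ columns.
\begin{itemize}
\item Since $2|\mu_r|=n-d_r-h_r$, this multigraph has at least $(d_r+d_s+h_r+h_s)/2$ components. Hence at least $(d_r+d_s-h_r-h_s)/2$ components $K$ contain no hole of either row.
\item For such $K$, the displayed forms of row $r$ supported in $K$ sum to $x_rL_K$, where $L_K=\sum_{c\in K}y_c$; likewise for row $s$.
\item Two hole-free components $K_1,K_2$ therefore give the nonzero zero-diagonal relation $(x_rL_{K_1})(x_sL_{K_2})-(x_rL_{K_2})(x_sL_{K_1})=0$ among products of distinct classes.
\item Theorem~\ref{thm:final-graph-relations} (the rank step with $I-H/\lambda$) then gives $\sos(P_{F,D})\le N-1$.
\end{itemize}
The simplest instance is two rows sharing two pairs: $(x_r^2+x_s^2)\bigl((y_c+y_d)^2+(y_{c'}+y_{d'})^2\bigr)$ is four displayed squares but a sum of two. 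Consecutive pairing produces exactly this for two degree-one rows with the same singleton column once $n\ge5$.

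In your orientation $m\le n$, a maximum base has average row degree at least about $\sqrt n$, while step~1 leaves $h_r\le1$. So the display is reducible as soon as some two rows have $d_r+d_s\ge6$. Sacrificing $O(1)$ further cells per row does not help. Neither does making each row's pairing a path, because the obstruction lives in the union of two rows, not within one. Separately, step~3 misdescribes $\RW$. It is a closure on Gram coefficient vectors that derives identifications and orthogonalities by line, hole, rectangle, saturation and complementary-pair rules. It is not a peeling of squares by a uniquely occurring monomial.

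Avoiding these relations would require $h_r+h_s\ge d_r+d_s-2$ for every pair of rows, and no hole-free alternating cycles. That means roughly $d_r-1$ holes per row, placed so that for every other row each alternating path from a singleton ends at a hole. Your slack would pay for that many holes, but this global placement problem is exactly the missing content. It depends heavily on the structure of $F$, contrary to your expectation that only $|F|$ matters.

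The paper avoids same-row pairs altogether.
\begin{itemize}
\item Orienting $m\ge n$, it builds column fans $\{(q_t,a),(q_{t+1},b)\}$ along the rows $H_{ab}$ that miss both columns of a matched pair $ab$ (Lemma~\ref{lem:column-fans}).
\item These pairs normalize by a wave. They are orthogonal to everything else by rectangles that shrink index gaps, with $C_4$-freeness supplying hole grounds and an ordering condition handling the common fixed row.
\item The column matching $J$ must have at most one edge inside each fixed row. It comes from Lemma~\ref{lem:block-capacity} below the quadratic scale, and from a random rainbow matching above it using Lemma~\ref{lem:bounded-row-trade}.
\item Degree estimates for maximum bases give $|H_{ab}|\ge3m/5$, so $z(m,n)+R(F)\ge|J|(m-1)\ge mn/2-O(m)$.
\end{itemize}
Your final counting, which derives the density limits and both comparison bounds from the main inequality, is correct and matches the paper's.
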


The construction organizes long column fans by a matching in a linear set
system. Section~\ref{sec:uniform-density} proves the theorem using that
matching lemma and degree estimates for ordinary maximum bases. Along the
odd-prime incidence subsequence, half density is already implicit in
\cite[Corollary~6.4]{LQ}; the theorem covers every growing rectangle and
every prescribed maximum base.

A separate extension treats fixed widths.
Put
\begin{equation}\label{eq:cap}
 U(m,n)=\fl{\frac{mn+z(m,n)}2},\qquad
 U_n(m)=\fl{\frac{(n+1)m+\binom n2}{2}}.
\end{equation}
The expressions agree in the ordinary terminal range
$m\ge\binom n2$ \cite{Culik}.

\begin{theorem}[Fixed-width deficit]\label{thm:asymptotic}
For every fixed $n\ge3$ and $X\in\{\mathrm{RL},\mathrm{SL},2\}$,
\begin{equation}\label{eq:fixedslope}
 z_X(m,n)=\frac{n+1}{2}m+O_n(1)\qquad(m\longrightarrow\infty).
\end{equation}
More precisely, an explicit construction has
$0\le U_n(m)-\zr(m,n)\le n^3/6+O(n^2)$ for all sufficiently large
$m$ at each width.
\end{theorem}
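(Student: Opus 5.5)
The plan is to squeeze $\zr$, $\zs$ and $\zt$ between a universal upper bound and an explicit lower construction. By the hierarchy \eqref{eq:hierarchy}, the upper bound needs to be proved for $\zt$ only, and the lower bound for $\zr$ only.

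\textbf{Upper bound.} An augmentation has one square per base cell and one per selected pair of unused cells. With a maximum base of size $z(m,n)$, the number of squares is at most
\[
z(m,n)+\fl{\frac{mn-z(m,n)}2}=\fl{\frac{mn+z(m,n)}2}=U(m,n).
\]
For fixed $n$ and $m\ge\binom n2$, we are in the terminal range of \cite{Culik}. There $z(m,n)=m+\binom n2$, so $U(m,n)=U_n(m)$. This gives $U_n(m)-\zt(m,n)\ge0$, hence also $U_n(m)-\zr(m,n)\ge0$, and it gives the upper half of \eqref{eq:fixedslope}, since $U_n(m)=\tfrac{n+1}2m+O(n^2)$.

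\textbf{Lower bound.} I would use the ``two-column extension of known fixed-width families'' announced in the abstract. The base case is a width $n_0\in\{3,4,5,6\}$, or a small width, where \cite{LQ,CCQ} supply recursive-line displays. These are eventually periodic in $m$ with deficit $O(1)$ from $U_{n_0}(m)$.

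To pass from width $n$ to width $n+2$, I would append two new columns. In the terminal regime, a maximum base at width $n+2$ is obtained from one at width $n$ by adding cells in the new columns. A standard Čulik-type structure does this: the new columns each meet $O(n)$ new rows, and the rows beyond $\binom{n+2}2$ have weight one. The new unused cells lie in the new columns, and I would pair them so that most pairs lie within a single new row. A pair consisting of the two new cells in a row whose old cells are already resolved is a line that the recursive-line criterion can peel off first. So these pairs should remain acceptable, with the earlier display becoming checkable after they are removed.

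In each step the pairs that cannot be placed come only from the $O(n)$ rows meeting the new base cells and from parity. This costs $O(n^2)$ squares per step. Summing over $n/2$ steps gives a deficit of order $\sum_k k^2/2\approx n^3/6+O(n^2)$, matching the claimed constant. Odd and even widths are handled from their respective base cases.

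\textbf{Main obstacle.} The hard step is verifying that the recursive-line criterion still accepts the display after two columns are added. Lines must be removable in some order, and the new pairs must not create a cycle with the old ones. I would first try to order the new-column pairs before all old pairs, arguing that each has a private line (a row containing no other selected pair through its cells). If that fails in the $O(n)$ rows meeting the new base cells, I would simply leave those cells unpaired; this is already accounted for in the $O(n^2)$ per-step loss. Finally, $\zr\le\zs\le\zt\le U_n(m)$ transfers the $O_n(1)$ deficit to all three parameters, which proves \eqref{eq:fixedslope}.
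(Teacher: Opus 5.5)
Your upper bound is correct and is the paper's: the cell cap plus the hierarchy. The lower bound has a genuine gap, because the horizontal pairs $\{(k,A),(k,B)\}$ you propose for the new columns can be used in at most a bounded number of rows.

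Put $w=y_A+y_B$. If two rows $k,l$ carrying such pairs both have singletons in an old column $c$, then
\[
 (x_ky_c)^2+(x_ly_c)^2+(x_ky_A+x_ky_B)^2+(x_ly_A+x_ly_B)^2
 =(x_ky_c+x_lw)^2+(x_ly_c-x_kw)^2 .
\]
By complete-square heredity the whole display is then reducible. In your scheme the old rows keep their old singletons, and every row of a terminal maximum base contains one. So the rows carrying horizontal pairs need pairwise disjoint old singleton supports, which allows at most $n$ such rows. Leaving the new cells of all other rows unpaired, as your fallback proposes, creates order $m$ holes, hence a deficit of order $m$ rather than $O_n(1)$. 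The obstruction is not confined to the $O(n)$ rows meeting new base cells.

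Your verification plan also aims at the wrong target. $\RW$ is not an order for peeling off lines. It requires every selected pair to be identified, and every two distinct classes, new against old included, to be certified orthogonal in every Gram representation. The identity above is exactly a failure of new/old orthogonality, which your sketch never examines.

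The missing idea is the staircase chain of Theorem~\ref{thm:extension}: select $Q_k=\{(k-1,A),(k,B)\}$ for $1\le k\le m-1$, leaving only $(0,B)$ and $(m-1,A)$ empty.
\begin{itemize}
\item The chain normalizes as a wave starting from the hole $(0,B)$, and distinct $Q_k$ share a column.
\item For every old coefficient $X_{l,c}$, even an unresolved half, the rectangles through columns $B,c$ and $A,c$ carry $\langle Q_k,X_{l,c}\rangle$ to $\langle Q_{k+1},X_{l-1,c}\rangle$ until a row ground or a hole is reached.
\item Hence every feasible Gram matrix splits as an old block plus an identity block. The length rises by exactly $m-1$, and irreducibility, $\RW$ and $\SG$ transfer in both directions for an arbitrary input display.
\end{itemize}
This is what lets the known width-three and width-four families be extended with no further certificate.

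Maximality of the base is restored by $2n+1$ fresh two-singleton rows, one per new column pair. With the two chain endpoints, this gives $2w^2+w+2$ new holes when passing from width $w$ to $w+2$. Since $m-\binom n2$ is invariant under the step, every sufficiently large $m$ is reached. Summing gives $n^3/3+O(n^2)$ holes and, by \eqref{eq:cellidentity}, the deficit $n^3/6+O(n^2)$. Your $\sum k^2/2$ heuristic agrees with this constant only numerically, without a construction that realizes it.
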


The cell cap gives the leading upper bound; the extension supplies the
matching lower bound. The constant here depends on $n$. This fixed-width
statement is distinct from Theorem~\ref{thm:uniform-density}, where both
dimensions grow and the estimate is uniform.

\begin{corollary}[Unrestricted asymptotic comparison]\label{cor:unrestricted-asymptotic}
For positive integers $m,n$, uniformly as $\max(m,n)\to\infty$,
\[
 \frac{\zs(m,n)}{\zr(m,n)}\longrightarrow1,
 \qquad \frac{\zt(m,n)}{\zr(m,n)}\longrightarrow1.
\]
\end{corollary}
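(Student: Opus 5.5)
The ratio bounds come from a sandwich. By \eqref{eq:hierarchy}, $1\le \zs/\zr\le \zt/\zr$, so it suffices to show $\zt(m,n)/\zr(m,n)\to1$ uniformly as $\max(m,n)\to\infty$. The parameters are symmetric in $m,n$, since transposing a display preserves all three criteria. So we may assume $m\ge n$ and let $m\to\infty$.

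We need an upper bound on $\zt$ and a matching lower bound on $\zr$. For the upper bound we use the cell cap $\zt(m,n)\le U(m,n)$. Each base cell gives one square, and each added pair uses two of the $mn-z(m,n)$ unused cells. Hence the count is at most $z+\lfloor (mn-z)/2\rfloor=U(m,n)$. For the lower bound we argue by contradiction along a sequence $(m_k,n_k)$ with $m_k\ge n_k$, $m_k\to\infty$, and ratio bounded away from $1$. Passing to a subsequence, either $n_k\to\infty$ or $n_k$ is eventually a fixed constant $n$. The two cases use the two theorems.

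\emph{Case $n_k\to\infty$.} Here $\min(m_k,n_k)\to\infty$, and Theorem~\ref{thm:uniform-density} gives $\zr=(1/2+o(1))mn$. Since $\zt\le U(m,n)\le (mn+z)/2$, it remains to show $z(m,n)=o(mn)$. This follows from the K\H{o}v\'ari--S\'os--Tur\'an bound $z(m,n)\le n\sqrt m+m$. With $m\ge n$ we get $z/(mn)\le 1/\sqrt m+1/n\to0$. Alternatively, the second displayed inequality of the theorem, $\zt-\zr\le z/2+C\max(m,n)$, gives the same conclusion after dividing by $\zr\ge mn/2-Cm$. The error term is $O(m)/(mn)=O(1/n)\to0$.

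\emph{Case $n_k=n$ fixed.} For $n\ge3$, Theorem~\ref{thm:asymptotic} gives $\zt$ and $\zr$ both equal to $\tfrac{n+1}2m+O_n(1)$, so their ratio tends to $1$. Only finitely many widths occur in the subsequence, so the $O_n(1)$ constants do not break the argument. For $n\in\{1,2\}$ a direct argument is needed. For $n=1$ the ordinary base is the full column, there are no unused cells, and all three numbers equal $m$. For $n=2$ the base has $m+1$ cells and $m-1$ unused cells, so $\zt\le m+1+\lfloor (m-1)/2\rfloor$. For the lower bound $\zr\ge z(m,2)=m+1$ already gives a bounded ratio, but not ratio $1$. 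We therefore need that recursive-line augmentations achieve $\tfrac32m-O(1)$ at width $2$. This should follow by pairing the unused cells in consecutive rows, as in the fixed-width families.

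The main obstacle is the width-$2$ case, and more generally the small widths not covered by Theorem~\ref{thm:asymptotic}. We would first check whether the fixed-width families cited before Theorem~\ref{thm:asymptotic} include width $2$. Failing that, we would verify directly that the consecutive-row pairing satisfies the recursive-line criterion. All other steps only combine the stated theorems with the cap $U$ and the K\H{o}v\'ari--S\'os--Tur\'an bound.
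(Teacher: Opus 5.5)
Your decomposition matches the paper's. A counterexample sequence has a subsequence with $\min(m,n)\to\infty$, which Theorem~\ref{thm:uniform-density} and the cell cap handle (your argument there is fine). Otherwise it has a subsequence with a fixed shorter side, which Theorem~\ref{thm:asymptotic} handles for widths at least three. The genuine gap is width two, and the route you propose there cannot succeed. You aim to show $\zr(m,2)\ge\tfrac32m-O(1)$, but in fact $\zt(m,2)=z(m,2)=m+1$. At this width the cell cap $U(m,2)$ is far from attained, and no selected pair survives on a maximum base (compare $\zt(2,6)=7=z(2,6)$ in Theorem~\ref{thm:six}). In particular, your consecutive-row pairings are reducible.

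What is missing is an upper bound, not a construction. By Lemma~\ref{lem:cap}, a maximum $m\times2$ base has one full row $z$ and all other rows singletons, so each other row contains exactly one unused cell.
\begin{itemize}
\item Suppose a selected pair uses two such cells in different columns. Then both companion cells are fixed, and together with row $z$ they form the strip \eqref{eq:strip5}. This is failure of condition~(a) in Theorem~\ref{thm:unary}, and \eqref{eq:sqrtthree} shortens it.
\item Suppose the two cells lie in one column $c$. Then both rows are fixed in the other column $e$. Together with the fixed cells $(z,e)$ and $(z,c)$ they form a degenerate fork, so condition~(b) fails and \eqref{eq:fork5} shortens it.
\end{itemize}
By Corollary~\ref{cor:unaryfilter}, no irreducible limited display contains a pair. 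Hence $\zt(m,2)\le m+1=z(m,2)\le\zr(m,2)$, and both ratios are identically $1$ at width two. The paper reaches the same conclusion by citing the known two-column equalities of \cite{LQ}. With this upper bound in place of the cell cap, together with the trivial lower bound $\zr\ge z$ you already noted, your proof is complete.
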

\begin{proof}
A counterexample sequence either has a subsequence with
$\min(m,n)\to\infty$, contradicting Theorem~\ref{thm:uniform-density},
or has a subsequence with a fixed shorter side. The latter contradicts
Theorem~\ref{thm:asymptotic}, after transposition; widths one and two
have the known exact equalities \cite{LQ}.
\end{proof}

\Needspace{5\baselineskip}

The ratio limit is a leading-order statement, not exact equality. The
extension in Section~\ref{sec:extension} allows arbitrary input displays
and proves the fixed-width theorem without the specialized constructions
used later in the paper.

For individual displays, Theorem~\ref{thm:unary} proves that both criteria
are exact for one added pair on an inclusion-maximal base.
Theorems~\ref{thm:final-graph-relations} and
\ref{thm:orthogonality-completeness} characterize the opposite end of the
closure: once the signed closure identifies all selected pairs, signed acceptance is
equivalent to irreducibility. The identification hypothesis cannot be
dropped.

Our finite and constructive results give exact equality in the following
ranges.
\begin{theorem}[Complete six-column values]\label{thm:six}
For every integer $m\ge2$,
\begin{equation}\label{eq:six}
 \zr(m,6)=\zs(m,6)=\zt(m,6)=
 \begin{cases}
 7,&m=2,\\
 4m,&3\le m\le7,\\
 \fl{(15m+15)/4},&8\le m\le14,\\
 \fl{(7m+15)/2},&m\ge15.
 \end{cases}
\end{equation}
\end{theorem}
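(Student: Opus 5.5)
By the hierarchy \eqref{eq:hierarchy} it suffices to prove two inequalities for every $m\ge2$. The first is a lower bound $\zr(m,6)\ge f(m)$, where $f$ is the right-hand side of \eqref{eq:six}. The second is an upper bound $\zt(m,6)\le f(m)$. The range of $m$ splits into three regimes.

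\emph{Small lengths, $2\le m\le 14$ (say up to some moderate threshold $m_0$).} Here I would argue by finite computation. First, enumerate the maximum ordinary singleton bases of size $m\times 6$ up to row and column permutation; the values $z(m,6)$ are classical. For each base, search over sets of added pairs of unused cells. For the lower bound, exhibit one display per $m$ whose augmentation passes the recursive-line criterion, and record it as a certificate. For the upper bound, show that no display with $f(m)+1$ squares is irreducible. I would encode irreducibility of a given display, meaning that no representation by fewer bilinear squares exists, through the necessary conditions on pair interactions from \cite{LQ}. The resulting satisfiability instances would then be refuted with checked propositional proofs (e.g.\ DRAT). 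The case $m=2$ gives $7=z(2,6)+1$ and should follow directly from the known width-two exact values after transposition \cite{LQ}.

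\emph{Terminal range, $m\ge15=\binom62$.} Here $U_6(m)=\lfloor(7m+15)/2\rfloor$ coincides with $U(m,6)$ by \cite{Culik}, and the cell cap gives $\zt\le U(m,6)$. So the upper bound is free, and the whole task is a construction attaining the cap exactly. Theorem~\ref{thm:asymptotic} only gives a deficit of order $n^3/6$, so it is not enough. Instead, I would build explicit periodic families. Start from a terminal base: the $\binom62$ rows that are pairs of columns, followed by singleton rows. Pair up unused cells inside blocks of consecutive singleton rows, so that each block adds a fixed number of pairs and the counts reach the cap $\lfloor(mn+z)/2\rfloor$ with the parity handled by the residue of $m$ modulo $2$ (or modulo a small period). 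Acceptance for arbitrary $m$ would be proved by a lifting lemma. The recursive-line elimination of an appended block depends only on its local interaction with a fixed core. It therefore reduces to a symbolic certificate checked once per residue class, and then extends to all lengths by induction on the number of blocks.

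\emph{Middle range, $8\le m\le14$.} Here the claimed value $\lfloor(15m+15)/4\rfloor$ is strictly below the cap, so the upper bound is the main obstacle. The cell count alone does not suffice, and I would expect the exhaustive refutations to be expensive. To make them feasible, I would first apply symmetry breaking over the base's automorphism group. Next I would use a counting lemma: each base row of weight $w$ leaves $6-w$ unused cells, and some structural constraint on irreducible pairings, presumably that pairs sharing rows must avoid forming reducible sub-displays, limits the pairs to about $3/4$ of the unused cells in this density regime. Finally, the residual cases would go to a SAT solver. For the lower bound in this range, explicit displays found by search would suffice, again certified under the recursive-line criterion.
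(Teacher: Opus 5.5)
\textbf{The middle range is misdiagnosed.} The step you call the main obstacle does not exist. For $6\le m\le14$, Lemma~\ref{lem:ordinarysix} gives $z(m,6)=\fl{(3m+15)/2}$. In both parities the cell cap is then
$U(m,6)=\fl{(6m+z(m,6))/2}=\fl{(15m+15)/4}$.
So for $8\le m\le14$ the claimed value \emph{is} the cap, and the upper bound is just Lemma~\ref{lem:cap}. You compared against $U_6(m)=\fl{(7m+15)/2}$, but that formula presupposes the terminal base size $z(m,6)=m+15$, which holds only for $m\ge15$.

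No SAT search or symmetry breaking is needed in this range. Its entire content is a lower bound: recursive-line displays on a maximum base that attain the cap. The paper takes these from \cite[Corollary~4.2]{CCQ}. Your proposed ``counting lemma'' cannot hold in any useful form. By \eqref{eq:cellidentity}, a display of length $U(m,6)$ has at most one hole, so the existing cap-attaining displays pair all but at most one unused cell.

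\textbf{Where the upper bounds are actually needed.} The value lies strictly below the cap exactly for $2\le m\le7$: the caps are $9,13,18,22,26,30$ against values $7,12,16,20,24,28$. This is where actual-SOS upper bounds are required. For $m\le4$ they are prior results of \cite{LQ} after transposition. For $m=5,6,7$ they are Theorem~\ref{thm:small}.

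Your encoding ``through the necessary conditions on pair interactions from \cite{LQ}'' does not yet bound $\zt$. The criteria of \cite{LQ} are sufficient conditions, and a failed recursive or signed certificate proves nothing about reducibility. Every pruning clause must therefore name a complete subfamily with a certified shorter real SOS, transported by Lemma~\ref{lem:heredity}. Such certificates are identities like \eqref{eq:fork6}, \eqref{eq:sqrtthree} and \eqref{eq:explicitcore}, or a nonzero zero-diagonal product relation as in Theorem~\ref{thm:final-graph-relations}. At $7\times6$ the paper also imports the smaller cases by deleting degree-two rows, so that the retained base stays maximum. It uses heredity to pass from excluding the target $f(m)+1$ to excluding the cap $f(m)+2$. (Also, $z(2,6)=7$, so the value $7$ at $m=2$ means no pair can be added at all.)

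\textbf{The terminal range.} For $m\ge15$ your outline (free cap, periodic family, symbolic lifting) is the paper's strategy, but you give no family. The paper's family starts from the $15\times6$ witness of \cite[Figure~4]{CCQ}. It threads four chains through four-row blocks with same-column bridges and handles the four residues modulo $4$ by separate attachments. Its orthogonality is not local to a block, contrary to your assumption. Products between distant blocks need gap-reducing rectangle transfers, and two exceptional components close only through the recurrence \eqref{eq:sixexception}, seeded at the boundary.
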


At $m=1$ all six cells are fixed, so the common value is $6$.
The new actual upper bounds at $5\times6$, $6\times6$ and $7\times6$
are $20,24,28$. The recursive values and witnesses at the latter two sizes
are already in \cite{CCQ}. At $8\le m\le16$, the constructions of
\cite{CCQ} attain the cell cap. Section~\ref{sec:sixconstruction} supplies
the all-length continuation, starting with their Figure~4 witness.
Theorem~\ref{thm:seven-square} also gives
$\zr(7,7)=\zs(7,7)=\zt(7,7)=33$.

Theorem~\ref{thm:signed-exclusions} proves
\[
 \zr(11,5)=\zs(11,5)=37,\qquad \zr(8,7)=\zs(8,7)=38.
\]
These are signed upper bounds: they leave $\zt(11,5)\in[37,38]$ and
$\zt(8,7)\in[38,39]$. The $11\times5$ equality, combined with the earlier finite values and the
following constructions, implies $\zr=\zs$ whenever the shorter side
is at most six, except possibly at $14\times4$ and its transpose
(Corollary~\ref{cor:five-and-six}).

\begin{theorem}[Odd-width exact formulas]\label{thm:otherwidths}
All three parameters have the following values; the width-five formula
is established in \cite[Theorem~5.4 and Corollary~5.5]{XiChangFive}.
\[
\begin{array}{c|c|c}
 n&\text{row range}&\zr(m,n)=\zs(m,n)=\zt(m,n)\\\hline
 5&m\ge12&3m+5\\
 7&m\ge19&4m+10\\
 9&m\ge36&5m+18\\
 11&m\ge53&6m+27.
\end{array}
\]
The thresholds are least for all three parameters at widths seven, nine
and eleven. At width five, twelve is least for $\zr$ and $\zs$; whether
$\zt(11,5)=38$ remains open.
\end{theorem}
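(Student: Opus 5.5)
The plan is to prove each row of the table by matching an upper bound for $\zt$ with a lower-bound construction for $\zr$, and then use the hierarchy \eqref{eq:hierarchy} to squeeze $\zr\le\zs\le\zt$ together. Width five is quoted from \cite{XiChangFive}, so the work is at widths $n=7,9,11$.

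For the upper bound I would use the cell cap $U(m,n)=\fl{(mn+z(m,n))/2}$. It holds for $\zt$ because every cell is either a fixed base cell or one of the two cells of an added pair. In the ordinary terminal range $m\ge\binom n2$ (which covers $m\ge 36$ at width nine and $m\ge 53$ at width eleven), $U$ equals $U_n(m)=\fl{((n+1)m+\binom n2)/2}$. This gives $4m+10$, $5m+18$ and $6m+27$ up to rounding. Where the cap exceeds the claimed value by one because of parity, and at width seven for $19\le m<21$ where the cap is not the binding constraint, I would need a sharper certificate. For these cases I would use the symbolic certificates with the lifting argument: a finite SAT/exhaustive refutation at a base length, lifted to all larger $m$ by showing that a hypothetical display beating the formula restricts, after deleting a suitable block of rows, to a smaller counterexample.

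For the lower bound I would use the two-column extension of Section~\ref{sec:extension}. I would take explicit recursive-line witnesses at width $n-2$, or directly at width $n$ for a base range of lengths, and give a periodic row-block insertion that adds $(n+1)/2\cdot 2$ squares per two rows while preserving the recursive-line criterion. This yields $\zr(m,n)\ge 4m+10$ etc.\ for all $m$ at or beyond the threshold. The verification that inserted blocks keep the display recursive-line reducible-free would go by the same induction the extension uses.

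For minimality of the thresholds, I would exhibit, at $m=18$ (width seven), $35$ and $52$, a computed value of $\zr$ and $\zt$ differing from the formula, again by enumeration and checked refutations.

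I expect the main obstacle to be the upper bound in the residual non-terminal or parity cases, especially width seven at $19\le m\le 20$, where neither the cell cap nor the terminal ordinary number suffices directly. There the computer certificate together with a correct lifting lemma for all $m$ carries the proof.
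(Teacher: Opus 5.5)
\textbf{There is a genuine gap, and it is in the lower bound, which is where essentially all of the work lies.}

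The two-column extension of Theorem~\ref{thm:extension} cannot give exact values. Every application adds $2n^2+n+2$ holes, whereas by \eqref{eq:cellidentity} a display on a maximum base reaches the cap $U(m,n)$ only if it has at most one hole. That extension only yields the $O_n(1)$ deficit of Theorem~\ref{thm:asymptotic}.

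Your fallback, a ``periodic row-block insertion'' at width $n$, is the right kind of object. However, you neither specify it nor give a valid way to certify it. The extension's induction uses two fresh columns disjoint from the old display, and it does not by itself transfer to rows inserted into a fixed-width core.

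The paper's construction is a full one-row module. Each new row has one singleton and, for each of $(n-1)/2$ tracks with fixed outgoing/incoming columns $(a_i,b_i)$, the $a_i$-cell of $Q_i(k+1)$ and the $b_i$-cell of $Q_i(k)$. Attached to explicit cores with $25$, $36$ and $55$ rows, this keeps $|F|=m+\binom n2$ and $H=1,0,1$.

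Module/module orthogonality then follows from Lemma~\ref{lem:parallel}. Normalization and all core/module products, however, need staged symbolic certificates lifted by Lemma~\ref{lem:certificate-lifting}. These include exceptional components that close only through recurrences such as \eqref{eq:elevenrecurrence}. The paper also needs finite certificates for the short lengths and for the bridge rows $19$--$24$ at width seven and $53,54$ at width eleven. Without a concrete core and module of this kind, $\zr(m,n)\ge U(m,n)$ is unproved.

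\textbf{Your upper-bound diagnosis is also off, and it hides where the real content is.} The cell cap is exact throughout the stated ranges.
\begin{itemize}
\item For odd $n$, $U_n(m)$ equals $4m+10$, $5m+18$ and $6m+27$ with no rounding loss.
\item At width seven, $m=19,20$: the bound $2|F|-3m\le21$ and the strict subterminal bound of Lemma~\ref{lem:cap} give $z\le39,40$, so the cap is exactly $4m+10$. The cap is binding there, contrary to your claim.
\item At width eleven, $\binom{11}2=55$, so $m=53,54$ are not terminal as you state. Still, $2|F|-3m\le55$ gives $z\le107,108$, and again the cap is $6m+27$.
\end{itemize}
No SAT refutation or row-deletion lifting is needed. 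The paper's lifting lemma certifies $\RW$ acceptance of constructions, not upper bounds.

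Leastness at widths seven, nine and eleven likewise follows from the cap alone: $\zt(18,7)\le81$, $\zt(35,9)\le192$ and $\zt(52,11)\le338$. No enumeration is required.

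At width five the situation is different: $U(11,5)=38=3\cdot11+5$, so the cap cannot show that twelve is least. That claim is not part of the quoted formula. It rests on the computer-assisted signed exclusion $\zs(11,5)\le37$ of Theorem~\ref{thm:signed-exclusions}, which your proposal omits.
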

Section~\ref{sec:otherwidths} presents a common one-row construction
for widths seven, nine and eleven.
Chen, L\"ofberg and Qi \cite[Theorem~5.3]{CLQIncidence} determine all
three parameters on the incidence slice $m=\binom n2$ for the orders
listed there, including $\zt(21,7)=94$ and $\zt(36,9)=198$.
These agree with Theorem~\ref{thm:otherwidths}, which determines the
parameters for all $m\ge19$ at width seven and all $m\ge36$ at width nine.
At width eleven it gives $\zr(55,11)=\zs(55,11)=\zt(55,11)=357$,
confirming the $n=11$ value in \cite[Conjecture~4.10]{CLQIncidence}
by a different witness.

The uniform density and structural results have written proofs. The
fixed-width argument starts from the previously established width-three
and width-four families. Our exact-value results are computer-assisted:
the finite exclusions use checked refutations or independent exhaustive
enumerations, and the infinite exact families use symbolic certificate
tables together with the lifting argument in
Lemma~\ref{lem:certificate-lifting}. Appendix~\ref{app:data} identifies
the data and verification methods.
\section{Configurations, coefficient rules and ordinary bounds}
\label{sec:prelim}

Write $u_{rc}=x_ry_c$ for a cell of an $m\times n$ rectangle.
A configuration consists of a singleton set $F$ and a matching $D$ of
unordered pairs of distinct cells outside $F$. All selected supports are
disjoint. Its displayed polynomial and length are
\begin{equation}\label{eq:display}
 P_{F,D}=\sum_{a\in F}u_a^2+
            \sum_{\{a,b\}\in D}(u_a+u_b)^2,
 \qquad N=|F|+|D|.
\end{equation}
We call the display irreducible when its minimum real bilinear SOS
length $\sos(P_{F,D})$ equals $N$. A \emph{limited} configuration has a
$C_4$-free singleton set of size $z(m,n)$. The definitions of $\zt$,
$\zr$ and $\zs$ are those of \cite[Definitions~3.1, 4.11, 7.3]{LQ}.
In particular, $\zr$ is exactly the parameter of
\cite[Definition~4.11]{LQ}: its closure $\RW$ already includes the
complementary-pair rule of \cite[Definition~4.5]{LQ}. Hole grounds
are the unoccupied-companion case of the rectangle-transfer rule in
\cite[Definition~4.1]{LQ}, also used in \cite[Lemma~4.8]{LQ}.
Transport along identified classes and simultaneous complementary
identifications are rules of the same closure. Throughout, a maximum base
has maximum cardinality $z(m,n)$; it is therefore also inclusion-maximal.

In any Gram representation of \eqref{eq:display}, let $v_a$ be the
coefficient vector at cell $a$. An occupied cell has $\|v_a\|=1$, and a
hole has $v_a=0$. Put $\dd_{ab}=1$ when $\{a,b\}\in D$ and zero
otherwise. Distinct cells on one row or column satisfy
$\langle v_a,v_b\rangle=\dd_{ab}$. For the two diagonals $\{a,b\}$ and
$\{c,d\}$ of a genuine rectangle,
\begin{equation}\label{eq:rectangle}
 \langle v_a,v_b\rangle+\langle v_c,v_d\rangle
       =\dd_{ab}+\dd_{cd}.
\end{equation}
A deviation means $\langle v_a,v_b\rangle-\dd_{ab}$.
Accordingly, zero deviations on either diagonal transfer to the other.
If a rectangle has a hole, its diagonal through that hole has zero
inner product and zero prescription, so both deviations vanish. This
hole ground follows directly from the coefficient equations.
If $\dd_{ab}=1$, its zero deviation identifies the two unit vectors.
Two selected complementary diagonals are both identified: their
deviations sum to zero and each is nonpositive by Cauchy--Schwarz.
These are the line, rectangle, saturation and complementary-pair rules
of $\RW$. A certificate is complete if all selected pairs are identified
and the $N$ distinct selected classes are pairwise orthogonal. It then
forces rank at least $N$ in every Gram representation.

The signed rule $\SG$ additionally considers the graph of class-pair
deviations. Rectangle edges reverse signs. A component is forced to zero
if it has a known zero, an odd sign cycle, or unresolved selected-pair
inequalities in both parity classes. One then contracts newly identified
pairs and repeats. We use its established soundness, not a converse.
In particular, failure of either criterion is not itself reducibility.

\begin{lemma}[Ordinary terminal bases and the cell cap]\label{lem:cap}
For $m\ge\binom n2$,
\begin{equation}\label{eq:ordinaryterminal}
 z(m,n)=m+\binom n2.
\end{equation}
Every extremal base in this range has one degree-two row for each column
pair and otherwise degree-one rows. If $m<\binom n2$, then
$z(m,n)\le m+\binom n2-1$. For every limited simple display,
$N\le U(m,n)$.
\end{lemma}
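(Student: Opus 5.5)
The lemma has three parts, and I would prove them in order: the terminal value with its extremal structure, the strict bound below the terminal range, and the cell cap. Throughout I work with row degrees. For a $C_4$-free set $F$ in an $m\times n$ rectangle, let $d_r$ be the number of cells of $F$ in row $r$. The $C_4$-free condition says exactly that distinct rows share at most one column pair. Hence
\[
 \sum_r \binom{d_r}{2}\le\binom n2 .
\]

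For the upper bound, write $|F|=\sum_r d_r$. Since $d\le 1+\binom d2$ for every integer $d\ge0$, with equality exactly when $d\in\{1,2\}$, we get
\[
 |F|\le m+\sum_r\binom{d_r}2\le m+\binom n2 .
\]
For the lower bound when $m\ge\binom n2$, take one degree-two row for each of the $\binom n2$ column pairs and put one cell in every remaining row; this set is $C_4$-free. Together these give \eqref{eq:ordinaryterminal}, which is also the classical result of \cite{Culik}.

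The extremal structure comes from tracing equality in the chain. Equality forces every $d_r\in\{1,2\}$ and $\sum_r\binom{d_r}2=\binom n2$. So the degree-two rows cover each column pair, and by $C_4$-freeness each pair is covered exactly once; all other rows have degree one.

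If $m<\binom n2$, equality cannot hold, because at most $m<\binom n2$ rows have degree two and so $\sum_r\binom{d_r}2\le m<\binom n2$ in the equality case. Since $|F|$ is an integer, $z(m,n)\le m+\binom n2-1$.

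For the cell cap, a limited display has $|F|=z(m,n)$, and its pairs in $D$ use $2|D|$ further distinct cells outside $F$. Therefore $z+2|D|\le mn$, and
\[
 N=z+|D|\le z+\frac{mn-z}2=\frac{mn+z}2 .
\]
Integrality of $N$ gives $N\le U(m,n)$.

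I expect no real obstacle. The step needing most care is the equality analysis. The inequality $d\le1+\binom d2$ also holds for $d=0$, where it is strict, so equality does exclude empty rows. I would also check that the coverage argument needs only that no column pair is shared by two rows.
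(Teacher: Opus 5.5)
Your proof is correct and follows the paper's argument essentially line by line: the pair-counting chain $|F|-m\le\sum_r\binom{d_r}{2}\le\binom n2$ with equality forcing every $d_r\in\{1,2\}$ and each column pair to occur once, the same equality argument for $m<\binom n2$, and the cell count $|F|+2|D|+H=mn$ for the cap. One wording slip: $C_4$-freeness means that distinct rows share at most one \emph{column}, so no column pair lies in two rows, not that they share ``at most one column pair''; the inequality you derive already uses the correct form.
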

\begin{proof}
If $d_r$ are the singleton row degrees, then
\[
 |F|-m=\sum_r(d_r-1)\le\sum_r\binom{d_r}{2}\le\binom n2.
\]
Equality requires each $d_r$ to be one or two and every column pair to
occur once. The indicated incidence rows and additional singleton rows
attain the bound. If $m<\binom n2$, equality in the same upper
bound would require $\binom n2$ degree-two rows, which is impossible.
This is the classical terminal formula
\cite{Culik}; the equality description is also recorded in
\cite[Lemma~4.6]{XY}. If $H$ is the number of holes, simplicity gives
\begin{equation}\label{eq:cellidentity}
 |F|+2|D|+H=mn,
 \qquad N=\frac{mn+|F|-H}{2}.
\end{equation}
This proves the cap, as in \cite[Proposition~3.2]{LQ}.
\end{proof}

\begin{lemma}[Ordinary six-column values]\label{lem:ordinarysix}
For $6\le m\le14$, the ordinary value is
$z(m,6)=\fl{(3m+15)/2}$.
\end{lemma}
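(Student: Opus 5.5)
The claim is that $z(m,6)=\fl{(3m+15)/2}$ for $6\le m\le14$. The plan has two parts: an upper bound from a refined degree count, and matching constructions.

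\emph{Upper bound.} As in the proof of Lemma~\ref{lem:cap}, let $d_r$ be the row degrees of a $C_4$-free singleton set $F$ in the $m\times 6$ rectangle. Since $F$ is $C_4$-free, the $m$ rows use pairwise disjoint sets of column pairs, so $\sum_r\binom{d_r}{2}\le\binom62=15$. To maximize $|F|=\sum_r d_r$ under this constraint with $m$ rows, we want each row's degree increments to cost as few pairs as possible:
\begin{itemize}
\item raising a row from degree $1$ to $2$ gains one cell and costs one pair;
\item raising it from $2$ to $3$ gains one cell and costs two pairs.
\end{itemize}
With only $m\le14<15$ rows, the cheap increments run out after $m$ of them. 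The remaining $15-m$ pairs then buy at most $\fl{(15-m)/2}$ further cells at cost two each; degree $4$ and above cost more per cell. This gives
\[
 |F|\le m+m+\fl{(15-m)/2}=\fl{(3m+15)/2}.
\]
To make this rigorous, I would apply convexity of $\binom d2$: $\binom d2\ge 2d-3$ for all $d\ge1$, with equality at $d=2,3$. Also $\binom d2\ge d-1$. Writing $|F|=\sum_r d_r$, I combine the bounds with $\sum_r\binom{d_r}2\ge\sum_r(2d_r-3)=2|F|-3m$. This yields $2|F|-3m\le15$, hence $|F|\le\fl{(3m+15)/2}$ for every $m$. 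The restriction $6\le m\le14$ matters only for attainment.

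\emph{Lower bound.} Equality in $\binom d2\ge 2d-3$ forces all degrees to lie in $\{2,3\}$. So we need a linear space on the $6$ columns (lines pairwise meeting in at most one point) consisting of $m$ lines of sizes $2$ and $3$ covering all $15$ pairs, or all but one pair when $m$ is odd. Let $t$ be the number of triples. Then $3t+(m-t)\le15$ with $t=\fl{(15-m)/2}$, so $t$ ranges from $4$ (at $m=6,7$) down to $0$ (at $m=14$), and the remaining rows are pairs not covered by the triples.

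The main obstacle is existence of $t$ pairwise edge-disjoint triangles in $K_6$. At most $4$ are possible, since each vertex has degree $5$ and so lies in at most $2$ of them. The key example is the Pasch-type configuration $\{123,145,246,356\}$, which attains $t=4$. Deleting triples from it handles $t=3,2,1,0$, and in each case we add the uncovered pairs as degree-two rows, using as many as needed to reach $m$ rows. Counting rows gives $t+(15-3t)=15-2t$ rows when every pair is covered. For smaller $m$ we leave pairs unused, and for $m=6$ or $7$ with $t=4$ we take $4$ triples plus $2$ or $3$ of the $3$ leftover pairs. I would then check that the totals equal $\fl{(3m+15)/2}$ across $6\le m\le14$, which is routine arithmetic. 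If needed, the result can be cross-checked against the classical tables of \cite{Culik}.
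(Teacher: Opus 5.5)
Your proposal is correct and is essentially the paper's proof. You use the same upper bound $2|F|-3m\le\sum_r\binom{d_r}2\le15$ from $\binom d2\ge2d-3$, and the same construction: $t=\fl{(15-m)/2}$ of the four edge-disjoint triangles $\{123,145,246,356\}$ (the paper's $012,034,135,245$ shifted by one), plus $m-t$ rows on unused column pairs, which exist because $m+2t\le15$.

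One small slip in your motivating remark: the parity is reversed. The bound has slack, and one column pair stays uncovered, when $m$ is even (e.g.\ $m=6$ uses $14$ pairs); for odd $m$ all $15$ pairs are used. This does not affect your explicit construction or its count $2m+t=\fl{(3m+15)/2}$.
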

\begin{proof}
Pair counting and $\binom d2\ge2d-3$ give the upper bound.
For the lower bound put $t=\fl{(15-m)/2}$ and take the first $t$
triples in $012,034,135,245$ as singleton row supports, with
zero-based column indices. Their column pairs are disjoint. Add
$m-t$ unused column-pair rows; there are enough since
$m+2t\le15$. This $C_4$-free support has
$3t+2(m-t)=\fl{(3m+15)/2}$ cells.
\end{proof}

\begin{lemma}[Complete-square heredity]\label{lem:heredity}
Every subfamily of the displayed squares of an irreducible SOS is
irreducible. Deleting complete squares (singletons or pairs) also preserves $\RW$
admissibility on the retained coefficient system. Deleting selected pairs,
whose cells become holes, preserves $\SG$ admissibility.
\end{lemma}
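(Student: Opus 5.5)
The lemma has three claims, and I will treat each separately, working directly with the Gram-coefficient description of Section~\ref{sec:prelim}.

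\emph{Irreducibility of a subfamily.} I will argue by contraposition. Split the display as $P_{F,D}=P_1+P_2$, where $P_1$ collects the retained squares (length $N_1$) and $P_2$ the deleted ones (length $N_2$). Suppose $P_1$ were reducible, say $\sos(P_1)<N_1$. Then adding the $N_2$ deleted squares unchanged gives a bilinear SOS representation of $P_{F,D}$ of length less than $N_1+N_2=N$. This contradicts irreducibility. The point is that reducibility of any part propagates to the whole, because sums of SOS representations are SOS representations. No Gram analysis is needed here.

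\emph{$\RW$ admissibility under deletion.} Deleting a complete square makes its cells holes, so those cells get $v_a=0$ and $\dd$ vanishes on their pairs. I will show that every rule application in the original derivation either survives or is replaced by one that is at least as strong. Take the derivation sequence of the original certificate restricted to the retained selected classes.
\begin{itemize}
\item A line rule on retained cells is unchanged.
\item A rectangle transfer whose four corners are all retained is unchanged.
\item A rectangle meeting a deleted cell now has a hole, so the hole ground gives zero deviation on both diagonals directly. Whatever that rectangle transferred before is still available.
\item Saturation and complementary-pair identifications among retained pairs are unaffected.
\end{itemize}
The retained selected classes are a subset of the original ones, so completeness (identified and pairwise orthogonal) is inherited. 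I will formalize this by induction on the length of the original derivation, proving that each derived zero deviation between retained cells remains derivable.

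\emph{$\SG$ admissibility under deletion of selected pairs.} This is the step I expect to be the main obstacle, because the signed graph is not monotone in an obvious way. The plan is to compare the signed deviation graphs before and after deletion. After deletion the deleted cells become holes, so every edge through them gets a known zero. Components that previously contained such cells therefore acquire a known zero and are forced to zero outright. Any component that was forced before by an odd cycle or by unresolved inequalities in both parity classes either keeps that witness or now contains a known zero. The delicate case is a witness that used the inequality of a deleted pair: the deleted pair's cells are now holes, which supplies a zero in that component. I will therefore run an induction over the rounds of the closure, showing that every identification made originally among retained pairs is also made (possibly earlier) after deletion. Orthogonality of the retained classes then follows as in the $\RW$ case.
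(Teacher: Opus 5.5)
Your first two parts match the paper's proof. The contrapositive for irreducibility is the paper's one-line argument, and the $\RW$ replay with removed cells turned into holes is the same argument. Your hole-ground bullet also covers the case your last bullet omits: a complementary partner that was deleted leaves the retained pair with zero deviation, so it is identified. For $\SG$ you also choose the paper's strategy, an induction over the rounds of the closure of $D$. However, the inductive step as sketched does not yet close.

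The gap is the consequence of your own phrase ``possibly earlier''. At a given round the retained display $D'$ may already have identified retained pairs that $D$ has not. Its classes on retained cells are then coarser, so the two signed graphs are not related merely by discarding vertices that meet removed cells. Each vertex of $D$ between retained classes must be sent to the vertex of $D'$ containing it. This map is undefined when its two classes have merged in $D'$; that vertex is then a known zero and gives identified-diagonal grounds to its neighbours. Edges of $D$ may also turn into grounds of $D'$, and the map need not be injective.

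Your dichotomy ``keeps that witness or now contains a known zero'' misses this last possibility. Suppose two vertices of opposite parity, in a component forced by opposite selected inequalities, acquire the same image. Then the parity classes collapse, and the image is forced only because the connecting path has become an odd closed walk. Similarly, a forced component meeting removed or merged vertices does not acquire a zero as a whole. It splits into pieces, and each piece must be shown grounded through its edge to a removed or merged vertex.

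Finally, orthogonality does not follow ``as in the $\RW$ case''. A signed certificate may rely on odd cycles or paired inequalities that no recursive replay reproduces. The induction hypothesis must therefore carry the forced vertices between retained classes, not only the identifications, and orthogonality is read off at the terminal round of $D$. These are exactly the points the paper's proof supplies.
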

\begin{proof}
A shorter SOS for the subfamily, together with the omitted squares,
would shorten the full display. For the recursive assertion, replay a
retained derivation in order. A removed companion cell becomes a hole;
otherwise its rectangle prescription is unchanged. Identifications stay
within complete selected supports. If one of two complementary selected
pairs is removed, the other acquires a hole ground.

For $\SG$, let $D'\subseteq D$ be the retained pairs. We show by induction
over the rounds of the closure of $D$ that its identifications of retained
pairs, and its forced vertices between retained classes, are also obtained
for $D'$. So assume this holds before some round.

Then the classes of $D'$ on retained cells are at least as coarse as those
of $D$. Map each vertex of $D$ between retained classes to the vertex of
$D'$ containing it, or to nothing if its two classes have merged in $D'$;
delete vertices involving a removed class. Under this map:
\begin{itemize}
\item an edge of $D$ goes to an edge of $D'$, or to a ground of $D'$. The
      latter happens when the other diagonal of its rectangle contains a
      removed cell, now a hole, or lies in a class merged in $D'$;
\item line grounds, hole grounds and identified-diagonal grounds persist.
\end{itemize}

Now take a forced component $C$ of $D$. It breaks into pieces, each
either the whole of $C$ or joined within $C$ to a deleted or merged
vertex. A piece of the second kind is grounded in $D'$. The whole of $C$
keeps its ground, its odd closed walk, or its two opposite selected
inequalities, since those pairs are retained and unresolved in $D'$. So
every forced vertex of $D$ between retained classes is forced in $D'$.
If opposite-parity vertices acquire the same image, their connecting
path becomes an odd closed walk. Thus each retained image lies in a
forced component of the new graph, and the induction proceeds.

At the terminal state of $D$, every retained pair is identified and every
two retained classes are certified orthogonal, so $D'$ satisfies $\SG$.
\end{proof}

When restricting rows or columns, we delete every complete square that
meets a deleted cell. A surviving half is not converted into a new
singleton. A numerical limited bound on the retained rectangle is used
only after verifying that its singleton base remains maximum.

\section{Uniform half density}\label{sec:uniform-density}

We first give a construction that retains an arbitrary $C_4$-free base.
The only interaction condition concerns a matching of columns: no fixed
row may contain two of its edges. A rainbow matching will supply this
condition in the asymptotic argument.

\begin{lemma}[Column-fan gluing]\label{lem:column-fans}
Let $J$ be a matching of columns of a $C_4$-free base $F$, such that each
row of $F$ contains at most one edge of $J$. For $ab\in J$, let $H_{ab}$
be the rows missing both $a,b$, and suppose $|H_{ab}|\ge2$.
There is at most one row $z$ fixed in both columns. If it exists, color
each row of $H_{ab}$ by its common fixed column with $z$, giving separate
colors to rows with no such column. Suppose $H_{ab}$ can be ordered
without equal consecutive colors. If no $z$ exists, use any ordering.
Then
\[
 R(F)\ge\sum_{ab\in J}(|H_{ab}|-1).
\]
Here $R(F)$ denotes the pair optimum on this base even when $F$ is not maximum.
\end{lemma}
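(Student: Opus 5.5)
The plan is to add explicit zig-zag chains of pairs, one chain for each edge of $J$, and then certify $\RW$ admissibility directly from the coefficient rules of Section~\ref{sec:prelim}.

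\textbf{Construction.} Fix $ab\in J$ and order $H_{ab}=(h_1,\dots,h_k)$ as in the hypothesis, so that there are no equal consecutive colours when $z$ exists. Add the $k-1$ pairs
\[
P_i=\{(h_i,a),(h_{i+1},b)\},\qquad 1\le i\le k-1 .
\]
\begin{itemize}
\item Every cell used lies in a row missing both $a$ and $b$, so it lies outside $F$.
\item Each cell $(h_i,a)$ with $i<k$ and each cell $(h_{i+1},b)$ is used exactly once. The cells $(h_1,b)$ and $(h_k,a)$ remain holes.
\item Distinct edges of $J$ use disjoint columns, so the chains are disjoint.
\end{itemize}
This gives a configuration with $\sum_{ab\in J}(|H_{ab}|-1)$ added pairs. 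It remains to show that the display is $\RW$-admissible, which yields the stated bound on $R(F)$.

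\textbf{Identification.} I will identify the pairs of each chain by induction along the chain, starting at the hole end.
\begin{itemize}
\item Base case. In the rectangle with rows $h_1,h_2$ and columns $a,b$, the diagonal $\{(h_1,b),(h_2,a)\}$ passes through the hole $(h_1,b)$. It is unselected, so it has zero deviation by the hole ground. By \eqref{eq:rectangle}, the selected diagonal $P_1$ then has zero deviation, so $P_1$ is identified.
\item Inductive step. Suppose $P_{i-1}$ is identified, so $v_{(h_i,b)}=v_{(h_{i-1},a)}$. In the rectangle with rows $h_i,h_{i+1}$ and columns $a,b$, the unselected diagonal $\{(h_i,b),(h_{i+1},a)\}$ transports to the column-$a$ pair $\{(h_{i-1},a),(h_{i+1},a)\}$. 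These are distinct cells in one column, so the pair has zero deviation. Hence the deviation on the unselected diagonal is zero, the rectangle transfer identifies $P_i$, and the induction continues.
\end{itemize}
The condition $|H_{ab}|\ge2$ just ensures that the chain is nonempty.

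\textbf{Orthogonality.} For completeness I must show that all $N$ classes are pairwise orthogonal. Each class is either a singleton of $F$ or the identified class of some $P_i$.
\begin{itemize}
\item Classes sharing a row or column. Any two classes that share a line are orthogonal by the line rule, after transport along identified classes. This already covers two pairs of the same chain, since all such pairs meet column $a$ or $b$ after transport.
\item A pair and a singleton off its lines. For $P_i$ and a singleton $s=(r,c)$, I will use rectangles through $s$ and a cell of $P_i$. The aim is an unselected companion diagonal that is either a hole or a line pair after transport.
\item Pairs from different edges. For $P_i\subset$ columns $a,b$ and $Q\subset$ columns $a',b'$ with $aa'\ne$ an edge, the hypothesis that each row of $F$ contains at most one edge of $J$ should guarantee the following: in the relevant rectangles on columns $\{a,b\}\times\{a',b'\}$, the companion diagonals avoid configurations where both companions are fixed. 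Rectangle transfer from a known zero then finishes.
\end{itemize}

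\textbf{Main obstacle.} The hard case is a singleton in a row fixed in both $a$ and $b$, which is exactly the special row $z$. It is at most one row, because $F$ is $C_4$-free. Rectangles with rows $h_i$ and $z$ give
\[
\langle v_{(h_i,a)},v_{(z,b)}\rangle+\langle v_{(h_i,b)},v_{(z,a)}\rangle=0,
\]
which links $P_i$ and $P_{i-1}$ to the two singletons of $z$. A common fixed column $c$ of $h_i$ and $z$ creates further rectangles tying these deviations together. I expect that equal colours on $h_i,h_{i+1}$ would close a loop with no ground, whereas distinct consecutive colours let each deviation be grounded through a line pair in column $c$. I would verify this case first and most carefully, since it is where the colouring hypothesis must be used.
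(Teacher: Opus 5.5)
Your construction and identification step are correct. The identification is the paper's backward normalization, run from the $b$-hole end instead of the $a$-hole end. The genuine gap is the orthogonality phase, which carries the substance of the lemma and which you only outline.

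Write $Y_j$ for the coefficient of a foreign column $c$ at a path row $h_j$; it may be a fixed cell, a hole, or a pair of another fan. One rectangle does not in general ground $\langle P_i,Y_j\rangle$. Its companion diagonal is $P_j$ against $Y_i$, or $P_{j-1}$ against $Y_{i+1}$, which is a target of the same kind. Moreover, the $a$-rectangle swap $\langle P_i,Y_j\rangle\leftrightarrow\langle P_j,Y_i\rangle$ is an involution, so ``transport until a hole or line pair'' need not terminate. The missing idea is the paper's well-founded descent. Alternate an $a$-rectangle with a $b$-rectangle, so that the index gap drops by two each round until a row ground $j\in\{i,i+1\}$ appears. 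Once all fans are normalized, the same descent covers pairs of other fans that have a row on the path.

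Off the path, the relevant fact is that rows outside $H_{ab}$ carry only fixed cells or genuine holes in columns $a,b$. A fixed cell in a row missing $a$ or $b$ is grounded by that hole. A pair of another fan with both rows off the path is grounded by the hole that $C_4$-freeness forces in its $2\times2$ block on columns $a,b$. The hypothesis that each row contains at most one edge of $J$ is not what settles pair/pair products.

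Your treatment of the row $z$ targets the wrong products. $P_i$ meets both columns $a$ and $b$, so its products with $(z,a)$ and $(z,b)$ are already line grounds, and your displayed identity just reads $0+0=0$. The hard targets are $\langle P_i,v_{(z,d)}\rangle$ with $(z,d)\in F$ and $d\notin\{a,b\}$. Your intuition that equal colours are fatal is right: they produce the fork of Theorem~\ref{thm:unary}. But the positive mechanism is different from the one you describe.
\begin{enumerate}
\item Distinct colours give a row $r\in\{h_i,h_{i+1}\}$ with $(r,d)\notin F$.
\item The rectangle through $P_i$'s cell in row $r$ and $(z,d)$ then has other diagonal $\{(z,a),(r,d)\}$ or $\{(z,b),(r,d)\}$.
\item Here $(r,d)$ is either a hole or a cell of a pair $Q$ of another fan $\{d,d'\}$.
\item In the second case, $z$ contains at most one edge of $J$, so $(z,d')$ is a genuine hole.
\item That hole grounds the product of $Q$ with $(z,a)$ or $(z,b)$, through $Q$'s identified cell in column $d'$.
\end{enumerate}
This is exactly where the one-edge-per-row hypothesis is used; a ``line pair in column $c$'' does not do the job.

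Finally, you omit fixed/fixed products altogether. They stop being automatic once former holes have become pair cells. By $C_4$-freeness each such rectangle has a nonfixed companion. That companion must be grounded by a hole, or by a pair/fixed or pair/pair product proved earlier. You also need to check that the two companions never form one selected pair. This holds because that pair's opposite corners lie in rows missing both of its columns, so they cannot be fixed.
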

\begin{proof}
Write an ordering as $q_0,\ldots,q_L$ and select
$A_t=\{(q_t,a),(q_{t+1},b)\}$ for $0\le t<L$.
The two endpoint cells $(q_0,b),(q_L,a)$ are holes. Different fans use
disjoint column sets. First normalize every chain backwards: the terminal
$a$-hole starts the induction. Once $A_{t+1}$ is identified, its representative
$(q_{t+2},b)$ shares column $b$ with the other companion $(q_t,b)$
of $A_t$. This is a column ground, or a hole ground if that companion
is absent. No product proved later is used.

For a foreign column, let $Y_k$ denote its coefficient at $q_k$.
A row grounds $A_tY_k$ when $k=t,t+1$. When $k<t$, an $a$-rectangle
transfers the target to $A_kY_t$. When $k>t+1$, a $b$-rectangle gives
$A_{k-1}Y_{t+1}$; this is a row ground for $k=t+2$, and otherwise an
$a$-rectangle gives $A_{t+1}Y_{k-1}$. The gap decreases by two.
Thus every pair is orthogonal to every foreign coefficient on its path.

Pairs of one fan share a column. For a pair $P$ from another fan, the
previous paragraph applies if one of its rows lies on this path. Otherwise
both rows of $P$ lie outside it, where columns $a,b$ contain only fixed
cells or holes. $C_4$-freeness supplies a hole in that two-by-two block;
choosing the corresponding representatives grounds the product. All
pair/pair products are therefore proved.

Consider a fixed cell $(y,d)$. Path rows and own fan columns are already
covered. If an exterior row $y$ misses $a$ or $b$, that cell is still a
genuine hole: other fans use other columns. It directly grounds the target.
Only the common fixed row $z$ remains. For a fixed $(z,d)$ outside $a,b$,
the ordering gives an endpoint row $r$ of the pair with $(r,d)\notin F$.
Use the rectangle through this endpoint and $(z,d)$. Its companion at
$(r,d)$ is a hole or a pair $Q$ in another fan. In the latter case $z$
misses that fan's other column, since $z$ contains at most one edge of $J$.
Since $z$ is fixed in $d$, it lies outside that fan's path. Its missing
cell in the other column is therefore a genuine hole. Use $Q$'s
representative in that column---available because all fans were normalized
first---to ground its product with $(z,a)$ or $(z,b)$. This is a direct
hole ground, so the common-root cases introduce no circular dependency.

Finally, a rectangle between two fixed cells has a nonfixed companion
by $C_4$-freeness. Its deviation is zero by a hole or a pair/fixed or pair/pair product
already proved. The two companions cannot be one selected pair: that
pair's opposite corners lie on rows missing both of its columns and
therefore cannot be the original two fixed cells. Line products are
immediate. These are all the recursive-line rules, and every singleton
has been retained.
\end{proof}

The coloring in the lemma is well-defined by $C_4$-freeness. A multiset
can be ordered without equal consecutive colors whenever its largest
color class has size at most half the total, rounded up: list the colors
in decreasing multiplicity and fill alternate positions first, then the
remaining positions. Thus the lemma
applies if every column incident with the common row $z$ has degree
$o(|H_{ab}|)$, uniformly over the selected edges.

\subsection{Choosing column pairs}

We use the following matching lemma.
Its proof uses Tutte's theorem and the de Bruijn--Erd\H{o}s incidence theorem.
\Needspace{7\baselineskip}
\begin{lemma}[Packing blocks in a linear set system]\label{lem:block-capacity}
Let a set of $N$ points be partitioned into parts $\mathcal P$, and let
$\mathcal L$ be a family of subsets of size at least two. Distinct members
of $\mathcal L$ meet in at most one point, and each meets every part in
at most one point. For $q\ge1$, there are $q$ disjoint pairs, none inside a part and
at most one inside each member of $\mathcal L$, if and only if
\[
 N\ge2q,\qquad |P|\le N-q\ (P\in\mathcal P),\qquad
 |L|\le N-q+1\ (L\in\mathcal L).
\]
\end{lemma}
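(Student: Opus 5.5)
The plan is to prove necessity by counting, then reduce sufficiency to a matching problem in a graph defined from the set system and apply the Tutte--Berge formula. For necessity, take $q$ disjoint admissible pairs. They cover $2q$ points, so $N\ge2q$. Each pair meets a part $P$ in at most one point, so each pair has a point outside $P$, giving $N-|P|\ge q$. For a member $L$, at most one pair lies inside $L$. Each other pair has a point outside $L$, so $N-|L|\ge q-1$.

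For sufficiency, let $G$ be the graph on the $N$ points whose edges join points lying in different parts and in no common member of $\mathcal L$. Since parts and members are linear and meet in at most one point, every pair of distinct points is \emph{covered} by at most one block (part or member). A matching of $G$ uses no pair inside a part and no pair inside any member. The allowance of one pair per member is handled separately, as follows.

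Call a member \emph{tight} if $|L|\ge N-2q+2$. Only such members can obstruct a matching of $G$ of size $q$. Two tight members share at most one point, so there are few of them unless $q$ is close to $N/2$. For each tight member, pre-select one internal pair, chosen disjoint from the other pre-selected pairs and from the other tight members where possible. I then delete its two points and $1$ from $q$, and check that the three inequalities persist for the residual instance with $N-2$ and $q-1$. The pair condition is checked directly. For the member condition, removing two points of $L$ lowers $|L|$ by $2$. Other members lose at most one point, because of linearity, so $|L'|\le N-q+1$ becomes $|L'|-1\le (N-2)-(q-1)+1-1$. The part condition follows since the pre-selected pair meets each part at most once.

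It then suffices to show that $G$ has a matching of size $q$ whenever all parts have size at most $N-q$ and all members have size at most $N-2q+1$. By Tutte--Berge, a failure gives a set $S$ with $\mathrm{odd}(G-S)>|S|+N-2q$. In particular $G-S$ has at least two components. Two points in different components of $G-S$ are non-adjacent, hence covered by a block. The remaining point set $T=V\setminus S$ with the traces of the blocks is therefore a linear space in which every cross-component pair is covered. Unless all of $T$ is covered by a single block $B$, I will use the de Bruijn--Erd\H{o}s theorem. It says that the blocks meeting $T$ in at least two points number at least $|T|$, and near-pencils are the extremal case. This forces $G-S$ to have essentially one large component plus at most one exceptional point, contradicting the odd-component count. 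In the remaining case $T\subseteq B$, the count $\mathrm{odd}(G-S)\le|T|$ together with $|S|\ge N-|B|$ gives $|B|\ge N-|S|>2|S|+N-2q-|S|$. This yields $|B|>N-q$ for a part, or $|B|>N-2q+1$ for a member after accounting for pre-selection. Each case contradicts the hypotheses.

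The main obstacle is the middle step: making the de Bruijn--Erd\H{o}s dichotomy quantitative enough to exclude many odd components when $T$ is not inside one block. The near-pencil case, one block of size $|T|-1$ plus one point, has to be treated separately as a block of size $|T|-1$. I would first try to bound $\mathrm{odd}(G-S)$ by the number of blocks through a fixed point of $T$ outside a maximal block, and compare with $|S|+N-2q$.
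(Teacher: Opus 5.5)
\textbf{There is a genuine gap, in the sufficiency reduction.} Your necessity argument is fine. The gap comes before Tutte--Berge is applied. The claim that only members with $|L|\ge N-2q+2$ can obstruct a matching of $G$ is false, and so is the statement you reduce to.

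\emph{Counterexample.} Take the Fano plane: $N=7$ points, singleton parts, and the seven lines as $\mathcal L$. For $q=1$ or $q=2$, all three hypotheses hold. No line is tight, since the threshold is $7$, respectively $5$. Yet $G$ has no edges, because every pair of points lies on a line. The lemma's conclusion holds there only because pairs may lie inside members.

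\emph{Why this defeats the approach.} The need for member-internal pairs is a global covering phenomenon, not a property of individual large members. Pre-selecting pairs for tight members cannot capture it. In the balanced case $N=2q$ every member is tight, and there can be far more than $q$ members; the paper's application, with rows of a $C_4$-free base as members, is of this kind.

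\emph{The Tutte--Berge step has the same defect.} De Bruijn--Erd\H{o}s only gives a \emph{lower} bound on the number of blocks covering cross-component pairs. Nothing in your graph bounds how many members may separate components, so no contradiction follows. In the Fano example, $S=\emptyset$ gives seven odd components covered by seven lines, exactly as the theorem allows.

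\emph{The residual checks are also incomplete.} A part of size exactly $N-q$ missed by the pre-selected pair violates the new bound $N-q-1$. An untouched member of size $N-q+1$ violates $N-q$. Choosing pairs disjoint from other tight members makes this worse.

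\textbf{The missing idea} is to build the allowance of one internal pair per member into the matching problem itself, rather than forbidding all member-internal pairs. The paper first reduces to $N=2q$ by adding $N-2q$ dummy points as one new part. This preserves all three inequalities with $q$ replaced by $N-q$, and the pairs meeting dummies are discarded afterwards. Then, for each $L\in\mathcal L$, it adds two adjacent new vertices joined to every point of $L$. Direct edges join only points in different parts and in no common member. In a perfect matching, the two added vertices of $L$ either match each other or absorb the single pair inside $L$.

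In Tutte's condition, a member can now separate point-containing components only if both of its added vertices lie in $S$, at a cost of $2$ per member. This yields the \emph{upper} bound $r+\lfloor c/2\rfloor$ on the sets covering pairs of component representatives. That bound contradicts de Bruijn--Erd\H{o}s when $c>h+2r$. The cases where all remaining points lie in one part or one member are then closed by $|P|\le q$ and $|L|\le q+1$.
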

\begin{proof}
Necessity follows by counting points outside a part or a member of
$\mathcal L$. First suppose $N=2q$. For each $L$, add two adjacent
vertices and join both to every point of $L$. Join two original points
directly precisely when they lie in different parts and in no common
member of $\mathcal L$. A perfect matching of this graph gives the
required pairs: the two vertices of $L$ either match each other or
match two points of $L$. The intersection assumptions make these
assignments unambiguous.

Apply Tutte's theorem \cite{Tutte}. Delete a vertex set $S$, containing
$h$ original points, both added vertices of $r$ members of $\mathcal L$,
and one added vertex of $s$ others. Thus $|S|=h+2r+s$. Components with
no original point contribute at most $s$ odd components. If the number
$c$ of components containing points is at most $h+2r$, Tutte's inequality
follows. Assume $c>h+2r$.

If $c=1$, then $h=r=0$. For $s>0$, every remaining added vertex still
has a point neighbor, so there is at most one odd component, at most
$|S|$. For $s=0$, the graph has even order and its only point-containing
component is even. If $r=0$ and $c\ge2$, any two points in different
components must belong to the same part: a common member of
$\mathcal L$ would still have an added vertex joining them. All remaining points therefore
belong to one part, giving $c\le2q-h\le q\le h$, a contradiction.

It remains that $r\ge1$ and $c\ge3$. Choose one point from each component.
Every pair of representatives belongs to exactly one part or one of
the $r$ removed members of $\mathcal L$. If the representatives are not
all in one such set, the de Bruijn--Erd\H{o}s incidence theorem
\cite{deBruijnErdos} requires at least $c$ sets. There are at most
$r+\lfloor c/2\rfloor$, since the parts are disjoint. This contradicts
$c>h+2r$. Thus every choice of representatives is contained in one set.
Keeping two representatives and varying a third shows that all remaining
points lie in that same set. A part gives $c\le h$ as above. A member of
$\mathcal L$ gives $2q-h\le q+1$, hence
$c\le2q-h\le h+2\le h+2r$, again a contradiction. Tutte's theorem applies.

For general $N\ge2q$, add $N-2q$ dummy points as one new part, meeting
no member of $\mathcal L$. There are $2(N-q)$ points, and the balanced
case applies with $q$ replaced by $N-q$. Delete the pairs meeting
dummy points. The remaining $q$ pairs have the required properties.
\end{proof}

\subsection{Below a fixed quadratic scale}

\begin{proof}[The first range in Theorem~\ref{thm:uniform-density}]
Orient $m\ge n$, put $M=|F|=z(m,n)$, and fix
$\varepsilon=1/100$, $c_0=\varepsilon^2$.
For all sufficiently large $m$, uniformly over $1\le n\le m$,
\begin{equation}\label{eq:ordinary-lower-uniform}
 M\ge(1-\varepsilon)n\sqrt m.
\end{equation}
Indeed choose a prime $t=(1-o(1))\sqrt m$ with $t^2+t+1\le m$,
and keep $\min(n,t^2+t+1)$ columns of its incidence matrix.
The ratio of this support's size to $n\sqrt m$ tends to one uniformly:
if the plane has fewer than $n$ columns, use $n\le m$.
The prime number theorem and incidence construction suffice
\cite{Singer,Hadamard}.

Suppose $n\le m\le c_0n^2$ and $n$ is sufficiently large.
Pair counting gives $M\le n\sqrt m+m\le2\varepsilon n^2$.
For row degrees $d_r$ and $\mu=M/m$, using
\eqref{eq:ordinary-lower-uniform},
\[
 V:=\sum_r(d_r-\mu)^2
 \le n(n-1)+M-\frac{M^2}{m}\le4\varepsilon n^2,
 \qquad \mu\le\sqrt n+1\le2\varepsilon n.
\]
Consequently $\max_r d_r\le(2\varepsilon+2\sqrt\varepsilon)n<n/2$.
Apply Lemma~\ref{lem:block-capacity} to the $n$ columns, with singleton
parts and with fixed-row neighborhoods of size at least two as
$\mathcal L$. These sets are distinct and meet pairwise in at most one
column, by $C_4$-freeness. With $q=\lfloor n/2\rfloor$, all capacity
conditions hold, including $d_r\le n-q+1$.
We obtain $q$ disjoint column pairs with at most one inside each fixed row.

A column of degree $E$ has at most $E+n-1$ fixed cells in its incident
rows. Applying pair counting to the remaining rows gives
$M\le m+n-1+n\sqrt{m-E}$. Therefore
\[
 \sqrt{1-E/m}\ge1-\varepsilon-\frac{\sqrt m}{n}
                         -\frac1{\sqrt m}\ge1-3\varepsilon,
 \qquad E\le6\varepsilon m<m/5.
\]
For every matched pair $ab$, its common-hole set has at least $3m/5$
rows, and every ordering color class has at most $m/5$ rows (including
fresh singleton classes for large $m$). Lemma~\ref{lem:column-fans} applies.
Writing $e_a$ for a column degree, disjointness of the matched columns gives
\begin{equation}\label{eq:fan-exact-sum}
 M+R(F)\ge M+\sum_{ab\in J}(|H_{ab}|-1)
          \ge |J|(m-1).
\end{equation}
Here $|H_{ab}|=m-e_a-e_b+|N_F(a)\cap N_F(b)|$, and the sum of all
matched column degrees is at most $M$. Thus
$M+R(F)\ge\lfloor n/2\rfloor(m-1)\ge mn/2-m$.
\end{proof}

\subsection{The quadratic and larger ranges}

The following elementary replacement bounds row degrees when the
number of rows is at least a fixed multiple of $n^2$.

\begin{lemma}\label{lem:bounded-row-trade}
For each $c>0$, the row degrees of every maximum $m\times n$ base
with $m\ge cn^2$ are bounded by a constant depending only on $c$,
for all sufficiently large $n$.
\end{lemma}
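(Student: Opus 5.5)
The plan is a replacement argument: a row of large degree in a maximum base can be traded for a lighter row that contains more cells, contradicting maximality. Fix $c>0$, let $F$ be a maximum $m\times n$ base with $m\ge cn^2$, and suppose some row $r_0$ has degree $d$. Delete $r_0$ from $F$. The remaining support is $C_4$-free on $m$ rows, one of them empty, and it has $|F|-d$ cells. We will refill the empty row with a set $T$ of columns such that no column pair inside $T$ is already covered by another row. The result is again $C_4$-free, so maximality forces $|T|\le d$. The whole task is therefore to show that a large $d$ lets us find such a $T$ with $|T|>d$.

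The first step bounds how many column pairs the other rows cover. By $C_4$-freeness each column pair lies in at most one row, so the rows other than $r_0$ cover $\sum_{r\ne r_0}\binom{d_r}2\le\binom n2-\binom d2$ distinct pairs.

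Next we relate the degrees to $m$, using $m\ge cn^2$. Lemma~\ref{lem:cap} and pair counting give $|F|\le m+\binom n2$. For the lower bound, the base contains at least $m$ cells, one per row, since every row of a maximum base is nonempty; indeed an empty row could receive any single cell. Hence the excess $\sum_r(d_r-1)=|F|-m$ is at most $\binom n2\le m/(2c)$. So all but $O(n^2)$ rows have degree one, and the average degree stays bounded. In particular, many rows have degree exactly one. Such rows are useful because their cells do not contribute to the covered pairs, while they do fix which columns are heavy.

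The main step is the construction of the refill set $T$, and this is where I expect the difficulty. A first attempt is a greedy choice of three columns, each pair uncovered. Counting gives $d\ge3$ directly whenever the covered pairs do not exhaust $\binom n2$, but this is too weak. What we need is a statement in the other direction: if $d$ exceeds some $K(c)$, then some set $T$ of size $d+1$ is independent in the graph of covered column pairs.

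There is more freedom than just refilling $r_0$. We may also change a degree-one row, moving its cell to another column, and we may alter several rows at once. In this version, remove $r_0$ together with a bounded number of heavy rows, and spend the freed $d$ cells on degree-two rows formed from uncovered pairs. These pairs include the $\binom d2$ pairs inside the support of $r_0$, which are uncovered elsewhere. We have on the order of $cn^2$ rows available, and most of them have degree one. Converting degree-one rows into degree-two rows on distinct freed pairs gains one cell per converted row. Removing $r_0$ loses $d$ cells, and after $r_0$ is removed, splitting its support into $\lfloor d/2\rfloor$ disjoint pairs gives only $\lfloor d/2\rfloor$ new pairs. The count has to use all $\binom d2$ pairs inside $\operatorname{supp}(r_0)$ instead. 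Each degree-one row, say in column $x$, can be converted into a row $\{x,y\}$ for any freed pair $\{x,y\}$. Since there are at least $m-O(n^2)$ degree-one rows, with $m\ge cn^2$, and a freed pair can use any degree-one row that lies in one of its columns, we assign one degree-one row to each of the $\binom d2$ freed pairs. This gains $\binom d2-d>0$ cells once $d\ge4$, contradicting maximality.

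The real obstacle is that assignment: it must be injective, so each freed pair needs a distinct degree-one row in one of its columns. I would resolve it with Hall's theorem. This requires every column of $r_0$ to carry enough degree-one rows. If some column of $\operatorname{supp}(r_0)$ has few degree-one rows, I would first relocate degree-one cells into that column, which is harmless because a degree-one row creates no pairs. The bound then becomes an absolute constant, and it is uniform in $c$ once $n$ is large enough that $m-\binom n2$ is positive.
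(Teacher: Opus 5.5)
Your proposal has a genuine gap: the contradiction needs at least $d+1$ rows of degree at most one, and you never obtain them in the range where the lemma matters. (Any such row can be reset to a freed pair inside $A=\operatorname{supp}(r_0)$ with net gain at least one, so Hall's theorem is not needed.) The only supply you establish comes from $\sum_r(d_r-1)\le\binom n2$, which gives at least $m-\binom n2$ degree-one rows. That number is nonpositive as soon as $c<1/2$.

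This is exactly the regime where the lemma has content. For $m\ge\binom n2$, Lemma~\ref{lem:cap} already bounds every degree by two, and the paper applies the present lemma with $c=10^{-4}$. The step ``all but $O(n^2)$ rows have degree one \ldots\ many rows have degree exactly one'' fails when $O(n^2)$ may exceed $m$. Relocating cells cannot create degree-one rows that are not there.

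In fact there may be none. The lines of a projective geometry $\mathrm{PG}(r,q)$ form a Steiner system $S(2,q+1,n)$. Taking its $m=\binom n2/\binom{q+1}2$ lines as rows gives a $C_4$-free base in which every row has degree $k=q+1$. Convexity of $\binom d2$ under $\sum_r\binom{d_r}2\le\binom n2$ shows this base is maximum. Here $m\approx n^2/k^2$, so with $q=3$ this also refutes your claimed absolute bound of degree at most three. The bound must depend on $c$, growing like $c^{-1/2}$.

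The missing idea is to use rows of bounded degree rather than degree one. Your averaging bound $|F|/m\le1+1/(2c)$, together with Markov's inequality, gives at least $m/2\ge n+1$ rows of degree at most some $K=K(c)$.

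Replacing such a row by a pair no longer gains anything. Instead you must replace it by a block of $K+1$ columns inside $A$. Pairs inside $A$ are uncovered by the surviving rows, so the result stays $C_4$-free provided the $d+1$ inserted blocks pairwise share at most one column. Such a packing of $(K+1)$-subsets of $A$ exists by a greedy first-moment choice once $d>2\binom{K+1}2^2+1$.

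Now remove $r_0$ and $d$ low rows; this loses at most $d(K+1)$ cells. Inserting the blocks into the $d+1$ freed rows adds $(d+1)(K+1)$ cells, contradicting maximality. This is the paper's proof. Your pair replacement is its case $K=1$, which is why it only reaches the terminal range.
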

\begin{proof}
Pair counting gives $M/m\le1+1/\sqrt c$. Choose an integer
$K\ge2(1+1/\sqrt c)$. At least $m/2\ge n$ rows have degree at most
$K$, for sufficiently large $n$; these differ from the high row used below.
Suppose a row has degree $a>2\binom{K+1}{2}^2+1$ and put $k=K+1$.
Remove that row and $a$ low-degree rows. Let $A$ be its $a$ columns;
every surviving row meets $A$ at most once.

There are $a+1$ $k$-subsets of $A$ with no repeated pair. Construct
them greedily: after at most $a$ choices there are at most
$a\binom k2$ forbidden pairs, and a uniformly chosen $k$-subset
contains fewer than one of them in expectation, since
\[
 \frac{a\binom k2^2}{\binom a2}<1.
\]
It therefore avoids every forbidden pair for some choice.
Insert these $a+1$ blocks in the freed rows. The result is $C_4$-free.
At most $a+aK=ak$ cells were removed, but $(a+1)k$ were inserted,
contradicting the maximum cardinality of $F$.
\end{proof}

\begin{proof}[Completion of Theorem~\ref{thm:uniform-density}]
First assume $m\ge cn^2$ for fixed $c>0$ and sufficiently large $n$.
By Lemma~\ref{lem:bounded-row-trade}, choose an integer $D\ge2$,
with $D=O_c(1)$, bounding all row degrees. Color each column pair by its unique common fixed row,
or give it a fresh color when none exists. Set $\varepsilon=1/5$. Call a column heavy if
its degree exceeds $\varepsilon m$. Since $M=O_c(m)$, there are
$O_c(1)$ heavy columns. Omit these vertices from the column graph.
Mark as forbidden each edge whose common fixed row meets a heavy column.
For one heavy column $h$, $C_4$-freeness gives
\[
 \sum_{z\sim h}\binom{d_z-1}{2}
 \le\frac{D-2}{2}\sum_{z\sim h}(d_z-1)
 \le\frac{(D-2)(n-1)}2.
\]
Thus only $O_c(n)$ edges are forbidden.

Omit one further vertex if necessary so the number $N=n-O_c(1)$
of remaining columns is even. This parity deletion does not enlarge the set
of forbidden row colors. Choose a uniform perfect matching of the complete
graph on these vertices, with $N\ge4$. A given edge is selected with
probability $1/(N-1)$, so the expected number of forbidden matching edges
is $O_c(1)$. Two disjoint edges are selected together with probability
$1/((N-1)(N-3))$. Each row color has at most $B=\binom D2$ edges, and
\[
 \sum_{\text{row colors }i}\binom{b_i}{2}
 \le\frac{B-1}{2}\binom n2=O_c(n^2).
\]
Thus the expected number of equal-color pairs of matching edges is $O_c(1)$;
intersecting edges cannot both be selected, and fresh colors never collide.
Some matching has total cost $O_c(1)$. Delete its forbidden edges
and all but one edge of each repeated color. A color appearing $r$ times
costs at most $r-1\le\binom r2$ deletions. We obtain a rainbow matching
of $n/2-O_c(1)$ edges.

For each retained matching edge $ab$, both columns have degree at most
$m/5$, and their common fixed row, if any, meets no heavy column.
Hence $|H_{ab}|\ge3m/5$, while every ordering color class has size at
most $m/5$. Lemma~\ref{lem:column-fans} and the exact summation
\eqref{eq:fan-exact-sum} give
\begin{equation}\label{eq:quadratic-error}
 M+R(F)\ge |J|(m-1)\ge\frac{mn}{2}-O_c(m).
\end{equation}
The constants depend only on $c$, uniformly in $m\ge cn^2$ and in $F$.

Take the fixed $c=c_0=10^{-4}$ from the first range. Together, the two
arguments give $M+R(F)\ge mn/2-Cm$ for some absolute $C$, for all
$m\ge n\ge n_0$. Increasing $C$ to at least $n_0/2$ covers $n<n_0$
as well, since the claimed lower bound is then nonpositive.
Every line, rectangle and normalization rule is invariant under exchanging
rows and columns, so transposition gives the stated bound for all dimensions.

The cell caps $S(F)\le(mn-M)/2$ and $\zt(m,n)\le(mn+M)/2$, together
with $\zr(m,n)\ge M+R(F)$, give both comparison bounds.
Finally $M/(mn)\le1/n+1/\sqrt m\to0$ when both dimensions grow.
The lower bound just proved and the cell cap therefore give the uniform
half-density limits.
\end{proof}

\section{A universal two-column extension}\label{sec:extension}

The exact-width constructions in Sections~\ref{sec:sixconstruction} and
\ref{sec:otherwidths} use special interfaces. The following
extension works for an arbitrary simple input, at the cost of a bounded
number of holes at each fixed width.

\begin{theorem}[Short path extension]\label{thm:extension}
Let $P$ be any simple display on $m\ge1$ rows and $n\ge1$ columns.
Add columns $A,B$ and pairs
\[
 Q_k=\{(k-1,A),(k,B)\},\qquad1\le k\le m-1,
\]
leaving $(0,B),(m-1,A)$ empty. The new polynomial $P^+$ satisfies
$\sos(P^+)=\sos(P)+m-1$. Irreducibility, $\RW$ and $\SG$ each hold
before the extension if and only if they hold afterward.

If one also adds $2n+1$ fresh two-singleton rows, one for each column
pair involving $A$ or $B$, let $f,d,N,H$ count the singletons, pairs,
displayed squares and holes. The exact changes are
\begin{align}\label{eq:extensioncounts}
 m'&=m+2n+1,&n'&=n+2,\notag\\
 f'&=f+4n+2,&d'&=d+m-1,\notag\\
 N'&=N+m+4n+1,&H'&=H+2n^2+n+2.
\end{align}
The minimum SOS length increases by $m+4n+1$. A maximum terminal
singleton base remains maximum, and
$m'-\binom{n+2}{2}=m-\binom n2$.
\end{theorem}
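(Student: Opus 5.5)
The plan is to prove the length identity first, using the coefficient equations of Section~\ref{sec:prelim} in an arbitrary Gram representation. The three criteria and the counts then follow from it.

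\emph{Upper bound.} The inequality $\sos(P^+)\le\sos(P)+m-1$ is immediate. Any shortest bilinear SOS for $P$, together with the $m-1$ squares $(u_{k-1,A}+u_{k,B})^2$, represents $P^+$.

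\emph{Lower bound.} Fix a Gram representation of $P^+$ and write $A_k$, $B_k$ for the coefficient vectors at $(k,A)$, $(k,B)$. The two empty endpoint cells give $B_0=A_{m-1}=0$. From these, I would run the backward normalization of Lemma~\ref{lem:column-fans}. The $a$-hole at row $m-1$ starts an induction showing $A_{k-1}=B_k$ for every $k$. The mechanism is that the representative of $Q_{k+1}$ shares column $B$ with the companion of $Q_k$, which is a column ground, or a hole ground at row $0$.

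Next, every old column $Y$ plays the role of a foreign column. The gap-reduction argument of Lemma~\ref{lem:column-fans} (row grounds for $k=t,t+1$, then alternating $a$- and $b$-rectangles) shows $\langle w_k,Y_j\rangle=0$ for all $k,j$, where $w_k$ is the common vector of $Q_k$. Here the path is all of $0,\dots,m-1$, so no exterior rows and no common fixed row $z$ occur, and the delicate case of that lemma disappears. The $w_k$ are unit vectors, pairwise orthogonal because they share columns $A,B$.

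Restricting the representation to the old columns therefore gives a Gram representation of $P$ inside the orthogonal complement of $\operatorname{span}(w_k)$. Since all constraints among old cells are unchanged, the rank is at least $\sos(P)+m-1$. Every step used only the coefficient equations, so the same argument is simultaneously an $\RW$ derivation for the new pairs.

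\emph{Equivalence of the criteria.} For irreducibility, the forward direction follows from the length identity, and the backward direction from Lemma~\ref{lem:heredity}.

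For $\RW$ and $\SG$ in the forward direction, I would replay the derivation for $P$ verbatim. Rectangles inside old columns carry identical prescriptions, and the new products are supplied by the derivation above.

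For the backward direction, the key observation is the following. A rectangle yields an old/old product only if both its diagonals lie in old columns. Rectangles meeting $A$ or $B$ relate only products involving new cells, and those are already zero independently. Hence the deviation graph restricted to old/old vertices, its grounds, its sign cycles and its selected-pair inequalities are the same for $P^+$ as for $P$, and the closures agree on old classes. I expect this non-interaction check, especially for the signed components, to be the main obstacle. It must rule out any mixed rectangle that creates an edge between two old/old vertices.

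\emph{Fresh rows and counts.} Each of the $2n+1$ fresh rows has exactly two singletons and $n$ holes. Within such a row, the two singleton vectors are orthogonal by a line ground. Every rectangle meeting the row has a hole or line ground, so its singleton vectors are orthogonal to everything else, and the rank rises by $4n+2$ by the same complement argument. This gives $N'=N+m+4n+1$.

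The holes increase by $n(2n+1)+2$, counting the fresh-row holes and the two endpoints. The remaining entries of \eqref{eq:extensioncounts} are direct counting.

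Finally, if $F$ is a maximum terminal base, then by Lemma~\ref{lem:cap} it has one degree-two row per column pair among the old columns. The fresh rows supply exactly one row for each new pair, so the new base attains $m'+\binom{n+2}2$. The new base is therefore maximum, by \eqref{eq:ordinaryterminal} and the identity $m'-\binom{n+2}2=m-\binom n2$.
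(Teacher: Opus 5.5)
Your treatment of the chain follows the paper's proof. You use the wave normalization (backward from $(m-1,A)$ where the paper goes forward from $(0,B)$; either works), the gap-reducing rectangles that make each $Q_k$ orthogonal to every old coefficient without any old identification, and the direct-sum Gram structure. You also use the observation that old/old vertices never interact with new ones. That check, which you flag as the main obstacle, is immediate. Both diagonals of a rectangle use the same two rows and the same two columns, and no class mixes old and new cells.

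\textbf{The gap is in the fresh rows.} Your claim that every rectangle meeting a fresh row has a hole or line ground is false for the rectangle spanned by that row's own two singleton columns. Take the fresh row $r$ with singletons in $A,B$ and an interior row $1\le s\le m-2$. The corners $(s,A),(s,B)$ are chain cells, so there is no hole. The rectangle only exchanges $\langle v_{(r,A)},v_{(s,B)}\rangle$ with $\langle v_{(r,B)},v_{(s,A)}\rangle$, and neither product is grounded by itself. The same happens for a row with singletons in $A,j$ against an old row $s\le m-2$ whose cell $(s,j)$ is occupied.

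The missing ingredient is to use the normalized chain first. Each $Q_k$ then has representatives $(k-1,A)$ and $(k,B)$.
\begin{itemize}
\item A fresh singleton in a new column is orthogonal to $Q_k$ by a column ground.
\item A fresh singleton $(r,j)$ in an old column, in a row whose other singleton lies in $A$ (respectively $B$), is orthogonal to $Q_k$ through the rectangle using the representative in $B$ (respectively $A$). That rectangle's companion diagonal contains the hole $(r,B)$ (respectively $(r,A)$).
\end{itemize}
Only after these fresh/chain products are proved does the exceptional fresh/old product $\langle v_{(r,A)},v_{(s,j)}\rangle$ close: it transfers to the already proved $\langle v_{(r,j)},v_{(s,A)}\rangle$. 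Fresh/fresh products are grounded because the fresh supports are distinct column pairs. With this ordering your complement argument gives the increment $4n+2$, and the criteria transfer as before.
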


\begin{proof}
For $m\ge2$, the first pair normalizes through $(0,B)$, and the ordinary wave
normalizes every $Q_k$. Distinct new classes share a column and are
orthogonal. Consider any old coefficient $X_{l,c}$, including an
unresolved half. A same-row ground handles $k=l$. Otherwise the
$B,c$ rectangle gives
\[
 \langle Q_k,X_{l,c}\rangle
   \longleftrightarrow \langle X_{k,c},B_l\rangle.
\]
If $l=0$ or $X_{k,c}$ is a hole, it is grounded. Otherwise
$B_l=Q_l$, whose outgoing representative is $(l-1,A)$.
If $k=l-1$, this is a row ground. In every other case a second
rectangle gives
\begin{equation}\label{eq:universaldrift}
 \langle Q_k,X_{l,c}\rangle
   \longleftrightarrow\langle A_k,X_{l-1,c}\rangle
   =\langle Q_{k+1},X_{l-1,c}\rangle,
\end{equation}
unless $k=m-1$, when $A_k$ is a hole. The index move $(k,l)\mapsto
(k+1,l-1)$ reaches a hole or a row ground. No old identification is
assumed. Cases with coincident rows use the line rule instead. Thus the new chain is orthogonal to every old
coefficient in every feasible Gram representation.

The additional singleton rows are sparse. A singleton in a new column
is orthogonal to the chain by a common column. One in an old column
uses the other new column, a hole in that row. Its products with old
coefficients are line/hole grounds; the sole possible exception
transfers to a just-established new-singleton/chain product. The new
two-cell row supports are distinct, so their singleton set is $C_4$-free
and their mutual products are likewise grounded.

After merging each new pair, every feasible Gram matrix is the direct
sum of an old feasible block and an identity block of the asserted
size. Conversely such a direct sum always represents the new
polynomial. This proves the rank increments and irreducibility
equivalence. All the new identities are $\RW$ inferences. An old/old
rectangle stays wholly old, and no new pair gives an old cell another
representative; consequently new closure cannot repair an old
uncertified component, for either criterion. The converses follow.
The case $m=1$ has no chain pairs and is included. Finally the new
singleton rows represent exactly the new column pairs, proving the
maximum-base assertion and the counts.
\end{proof}

\begin{proof}[Proof of \eqref{eq:fixedslope}]
Start at width $b=3$ for odd $n$, using the prior exact family
$\zr(r,3)=2r$ for $r\ge3$ \cite{LQ}; it has three holes.
For even $n$ start at $b=4$, using \cite{CC} for $r\ge15$, with
zero or one hole. Repeated application of Theorem~\ref{thm:extension}
reaches every target row count satisfying
\[
 m\ge\binom n2\quad(n\text{ odd}),\qquad
 m\ge\binom n2+9\quad(n\text{ even}).
\]
The hole count is at most
\begin{equation}\label{eq:holepolynomial}
 h_b+\sum_{w=b,b+2,\ldots,n-2}(2w^2+w+2),
 \qquad h_3=3,\quad h_4=1.
\end{equation}
It is independent of $m$ and equals $n^3/3+O(n^2)$.
Equation~\eqref{eq:cellidentity} bounds the deficit from $U_n(m)$ by
half this quantity, with the appropriate integer rounding. The prior
cap and hierarchy prove \eqref{eq:fixedslope} for all three parameters.
\end{proof}

The fixed-width gaps $\zt-\zr$ and $\zs-\zr$ are consequently bounded
as $m$ grows, but the argument permits a nonzero bounded gap. The
individual-configuration equivalence in Theorem~\ref{thm:extension}
does not convert arbitrary optimized extremizers into one another.

\section{Exactness at the first and final stages}\subsection{One pair on a maximal singleton base}\label{sec:unary}

An inclusion-maximal $C_4$-free base need not be maximum. Nevertheless,
maximality already makes the one-pair sufficient criteria exact.

\begin{theorem}[One-pair completeness]\label{thm:unary}
Let $F$ be inclusion-maximal and $C_4$-free in the full rectangle, and
let $a,b\notin F$ be distinct. The display with singleton base $F$ and
one selected pair $\{a,b\}$ is irreducible if and only if it satisfies
$\RW$, if and only if it satisfies $\SG$.
\end{theorem}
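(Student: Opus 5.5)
The forward implications are already in place. $\RW$ is a sub-closure of $\SG$, so $\RW\Rightarrow\SG$. The soundness of $\SG$ recalled in Section~\ref{sec:prelim} gives $\SG\Rightarrow$ irreducibility, since a complete certificate forces rank at least $N=|F|+1$ in every Gram representation. What remains is irreducibility $\Rightarrow\RW$. I would prove the contrapositive: if the $\RW$ closure of the one-pair display fails to be complete, I will exhibit a Gram representation of rank $|F|$, that is, a shorter bilinear SOS. With a single pair, completeness consists of exactly two things. First, the pair must be identified, $\langle v_a,v_b\rangle=1$. Second, the common class $w$ must be certified orthogonal to every singleton vector $v_e$, $e\in F$. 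Singleton/singleton orthogonality is never the issue, because any rectangle between two fixed cells has a nonfixed companion by $C_4$-freeness, exactly as in the last paragraph of the proof of Lemma~\ref{lem:column-fans}.

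\textbf{Step 1: identification.} If $a,b$ share a line, the line rule identifies them. Otherwise write $a=(r,c)$, $b=(s,d)$ and look at the companions $(r,d),(s,c)$. If either companion is a hole, the hole ground identifies the pair. The remaining case has both companions in $F$, and then \eqref{eq:rectangle} reads $\langle v_a,v_b\rangle+\langle v_{rd},v_{sc}\rangle=1$. In that case I would use inclusion-maximality. Adding $a$ to $F$ must create a $C_4$, so some rectangle through $a$ has its other three corners in $F$; the same holds for $b$. The aim is to use these rectangles, together with the fact that the two fixed cells $(r,d)$ and $(s,c)$ already lie in a rectangle with two nonfixed cells, to derive a ground for $\langle v_{rd},v_{sc}\rangle$ by a chain of rectangle transfers. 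If no such ground exists, the two corners must be essentially isolated. In that situation the free deviation $t=\langle v_{rd},v_{sc}\rangle$ can be moved off zero, and the rank can then be reduced on the $2\times2$ block.

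\textbf{Step 2: orthogonality.} Once the pair is identified, I would take a singleton $e=(p,g)$ not collinear with $a$ or $b$. The rectangle through $a$ and $e$ transfers $\langle w,v_e\rangle$ to the product of its two companions, and the companions are again a hole, a singleton, or a half of the pair. Chasing these transfers, with inclusion-maximality supplying a fixed corner wherever a hole would otherwise be missing, should ground every such product. It should also show that every unresolved product lies in a single connected component of the transfer graph carrying no ground.

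\textbf{Step 3: converse construction.} For that ungrounded component, I would write down the one-parameter family of Gram matrices obtained by giving the component a common deviation $t$, with alternating signs along rectangle edges as in the signed graph. I would then check positive semidefiniteness for small $t\ne0$ and choose $t$ so that $w$ becomes a linear combination of the singleton vectors of that component. This yields rank $|F|$.

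I expect Step~3 to be the main obstacle. The difficulty is proving that an ungrounded component really admits a rank-dropping feasible deformation, rather than merely a feasible nonzero deviation. My first attempt would be to show, using maximality, that such a component is a tree of rectangles without odd cycles, and then to construct the lower-rank representation explicitly along that tree.
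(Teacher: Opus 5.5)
Your forward direction is fine, and Steps 2--3 would in fact go through. Once $\{a,b\}$ is identified, a product $\langle w,v_f\rangle$ with $f=(t,e)$ stays ungrounded exactly when $(r,e),(s,e),(t,c),(t,d)$ are all fixed (three cells for a degenerate pair). Its component is then a three-vertex path, and the product relation $Ef=gh+jk$ gives a rank drop via $I-H/\lambda_{\max}(H)$, as in Theorem~\ref{thm:final-graph-relations}; the paper writes this as the explicit identity \eqref{eq:fork6}. So Step 3 is not the real obstacle, and maximality plays no role there.

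The genuine gap is in Step 1, in the case where both companions $u=(r,d)$ and $v=(s,c)$ are fixed. No chain of transfers can ground $\langle v_u,v_v\rangle$: the only coefficient equation containing it is \eqref{eq:rectangle} for the rectangle through $\{a,b\}$ itself. So $\RW$ necessarily fails here, and you must show directly that the display is reducible. Your proposed mechanism, moving the deviation and dropping rank on the $2\times2$ block, cannot do this. That block has Gram blocks $\left(\begin{smallmatrix}1&t\\t&1\end{smallmatrix}\right)$ and $\left(\begin{smallmatrix}1&1-t\\1-t&1\end{smallmatrix}\right)$, with minimum rank three; this is precisely the paper's irreducible $2\times2$ example. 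A signed deformation does not help either, because every product among the classes $w,u,v$ is grounded, by a line or by ground~(ii) through the pair.

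What is missing is the use of inclusion-maximality to bring in a third line. Adding $a$ to $F$ creates a rectangle with three fixed corners, and it cannot contain both $u$ and $v$, since its fourth corner would be $b$.
\begin{itemize}
\item If it contains neither, use the switched base $F'=(F\setminus\{u,v\})\cup\{a,b\}$. It carries the same polynomial by $u_u^2+u_v^2+(u_a+u_b)^2=u_a^2+u_b^2+(u_u+u_v)^2$, and it contains a $C_4$ whose four singleton squares collapse to two.
\item If it contains exactly one, say $u$, you get fixed cells $(t,c),(t,d)$. The display then contains the five-square strip $(x_ry_c+x_sy_d)^2+(x_ry_d)^2+(x_sy_c)^2+(x_ty_c)^2+(x_ty_d)^2$, which has the explicit four-square replacement \eqref{eq:sqrtthree}.
\end{itemize}
Without this step, or some other explicit shortening of that configuration, the implication irreducibility $\Rightarrow\RW$ is not proved.
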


\begin{proof}
Write $a=(r,c)$, $b=(s,d)$. On a $C_4$-free base, the unary recursive
test has the following two geometric conditions:
\begin{enumerate}[label=(\alph*)]
\item If the pair is nondegenerate, its companion cells $(r,d),(s,c)$
      are not both fixed.
\item There is no fixed cell $(t,e)$, with $t\notin\{r,s\}$ and
      $e\notin\{c,d\}$, for which
      $(r,e),(s,e),(t,c),(t,d)$ are all fixed.
\end{enumerate}
Repeated corners in a degenerate pair are counted only once.
Under (a), the pair normalizes by a line or hole. Under (b), every
pair/fixed product has a line or companion-hole ground. The remaining
fixed/fixed products then follow from $C_4$-freeness and
\eqref{eq:rectangle}. Thus (a),(b) imply $\RW$.

Every failure has an actual shortening. In a nondegenerate fork set
\[
 E=x_ry_c+x_sy_d,\quad f=x_ty_e,\quad
 g=x_ry_e,\ h=x_ty_c,\ j=x_sy_e,\ k=x_ty_d.
\]
These six displayed forms satisfy $Ef=gh+jk$, and
\begin{equation}\label{eq:fork6}
 E^2+f^2+g^2+h^2+j^2+k^2
       =(E+f)^2+(g-h)^2+(j-k)^2.
\end{equation}
For a degenerate fork there are five distinct forms with $Ef=g(h+k)$,
and a four-square replacement is
\begin{equation}\label{eq:fork5}
 \left(E+\frac f{\sqrt2}\right)^2+\frac{f^2}{2}
 +\left(g-\frac{h+k}{\sqrt2}\right)^2+\frac{(h-k)^2}{2}.
\end{equation}

Suppose (a) fails. Let $u=(r,d)$, $v=(s,c)$ be fixed. Since $u_au_b=u_uu_v$,
\begin{equation}\label{eq:switch}
 u_u^2+u_v^2+(u_a+u_b)^2
       =u_a^2+u_b^2+(u_u+u_v)^2.
\end{equation}
Put $F'=(F\setminus\{u,v\})\cup\{a,b\}$. If $F'$ contains a $C_4$,
its four singleton squares reduce to two by the rectangle identity.
Otherwise, maximality of $F$ supplies a fixed rectangle completed by
adding $a$. It must contain $u$ or $v$, since otherwise it survives in
$F'$, and cannot contain both, since its fourth corner would be $b$.
It therefore produces, possibly after transposition, a five-square
subsum
\begin{equation}\label{eq:strip5}
 (Au+Bv)^2+(Av)^2+(Bu)^2+(Cu)^2+(Cv)^2.
\end{equation}
Here $A,B,C$ are three row variables and $u,v$ two column variables.
Its explicit four-square replacement is
\begin{align}\label{eq:sqrtthree}
 &\left(Au+\frac{B+\sqrt3C}{2}v\right)^2
 +\left(\frac{\sqrt3B-C}{2}v\right)^2 \notag\\
 &\hspace{9mm}+\left(Av+\frac{B-\sqrt3C}{2}u\right)^2
 +\left(\frac{\sqrt3B+C}{2}u\right)^2.
\end{align}
Expansion gives exactly \eqref{eq:strip5}. Every replacement is bilinear
and shortens a genuine displayed subfamily. Thus failure of (a) or (b)
is reducibility. Soundness $\RW\Rightarrow\SG\Rightarrow$ irreducibility
completes the equivalence.
\end{proof}

The strip obstruction itself is prior: the known two-column bound already
implies that five displayed squares in a $3\times2$ strip are reducible
\cite[Appendix~B.1]{LQ}. Formula~\eqref{eq:sqrtthree} makes the local
certificate explicit. The additional point in Theorem~\ref{thm:unary}
is that maximality forces one of the shortening configurations whenever
unary normalization fails.

\begin{corollary}\label{cor:unaryfilter}
In an irreducible limited configuration, every individual selected pair,
together with the entire singleton base, satisfies both unary tests.
\end{corollary}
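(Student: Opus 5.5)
The plan is to derive the corollary from Lemma~\ref{lem:heredity} and Theorem~\ref{thm:unary}. Take an irreducible limited configuration $(F,D)$ and fix any selected pair $\{a,b\}\in D$. Consider the subfamily of displayed squares made up of all singleton squares $u_c^2$ with $c\in F$ and the single square $(u_a+u_b)^2$. By the first assertion of Lemma~\ref{lem:heredity}, every subfamily of an irreducible display is irreducible. So the one-pair display with base $F$ and selected pair $\{a,b\}$ is irreducible, where the cells of the other pairs of $D$ become holes.

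Next I would check the hypotheses of Theorem~\ref{thm:unary}. The configuration is limited, so $F$ is $C_4$-free with $|F|=z(m,n)$. As noted in Section~\ref{sec:prelim}, a maximum base is inclusion-maximal, and $a,b\notin F$ are distinct because selected supports are disjoint. Theorem~\ref{thm:unary} then says this irreducible one-pair display satisfies $\RW$ and $\SG$. In particular, both unary geometric conditions (a) and (b) from its proof hold for $\{a,b\}$ relative to the full base $F$. This is what "satisfies both unary tests" means.

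Since the pair was arbitrary, the conclusion holds for each selected pair. The only point needing care is the interpretation: the unary tests must be read on the rectangle where all other selected cells are holes. That is exactly the setting of Theorem~\ref{thm:unary}, which assumes only that $F$ is inclusion-maximal in the full rectangle and places no condition on the remaining cells. So no real obstacle arises. The only step worth stating explicitly is that heredity keeps the whole singleton base rather than a restriction of it, so that inclusion-maximality, and hence the theorem, applies.
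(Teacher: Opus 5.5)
Your proposal is correct and follows the paper's own proof: a maximum base is inclusion-maximal, heredity makes each one-pair subfamily (full base $F$ plus one selected pair, other selected cells now holes) irreducible, and Theorem~\ref{thm:unary} then gives both unary tests. Your remark that heredity keeps the entire singleton base, so that inclusion-maximality still applies, is exactly the point the paper relies on.
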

\begin{proof}
A maximum base is inclusion-maximal. Apply Lemma~\ref{lem:heredity}
and Theorem~\ref{thm:unary} to each one-pair subfamily.
\end{proof}
This is a necessary preprocessing condition, not joint sufficiency for
several pairs. The maximality hypothesis also cannot be dropped: on
$2\times2$, the three-square display
$(x_1y_1+x_2y_2)^2+(x_1y_2)^2+(x_2y_1)^2$ is irreducible but fails the
signed test on its nonmaximal two-cell singleton base. Its Gram matrices
have blocks $\left(\begin{smallmatrix}1&t\\t&1\end{smallmatrix}\right)$
and $\left(\begin{smallmatrix}1&1-t\\1-t&1\end{smallmatrix}\right)$;
positivity is $0\le t\le1$, with minimum rank three.

\subsection{The final transfer graph}\label{sec:transfer-graph}

This section makes precise how the two criteria relate once every selected
pair has been identified. We show that, from that point on, the orthogonality
phase of $\SG$ is exact: it fails exactly when the displayed forms satisfy a
nontrivial quadratic relation. We use only simplicity: no cell lies in two
displayed squares. The base need not be $C_4$-free or maximum. The rules of
$\RW$ and $\SG$, and their soundness proofs in \cite{LQ}, use only
simplicity, although they are stated there for limited graphs.

\subsubsection{The graph and its relations}

Let $(F,D)$ be a simple display. Its \emph{classes} are the displayed
squares' supports: the singletons of $F$ and the pairs of $D$. For a
class $X$ put $f_X=\sum_{a\in X}u_a$. Thus
\[
 P_{F,D}=\sum_X f_X^2 \qquad\text{and}\qquad N=|F|+|D|.
\]

\begin{definition}\label{def:final-graph}
The \emph{final transfer graph} $\Gamma(D)$ is defined as follows.
\begin{itemize}
\item \emph{Vertices} are the unordered pairs $\{X,Y\}$ of distinct
      classes. A \emph{cross pair} of $\{X,Y\}$ is a pair of cells
      $x\in X$, $y\in Y$.
\item A vertex is \emph{grounded} if some cross pair
      \begin{enumerate}[label=(\roman*)]
      \item lies in one row or one column, or
      \item spans a genuine rectangle whose other diagonal contains a
            hole or is a pair of $D$.
      \end{enumerate}
\item \emph{Edges}: each genuine rectangle with all four corners occupied,
      whose two diagonals are cross pairs of vertices $v$ and $w$, gives an
      edge $vw$. It is a loop when $v=w$.
\end{itemize}
A component is \emph{grounded} if it contains a grounded vertex. It is
\emph{bipartite} if it has no closed walk of odd length.
\end{definition}

Every loop is at a grounded vertex. Suppose the two diagonals
$\{(r,c),(s,d)\}$ and $\{(r,d),(s,c)\}$ of one rectangle give the same
vertex. Then two corners in one row, or in one column, belong to the same
class. Another cross pair of that vertex then lies in one line, so the
vertex is grounded by~(i).

\Needspace{10\baselineskip}
\begin{theorem}\label{thm:final-graph-relations}
Let $C=(C_{XY})$ be a real symmetric matrix indexed by classes. Then
$\sum_{X,Y}C_{XY}f_Xf_Y$ vanishes identically if and only if $C$ has zero
diagonal and the function $h(\{X,Y\})=C_{XY}$ satisfies:
\begin{itemize}
\item $h$ vanishes at grounded vertices;
\item $h(v)=-h(w)$ along every edge $vw$.
\end{itemize}
The space of such relations therefore has dimension equal to the number of
ungrounded bipartite components of $\Gamma(D)$.

If such a component exists, then $\sos(P_{F,D})\le N-1$.
\end{theorem}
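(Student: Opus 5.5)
The argument is a direct coefficient comparison in the bilinear monomials $u_au_b$, followed by a perturbation of the identity Gram matrix. Expand
\[
 \sum_{X,Y}C_{XY}f_Xf_Y=\sum_{a,b}C_{X(a)X(b)}\,u_au_b,
\]
where $X(a)$ is the class containing the cell $a$. Since the display is simple, every occupied cell lies in exactly one class and holes contribute nothing. Group the monomials $u_au_b=x_rx_sy_cy_d$ by the multiset of row and column indices.

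A diagonal monomial $u_a^2$ arises only from $(a,a)$. Its coefficient is $C_{X(a)X(a)}$, so $C$ must have zero diagonal. A monomial $x_r^2y_cy_d$ with $c\ne d$ arises only from the two cells $(r,c),(r,d)$, and similarly in columns. This forces $h=0$ at every vertex having a cross pair in one line; when both cells lie in one class, the entry is a diagonal entry of $C$, which is already zero.

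For $r\ne s$ and $c\ne d$, the monomial $x_rx_sy_cy_d$ is produced by exactly the two diagonals $\{(r,c),(s,d)\}$ and $\{(r,d),(s,c)\}$. Its vanishing reads
\[
 C_{X(rc)X(sd)}+C_{X(rd)X(sc)}=0,
\]
with an empty term omitted when a corner is a hole. This gives three cases. If the other diagonal contains a hole, the vertex is grounded, matching rule (ii). If the other diagonal is a pair of $D$, its term is a diagonal entry of $C$, hence zero, so this is again grounding. Otherwise all four corners are occupied in distinct-class diagonals, which is exactly an edge relation $h(v)=-h(w)$. The remaining case, a diagonal lying inside one class, is handled by the zero diagonal. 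The converse holds because every monomial coefficient is one of these expressions. This proves the characterization.

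For the dimension count, on each component the edge relations determine $h$ up to one scalar once a vertex value is fixed. The value propagates with alternating sign, so an odd closed walk forces $h=0$. A grounded vertex also forces zero on its component, and loops occur only at grounded vertices, as noted before the theorem. On an ungrounded bipartite component, $h=\pm t$ according to the bipartition class is consistent, and distinct components are independent. Hence the dimension equals the number of such components.

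For the final claim, take a nonzero relation $C$ and consider the symmetric matrix $I+tC$ indexed by classes. Because $\sum C_{XY}f_Xf_Y\equiv 0$,
\[
 \sum_{X,Y}(I+tC)_{XY}f_Xf_Y=P_{F,D}\qquad\text{for every real }t.
\]
Because $C$ has zero diagonal and is nonzero, it is symmetric with trace zero, so it has a negative eigenvalue $\lambda$. At $t=-1/\lambda$, the matrix $I+tC$ is positive semidefinite and singular, so its rank is at most $N-1$. Factoring it as $G^\top G$ with $G$ having at most $N-1$ rows expresses $P_{F,D}$ as a sum of at most $N-1$ squares of linear combinations of the $f_X$. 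These combinations are bilinear forms, so $\sos(P_{F,D})\le N-1$.

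The main obstacle is the bookkeeping in the rectangle case. One must check that the three possible fates of the opposite diagonal (a hole, a selected pair, or a pair of distinct classes on occupied cells) match exactly the grounding rule (ii) and the edge rule of Definition~\ref{def:final-graph}. Degenerate situations, such as both diagonals giving the same vertex or a diagonal lying in a single class, must not create spurious conditions; the loop remark and the zero-diagonal condition are what control these.
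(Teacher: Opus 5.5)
Your proposal is correct and follows the paper's proof essentially step for step. Both use the same coefficient comparison over square, line and rectangle monomials, the same sign propagation on components of $\Gamma(D)$, and the same rank-reducing perturbation of the identity Gram matrix: the paper's $I-H/\lambda_{\max}(H)$ is your $I+tC$ after replacing $C$ by $-C$. The one slip to fix is that $\lambda$ must be the smallest eigenvalue $\lambda_{\min}(C)<0$, not an arbitrary negative eigenvalue; otherwise $I-C/\lambda$ need not be positive semidefinite.
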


\begin{proof}
We compare coefficients of the biquadratic monomials.
\begin{itemize}
\item \emph{Squares $u_a^2$.} This monomial occurs only in $f_X^2$ for the
      class $X$ of $a$, with coefficient $C_{XX}$. Hence $C_{XX}=0$.
\item \emph{Line monomials} $x_r^2y_cy_d$ and $x_rx_sy_c^2$. Each arises
      from a single cell pair, and when its cells lie in distinct classes
      the coefficient is $2h(v)$. This is condition~(i).
\item \emph{Rectangle monomials} $x_rx_sy_cy_d$. Each arises exactly from
      the two diagonals of one rectangle. A diagonal contributes $2h$ of its
      vertex when both cells are occupied and in distinct classes, and
      nothing otherwise (a hole, or a single class, where $C_{XX}=0$). So
      the coefficient vanishes exactly when $h(v)+h(w)=0$ along an edge,
      or $h(v)=0$ under ground~(ii).
\end{itemize}
On a component, these conditions force $h\equiv0$ unless the component is
ungrounded and bipartite. On such a component the solutions are the
multiples of its $\pm1$ colouring.

Now take such a colouring $H$. It is symmetric, nonzero, has zero diagonal
and trace, and satisfies $f^{\mathsf T}Hf\equiv0$. With
$\lambda=\lambda_{\max}(H)>0$, the matrix $W=I-H/\lambda$ is positive
semidefinite of rank at most $N-1$, and $P_{F,D}=f^{\mathsf T}Wf$.
Factoring $W$ gives at most $N-1$ bilinear squares; this is the rank step
of \cite[Lemma~2.13]{XY}.
\end{proof}

\subsubsection{Completeness after identification}

In any SOS representation of $P_{F,D}$, the coefficient equations of
Section~\ref{sec:prelim} hold for the cell vectors. Every rule of $\RW$
and $\SG$ is a deduction from these equations, together with the
Cauchy--Schwarz inequality for the unit vectors of a selected pair. We say that a criterion
\emph{identifies} $D$ if its closure resolves every selected pair.

\begin{theorem}\label{thm:orthogonality-completeness}
Let $(F,D)$ be simple.
\begin{enumerate}[label=(\alph*)]
\item If $(F,D)$ satisfies $\SG$, then $\Gamma(D)$ has no ungrounded
      bipartite component.
\item If $\SG$ identifies $D$ and $\Gamma(D)$ has no ungrounded bipartite
      component, then $(F,D)$ satisfies $\SG$.
\item If $(F,D)$ satisfies $\RW$, then every component of $\Gamma(D)$ is
      grounded. Conversely, if $\RW$ identifies $D$ and every component is
      grounded, then $(F,D)$ satisfies $\RW$.
\end{enumerate}
In particular, if $\SG$ identifies $D$, the following are equivalent:
\begin{itemize}
\item $(F,D)$ satisfies $\SG$;
\item $\Gamma(D)$ has no ungrounded bipartite component;
\item $P_{F,D}$ is irreducible.
\end{itemize}
\end{theorem}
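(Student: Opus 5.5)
The approach is to compare, once all selected pairs are identified, the signed closure's deviation graph with the final transfer graph $\Gamma(D)$. After identification every class $X$ carries a single unit vector $w_X$. In any feasible Gram representation, the unknowns remaining in the orthogonality phase are the products $\langle w_X,w_Y\rangle$ for distinct classes. The coefficient equations of Section~\ref{sec:prelim} then say the following. Line products vanish, which is ground (i). A rectangle with a hole, or with a selected-pair diagonal, forces its other diagonal to zero deviation, which is ground (ii). A fully occupied rectangle whose diagonals are cross pairs of $v,w$ gives $h(v)+h(w)=0$, where $h=\langle w_X,w_Y\rangle$; this is an edge of $\Gamma(D)$ with sign reversal. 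So the signed deviation graph at the terminal stage is exactly $\Gamma(D)$ with its grounds. Its forcing rule, namely a known zero or an odd sign cycle, zeros a component exactly when that component is grounded or nonbipartite. The third forcing option, unresolved selected-pair inequalities, becomes vacuous once every pair is identified.

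\emph{Part (a).} If $\SG$ holds, its closure certifies all $N$ classes pairwise orthogonal. By soundness, $P_{F,D}$ is then irreducible. If $\Gamma(D)$ had an ungrounded bipartite component, Theorem~\ref{thm:final-graph-relations} would give $\sos(P_{F,D})\le N-1$, a contradiction.

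\emph{Part (b).} Run $\SG$ until it identifies $D$. By the dictionary above, each remaining component of its deviation graph maps onto a component of $\Gamma(D)$ with the same grounds and the same cycles. The step needing care is that edges of the closure's graph may have been partially contracted or already grounded during earlier rounds. I will argue that forcing only adds grounds, so a component of $\Gamma(D)$ with a ground or odd cycle yields a forced component in the closure. Conversely, a component the closure leaves unforced must correspond to an ungrounded bipartite component of $\Gamma(D)$. With none present, every product is forced to zero and $\SG$ succeeds.

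\emph{Part (c).} $\RW$ has no parity rule: it propagates zeros along edges and from grounds only. If $\RW$ succeeds, every vertex has been grounded or reached through rectangle transfers from a ground. Hence every component of $\Gamma(D)$ contains a grounded vertex, since a component with no ground can never receive a zero. Conversely, if $\RW$ identifies $D$ and every component is grounded, then breadth-first transfer from the grounds zeros every vertex.

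For the final equivalence, the first two items are equivalent by (a) and (b). The first implies the third by soundness. The third implies the second by the last sentence of Theorem~\ref{thm:final-graph-relations}.

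The main obstacle is the bookkeeping in (b) and (c): showing that the closure's own graph on identified classes coincides with $\Gamma(D)$. In particular, every terminal edge or ground of the closure must be an edge or ground of $\Gamma(D)$ and vice versa, including rectangles whose corners lie in a single class. Here I would invoke the loop remark after Definition~\ref{def:final-graph}.
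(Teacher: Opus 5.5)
Your proposal is correct. For (b), (c) and the final equivalence it follows the paper's proof. Once every pair is resolved, the signed graph of that round contains all grounds and sign-reversing edges of $\Gamma(D)$. It may also carry extra known zeros from earlier rounds, so only containment is needed, not the ``exactly'' you assert.

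The genuine difference is in (a). You argue that $\SG$ implies irreducibility by the soundness theorem of \cite{LQ}, then use the rank reduction at the end of Theorem~\ref{thm:final-graph-relations} to exclude an ungrounded bipartite component. This is valid, since the paper notes that soundness needs only simplicity. It also makes (a) exactly the composite of the two implications you use again in the final equivalence. The paper instead perturbs directly. For $0<t\le1/\lambda_{\max}(H)$, the matrix $I-tH$ is a unit-diagonal Gram matrix for $P_{F,D}$, and in it every product on the component equals $-tH_{XY}\neq0$. That contradicts the closure's certificates. The paper's argument uses only the validity of each individual deduction in every representation, not the global rank bound, and it shows more: no vertex of such a component can be certified by any sound rule. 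Your route is shorter, given the two black boxes.

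One point of framing for (c). The forward direction is not about matching the terminal graph with $\Gamma(D)$; it concerns every step of the $\RW$ derivation, including steps taken before some pairs are identified. It goes through for two reasons. First, $\Gamma(D)$ is defined from the final classes and from membership in $D$; ground~(ii) asks only that the other diagonal be a pair of $D$. Second, saturation merely moves a certificate between cross pairs of one vertex. Spelling out these cases is the induction the paper writes. Your invariant, that a component with no ground never receives a zero, is exactly its conclusion.
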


\begin{proof}
\emph{(a).} Let $K$ be an ungrounded bipartite component with colouring
$H$, and take $0<t\le1/\lambda_{\max}(H)$. Then $I-tH$ is positive
semidefinite with unit diagonal; write $I-tH=B^{\mathsf T}B$ with columns
$w_X$. Put $v_a=w_X$ for each $a\in X$, and $v_h=0$ at holes. Since
$f^{\mathsf T}Hf\equiv0$, these vectors come from the SOS representation
\[
 P_{F,D}=\sum_i\bigl(\textstyle\sum_a(v_a)_iu_a\bigr)^2 .
\]
For $x\in X$ and $y\in Y$ with $\{X,Y\}\in K$, we get
$\langle v_x,v_y\rangle=-tH_{XY}\ne0$. But an accepting closure
certifies $\langle v_x,v_y\rangle=0$ in every representation.

\emph{(b).} Once every pair is resolved, the current classes are the
displayed supports. The signed transfer graph of that stage then contains
all grounds and edges of $\Gamma(D)$:
\begin{itemize}
\item the line rule gives ground~(i);
\item transfer from a hole, or from an identified pair's diagonal, gives
      ground~(ii);
\item the rectangle identities give the sign-reversing edges, since all
      prescriptions between distinct final classes vanish.
\end{itemize}
Grounded components are forced by propagation, and every other component
has an odd cycle. So every pair of classes is certified orthogonal.

\emph{(c).} Every certified orthogonality $p\perp q$, with $p,q$ in
distinct final classes $X,Y$, lies in a grounded component of $\Gamma(D)$.
We prove this by induction over the $\RW$ derivation.
\begin{itemize}
\item \emph{Line rule.} This gives ground~(i).
\item \emph{Saturation.} This stays within the same vertex, because
      identifications merge only the two cells of a selected pair.
\item \emph{Complementary-pair rule.} This only identifies pairs and
      produces no orthogonality.
\item \emph{Rectangle transfer} from the other diagonal $\{r,s\}$.
      \begin{itemize}
      \item If that diagonal contains a hole, or is an identified pair, we
            get ground~(ii).
      \item If $r\perp s$ was certified, then $\delta(\{r,s\})=0$, so
            $\{r,s\}$ is not a selected pair and $r$, $s$ lie in distinct
            classes. The rectangle is an edge to their vertex, which lies in
            a grounded component by induction.
      \end{itemize}
\end{itemize}
Acceptance certifies every vertex. The converse is as in~(b), without
odd cycles. Saturation moves a certificate between the cross pairs of one
vertex, and rectangle transfer carries it along an edge.

For the final equivalence: (a) and (b) give the first equivalence;
Theorem~\ref{thm:final-graph-relations} and the soundness of $\SG$
\cite[Theorem~7.2]{LQ} give the second.
\end{proof}

\begin{remark}\label{rem:final-graph}
\emph{The identification hypothesis is necessary.} Consider the
$4\times4$ display $T$ of \cite[(40)]{XY}: the four diagonal cells as
singletons, plus the six pairs $F^+$ of \cite[(38)]{XY}.
\begin{itemize}
\item A direct check shows that all $15$ components of $\Gamma$ are
      grounded. By Theorem~\ref{thm:final-graph-relations} there is no
      product relation, in agreement with \cite[Proposition~5.4]{XY}.
\item The display has no holes, no degenerate pairs and no complementary
      pairs, so neither closure identifies any pair.
\item Yet $T$ is a sum of nine squares \cite[Theorem~5.3]{XY}.
\end{itemize}
As Xu and Yan remark after their Proposition~5.4, shorter representations
may use forms outside the displayed span. Identification is what excludes
them here.

\emph{Related work.} The proof of \cite[Proposition~5.4]{XY} already
observes that relations have zero diagonal and correspond to the kernel of
the product map for one display. \cite[Remark~7.8]{LQ} describes the Gram
perturbation mechanism. In their formulation,
\cite[Proposition~2.3]{CLQIncidence} shows that after an identification
prefix, grounding of every component suffices for $\RW$; this is the
converse half of Theorem~\ref{thm:orthogonality-completeness}(c).
Here we add the necessity direction, the description of all product
relations (Theorem~\ref{thm:final-graph-relations}), and the equivalence
of $\SG$ with irreducibility when the signed closure identifies $D$.

\emph{Relation to \cite[Remark~7.8]{LQ}.} That remark declines a converse
for ungrounded bipartite signed components. The remark remains appropriate
whenever identification is incomplete, including at fixed points with
unresolved pairs. Theorem~\ref{thm:final-graph-relations} gives the
converse at the final round.

\emph{Consequence for the comparison.} If $\RW$ identifies $D$, then
$(F,D)$ satisfies $\SG$ but not $\RW$ exactly when $\Gamma(D)$ has an
ungrounded odd component and no ungrounded bipartite one. The $15\times6$
witness of \cite[Theorem~7.5]{LQ} and the display of
Proposition~\ref{prop:full-four-distinction} are of this kind; the latter's
three odd components have sizes $5$, $13$ and $3$. In any other
signed-only display, $\RW$ fails to identify some pair.

\emph{Before any identification.} Consider a hole-free display. Before
any identification, every current class has one cell, so each vertex has a single
cross pair and at most one rectangle. The only pairs either rule can then
resolve are degenerate and complementary pairs, and the two closures agree.

\end{remark}

\subsection{A signed-only full display}
\Needspace{9\baselineskip}
\begin{proposition}[Full occupancy does not make the criteria agree]
\label{prop:full-four-distinction}
There is a full-support $16\times4$ display of length $43=U(16,4)$
on a maximum singleton base which satisfies $\SG$ but not $\RW$.
\end{proposition}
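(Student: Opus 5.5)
Since the statement is existential and computer-assisted in spirit, the proof exhibits an explicit $16\times4$ display and verifies its properties via the structural results already in hand. For $m=16\ge\binom42=6$, Lemma~\ref{lem:cap} gives $z(16,4)=22$, and a maximum base has one degree-two row for each of the six column pairs and ten degree-one rows. Full support means $H=0$. Then \eqref{eq:cellidentity} gives $|F|+2|D|=64$, so $|D|=21$ and $N=43=U(16,4)$.

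\textbf{Step 1 (construction).} I would search, by computer, over maximum bases of this shape and perfect matchings of the $42$ remaining cells. The search is restricted by Corollary~\ref{cor:unaryfilter}: every selected pair must pass both unary tests. A natural starting point is a periodic chain pattern in the spirit of Lemma~\ref{lem:column-fans}, perturbed so that no hole is left.

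\textbf{Step 2 (signed acceptance).} I would run the $\SG$ closure and record the derivation. Because the display is hole-free, identification cannot start from hole grounds. It must start from degenerate or complementary pairs and then propagate along rectangle transfers. I would check that $\SG$ identifies all $21$ pairs. Theorem~\ref{thm:orthogonality-completeness}(b) then reduces acceptance to one finite check on $\Gamma(D)$: every ungrounded component must contain an odd closed walk. This check is exhibited by listing the three odd components and an odd cycle in each.

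\textbf{Step 3 (failure of $\RW$).} Here there are two cases. If $\RW$ also identifies $D$, then by Theorem~\ref{thm:orthogonality-completeness}(c) it suffices to exhibit an ungrounded component of $\Gamma(D)$. The odd components from Step 2 serve this purpose, matching the pattern described in Remark~\ref{rem:final-graph}. Otherwise the $\RW$ closure stalls at a fixed point with some pair unresolved, and that fixed point is itself the certificate of failure.

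\textbf{Main obstacle.} The difficult part is Step 1: finding a configuration where the signed closure identifies every pair while the recursive closure leaves an ungrounded component. I would first try choosing the pairs so that all identifications come from complementary and degenerate pairs, which both closures share. That choice places the entire distinction in the final orthogonality phase, where Theorem~\ref{thm:orthogonality-completeness} makes the comparison a purely graph-theoretic check.
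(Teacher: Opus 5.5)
Your verification framework is sound and is the one the paper uses. The counts are right: $|F|=22$, $H=0$, $|D|=21$ and $N=43=U(16,4)$. Showing that the closure identifies all $21$ pairs, and that every ungrounded component of $\Gamma(D)$ contains an odd closed walk, gives $\SG$ by Theorem~\ref{thm:orthogonality-completeness}(b). An ungrounded component refutes $\RW$ by part~(c). Your case split in Step~3 is unnecessary: the first half of~(c) shows that any ungrounded component refutes $\RW$, whether or not $\RW$ identifies $D$.

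\textbf{The gap is that the proposal never produces the display.} The statement is existential, and nothing in it follows from the framework alone. Step~1 is a search plan whose success is not established, and you flag it yourself as the main obstacle. If that search returned nothing, the proposal would prove nothing.

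The paper's proof consists precisely of supplying this object:
\begin{itemize}
\item the base with row supports $01,02,03,12,13,23$, followed by ten singleton rows (two in column~$2$, eight in column~$3$);
\item the $21$ explicit pairs;
\item a $31$-step \emph{recursive} normalization of every pair, so both closures identify $D$ and the distinction lies entirely in orthogonality;
\item an inventory of the $903$-vertex product graph, showing exactly three ungrounded components of sizes $5,13,3$, each containing a triangle. One example is $[0]A,[1]B,[2,3]C$, with rectangle witnesses $(0,13;0,1)$, $(0,10;1,2)$ and $(0,14;0,2)$.
\end{itemize}

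There is also a more concrete way to find such an object than a blind search seeded by column-fan chains, which naturally leave holes at their ends. Start from the known full recursive length-$43$ display of \cite[Figure~2]{CC} and re-pair a few cells. Here that means replacing $(20,42),(52,58),(53,54)$ by $(20,54),(42,52),(53,58)$, which creates the three odd components while keeping every pair recursively identifiable.
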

\begin{proof}
Number cells by $a=4r+c$, with indices starting at zero. The singleton
row supports are (writing $01$ for $\{0,1\}$, and similarly)
\[
 01,02,03,12,13,23,2,2,3,3,3,3,3,3,3,3.
\]
Take the following unordered pairs:
\begin{align*}
 &(2,3),(5,60),(7,49),(9,62),(10,57),(12,21),(15,29),\\
 &(16,50),(18,61),(24,31),(25,48),(27,28),(32,38),(33,40),\\
 &(34,36),(37,46),(41,56),(44,45),(20,54),(42,52),(53,58).
\end{align*}
The 22 singletons form a maximum base by Lemma~\ref{lem:cap}.
Every other cell occurs exactly once, so the length is $43=U(16,4)$.

Here the distinction is entirely in orthogonality: every selected pair
normalizes recursively. Packet F16 gives a 31-step normalization
sequence and a checker. After these identifications, form the
graph on the $\binom{43}{2}=903$ products of distinct selected classes.
Line products are grounded; rectangle equations reverse signs along edges.
A direct inventory gives exactly three components without a zero ground,
of sizes $5,13,3$. Each contains a triangle. For example, writing
$A=[53,58]$, $B=[42,52]$, $C=[41,56]$ for three paired classes
and $[a]$ for a singleton class, one triangle has vertices
\[
 [0]A,\qquad [1]B,\qquad [2,3]C.
\]
Its rectangle witnesses are $(0,13;0,1)$, $(0,10;1,2)$ and
$(0,14;0,2)$, where $(r,s;c,d)$ denotes rows $r,s$ and columns $c,d$. Each product denotes the inner product of the corresponding
class vectors. The signed rule grounds all three odd components. The
recursive rules cannot enter them. The F16 reader reconstructs the entire
product graph and checks all three triangles, proving the two outcomes.
\end{proof}

This example does not separate the optimized values, even on this base.
Replacing $(20,54),(42,52),(53,58)$ by
$(20,42),(52,58),(53,54)$ recovers the recursive length-$43$
construction of \cite[Figure~2]{CC}. Thus full occupancy does not justify keeping the pairing unchanged
when converting a signed display to a recursive one.

\section{Finite SOS upper bounds}
\label{sec:finite}

\begin{theorem}\label{thm:small}
For these three rectangles, $\zr=\zs=\zt$, with
\[
 \zt(5,6)=20,\qquad \zt(6,6)=24,\qquad \zt(7,6)=28.
\]
\end{theorem}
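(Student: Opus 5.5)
The plan has two halves: lower bounds from known witnesses, and matching upper bounds on $\zt$ from finite exclusions. For the lower bounds, the hierarchy \eqref{eq:hierarchy} gives $\zr\le\zs\le\zt$, so a single recursive-line witness of length $20$, $24$, $28$ suffices at each size. At $6\times6$ and $7\times6$ the witnesses are the ones already in \cite{CCQ}. At $5\times6$ I would take a maximum base of size $z(5,6)$ and add pairs until $20$ squares are displayed, certifying $\RW$ by a short normalization sequence. Alternatively, I would delete a row from the $6\times6$ witness, using Lemma~\ref{lem:heredity}, after checking that the surviving base is still maximum and then topping up with pairs.

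For the upper bounds, the cell identity \eqref{eq:cellidentity} gives
\[
N=\frac{mn+z(m,n)-H}{2}.
\]
With $z(5,6)=12$, $z(6,6)=16$ and $z(7,6)=18$ (the last two from Lemma~\ref{lem:ordinarysix}), this gives caps $21$, $26$, $30$. So I must exclude $N=21$ at $5\times6$, $N\in\{25,26\}$ at $6\times6$, and $N\in\{29,30\}$ at $7\times6$. Equivalently, I must show that every irreducible limited display on each size has at least $3$, $4$, $6$ holes respectively, with parity matching $mn+z$.

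The exclusion proceeds as follows.
\begin{enumerate}
\item Enumerate maximum $C_4$-free bases up to row and column permutation and transposition where applicable. These are few at these sizes.
\item For each base, list the admissible individual pairs. By Corollary~\ref{cor:unaryfilter}, each selected pair together with the full base must satisfy the unary conditions (a) and (b) of Theorem~\ref{thm:unary}. This prunes pairs whose companions are both fixed, or which form a fork with a fixed cell.
\item Forbid small reducible subconfigurations, again via Lemma~\ref{lem:heredity}. These are the $3\times2$ strip pattern \eqref{eq:strip5}, configurations admitting the switch \eqref{eq:switch} into a $C_4$, and, more generally, any sub-display carrying a nontrivial product relation. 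The last is detected by an ungrounded bipartite component of $\Gamma$ in Theorem~\ref{thm:final-graph-relations}, which gives $\sos\le N-1$ directly.
\item Encode the search for a near-perfect matching of the non-base cells, with at most the allowed number of holes, as a SAT instance whose clauses forbid these local reducible patterns. A refutation, checked by an independent proof checker, excludes each case.
\end{enumerate}

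The main obstacle is step 4: showing that every near-cap candidate is genuinely reducible, not merely that it fails $\RW$ or $\SG$, since failure of a criterion is not reducibility. The forbidden patterns I would use are exactly those with an explicit shortening: the fork identity \eqref{eq:fork6}/\eqref{eq:fork5}, the strip identity \eqref{eq:sqrtthree}, and the relation-based rank drop of Theorem~\ref{thm:final-graph-relations}. If some candidate survives all of these, I would search numerically for a lower-rank Gram matrix and then rationalize it into an explicit shorter SOS certificate. I expect this to be the main difficulty at $7\times6$, where two hole counts remain to be excluded.
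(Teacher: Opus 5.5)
Your overall plan follows the paper's strategy: classify the maximum bases, use recursive-line witnesses for the lower values, prune pairs by Corollary~\ref{cor:unaryfilter}, import verified shortenings of subfamilies through Lemma~\ref{lem:heredity}, and refute a complete matching formula with a checked proof. The bookkeeping that fixes your targets is wrong, though.

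One has $z(5,6)=14$, not $12$. After transposing to $6\times5$, two triples $\{a,b,c\},\{a,d,e\}$ together with the four rows $bd,be,cd,ce$ use all ten column pairs once, and pair counting gives the matching upper bound. So the $5\times6$ cap is $U(5,6)=22$. In all three cases $mn+z(m,n)$ is even and $H=mn+z(m,n)-2N$, so what must be shown is $H\ge4$ every time. Your stated thresholds are inconsistent with this. In particular, ``at least six holes'' at $7\times6$ is false: the $28$-square witness $(|F|,|D|,H)=(18,10,4)$ has four holes, so a formula allowing up to four holes would be satisfied by it. As written, your targets also omit the hole-free $22$-square display at $5\times6$.

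The repair is the observation the paper uses. Deleting one complete pair square from a hypothetical irreducible display of length $U(m,n)$ leaves an irreducible display of length $U(m,n)-1$ on the same maximum base. Hence only the two-hole targets $21,25,29$ need refutation, and the ``two hole counts'' you expected at $7\times6$ reduce to one.

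Two further differences affect feasibility rather than soundness.

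\emph{Inductive row-deletion cuts.} The paper's $7\times6$ formula reuses the smaller cases through guarded row deletion. Deleting one or two of the three degree-two rows of the canonical base leaves a maximum $6\times6$ or $5\times6$ base. The already proved bounds $24$ and $20$ then force at least $3$, respectively $5$, selected pairs to meet the deleted rows. Equivalently, holes plus pairs lying wholly inside the deleted rows number at most $1$, respectively $3$. These supply $609$ clauses with no new identities. Deleting a degree-three row would not qualify, because the retained base would not be maximum. Your plan has no such cuts, so it must find explicit shortenings for everything they exclude.

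\emph{Local reductions beyond your list.} Forks, strips and product relations are not known a priori to suffice. The paper needs fourteen further cores certified by positive-rational-weight identities. The one written out in \eqref{eq:explicitcore} uses forms such as $w_0-w_{14}+w_{23}$, which lie outside the span of the displayed forms. So it is not a rank reduction of the kind in Theorem~\ref{thm:final-graph-relations}. Your numerical Gram search followed by exact verification is therefore a necessary part of the method, not just a contingency.
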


We describe the three logically separate parts of the proof: coverage
of the mathematical domain, validity of every reduction, and the final
propositional contradiction. Full coordinate and clause data are kept in
the companion files indexed in Appendix~\ref{app:data}.

\subsection{Maximum bases and lower witnesses}

Pair counting and $\binom d2\ge2d-3$ give ordinary maxima $14,16,18$.
The first is most easily classified after transposition to $6\times5$.
Equality forces two degree-three rows and four degree-two rows. The
triples meet once, say $\{a,b,c\}$ and $\{a,d,e\}$; the remaining rows
are exactly $bd,be,cd,ce$. There are $5\cdot3=15$ row-unordered bases
on labeled columns, all isomorphic.

At $6\times6$, the only pair-count-compatible profiles at total 16 are
$3^4 2^2$, $4\,3^2 2^3$, and $3^5 1$. The last is impossible because
each column lies in at most two triple rows. In the middle profile,
both triples must contain the two columns outside the degree-four row,
creating a $C_4$. For $3^4 2^2$, every column lies in exactly two of the
four triples. The columns identify with the six edges of $K_4$ and the
triple rows with its stars. The two pair rows choose two of its three
opposite-edge pairs. This gives $30\cdot3=90$ row-unordered labeled
bases and one isomorphism class.

At $7\times6$, equality forces $3^4 2^3$. The same four triple rows
occur, and all three opposite-edge pairs are required. There are 30
row-unordered labeled bases, again one isomorphism class. Exhaustive row-mask
enumeration and all column permutations independently check these
classifications and provide maps to the three canonical inputs.

The lower displays have $(|F|,|D|,H)=(14,6,4),(16,8,4),(18,10,4)$.
The last two are the published Figure~2 witnesses of \cite{CCQ}; the
first is an explicit additional construction. Actual-cell certificates
use respectively 227, 345 and 468 inferences and certify all 190, 276
and 378 pairs of selected classes. Thus their displayed lengths are
20, 24 and 28 by soundness.

The cell caps are 22, 26 and 30. By complete-square heredity, excluding
irreducible totals 21, 25 and 29 also excludes the respective next total:
delete one complete pair square from a hypothetical irreducible display.
The maximum singleton base remains unchanged.

\subsection{Exact shortening certificates}

The input data enumerate every unary choice and every disjoint binary
choice not already excluded. Theorem~\ref{thm:unary} explains why unary
filtering on a maximum base can be exact. The frozen exclusions additionally
carry explicit polynomial identities, rather than relying on a Boolean
criterion label. Retained choices need not all be irreducible; completeness
requires that no irreducible choice be removed without proof.

We use explicit identities and coefficient-preserving rank reductions. First, an explicit SOS with fewer
squares is checked by expanding every quartic coefficient over $\Z$ or
$\Q(\sqrt3)$. Second, a nonzero zero-diagonal symmetric matrix $H$ with
$f^THf=0$, where $f$ is the vector of displayed forms, gives a rank
reduction. Since $\operatorname{tr}H=0$, its largest eigenvalue
$\lambda$ is positive, and $I-H/\lambda$ is singular PSD. Thus
$f^T(I-H/\lambda)f=f^Tf$ has fewer squares. This uses an exact
coefficient identity, not a numerical eigenvalue decision.

Fourteen further reducible subdisplays have explicit positive-rational-weight
identities of the form
\begin{equation}\label{eq:weightedidentity}
 \sum_{j=1}^{N}f_j^2=\sum_{i=1}^{N-1}\alpha_i\ell_i^2,
 \qquad \alpha_i\in\Q_{>0},\quad
 \ell_i\in\Q[x_1,\ldots,x_m,y_1,\ldots,y_n]_{(1,1)}.
\end{equation}
Here the subscript denotes bilinear forms. Clearing denominators reduces
verification to integer coefficient expansion; each
$\sqrt{\alpha_i}\ell_i$ is a real bilinear form. There is one additional
core at $5\times6$, three at $6\times6$ and ten at $7\times6$.
Each identity names the complete original singleton and pair supports,
so a source hash and a checked coordinate map identify exactly which
reducible subfamily is excluded. The accompanying explicit-core manifest
binds all fourteen identities to these original sources.

\begin{samepage}
\paragraph{A literal ten-to-nine reduction.}
For the first additional $6\times6$ core, the reduction also has a simple
explicit form. Write $w_j=x_{\lfloor j/6\rfloor}y_{j\bmod6}$ with
zero-based indices, put $p=w_{12}+w_{35}$, and let
\[
 L=\sum_{j\in\{0,2,17,20,23,24,29\}}w_j^2
   +(w_5+w_{32})^2+p^2+(w_{14}+w_{18})^2.
\]
The rectangle identities give
\begin{align}\label{eq:explicitcore}
 2L={}&(w_0-w_{14}+w_{23})^2+(w_0-w_{29})^2
       +(w_5-w_{14}-w_{18})^2\notag\\
 &+(w_5+w_{24}+w_{32})^2+(w_{29}-w_{18})^2
       +(w_{32}-w_{23}-w_{24})^2\notag\\
 &+(w_2-w_{20})^2+(w_2+p+w_{17}+w_{20})^2+(w_{17}-p)^2.
\end{align}
Dividing these nine forms by $\sqrt2$ shortens the ten-square display
$L$. Restoring the nine singleton squares at
$1,6,9,10,13,15,22,31,34$ gives an explicit $19$-to-$18$ reduction
of the same core. Its verification needs only integer coefficient
expansion. 
\end{samepage}

Restoring the omitted displayed squares transports a shortening of a
complete subfamily to every completion containing it. Row/column
substitutions are checked on the full polynomial. A local square
$6\times6$ identity may also be transposed before embedding in six
rows of a $7\times6$ rectangle; this is not a transposition of the
whole nonsquare board.

\subsection{Guarded row deletion and propositional coverage}

The $7\times6$ canonical base has three degree-two rows. Deleting one
leaves a maximum $6\times6$ base with 16 singleton cells; deleting two
leaves a maximum $5\times6$ base with 14. Let $t$ count distinct selected
pairs touching the deleted union. From a hypothetical irreducible
29-square display, the retained lengths are $27-t$ or $25-t$.
The already proved smaller cases give
\begin{equation}\label{eq:deletioncuts}
 t\ge3\quad\text{or}\quad t\ge5,
 \qquad\text{respectively}.
\end{equation}
A pair meeting both deleted rows is counted once. If $H$ is the number
of holes in the union and $I$ the number of pairs wholly inside it,
cell coverage gives $t=4-H-I$ or $t=8-H-I$. Therefore the respective
constraints are $H+I\le1$ and $H+I\le3$. Every retained singleton base
and its embedding is checked; deleting arbitrary degree-three rows
would not justify the same limited bounds.

For each target, a Boolean variable chooses each allowed pair and one
variable marks each possible hole. Pair conflicts enforce disjointness;
cell clauses enforce either a hole or an incident selected pair, and
pair/hole conflicts exclude both. All negative triples of hole variables,
together with a positive clause omitting each hole variable in turn,
express exactly two holes. Hence the selected pair count is forced to
be seven, nine or eleven. A verified reducible subfamily contributes
the clause that not all its pairs may be chosen.

\begin{center}
\begin{tabular}{rrrrrr}
\toprule
Rectangle&Target&Pair variables&Hole variables&Clauses&\shortstack{Additional\\explicit cores}\\
\midrule
$5\times6$&21&58&16&1,416&1\\
$6\times6$&25&90&20&3,070&3\\
$7\times6$&29&138&24&8,395&10\\
\bottomrule
\end{tabular}
\end{center}
There are no auxiliary variables or symmetry restrictions on the
matching search. Symmetries are used only to transport valid reductions.
The $7\times6$ total comprises 4,526 matching clauses, 609 guarded
deletion clauses and 3,260 reduction images.\footnote{These are 8,395
clause lines and 8,383 distinct clauses. Twelve hole/pair conflicts are
repeated between the matching and deletion lists; repetitions do not
affect satisfiability.} The mathematical input
check reconstructs every clause. Separate DRAT certificates are then
checked against the exact final formulas \cite{DRAT}. Thus every
irreducible target would give a satisfying assignment, whereas each
formula is unsatisfiable. This proves the required three exclusions and,
with the base and lower certificates, Theorem~\ref{thm:small}.

\subsection{The seven-by-seven square}

\begin{theorem}\label{thm:seven-square}
$\zr(7,7)=\zs(7,7)=\zt(7,7)=33$.
\end{theorem}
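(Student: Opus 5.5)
The proof follows the template of Theorem~\ref{thm:small}: a lower witness certified by $\RW$, and an upper bound on $\zt(7,7)$ obtained by excluding every irreducible display of the next admissible length. By the hierarchy \eqref{eq:hierarchy}, these two facts give the three equalities.

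\textbf{Ordinary base.} Pair counting with $\binom72=21$ and $7$ rows gives $z(7,7)=21$. The extremal bases are exactly the incidence matrices of the Fano plane, all row degrees three, and they form a single isomorphism class under row and column permutations. I would confirm this classification by exhaustive row-mask enumeration, as was done at $7\times6$. The cell cap is $U(7,7)=\fl{(49+21)/2}=35$, so there are $28$ unused cells.

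\textbf{Lower bound.} I need a display with $12$ pairs and $4$ holes on the Fano base. I would search for one by computer, starting from a cyclic pattern that uses the $\Z_7$ symmetry of the Fano plane. Its acceptance would be shown by an actual-cell $\RW$ certificate that identifies all $12$ pairs and certifies all $\binom{33}{2}$ class products. By Theorem~\ref{thm:orthogonality-completeness}(c), it is enough to identify the pairs and then check that every component of $\Gamma(D)$ is grounded. This gives $\zr(7,7)\ge33$.

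\textbf{Upper bound.} By complete-square heredity (Lemma~\ref{lem:heredity}), an irreducible display of length $35$ contains one of length $34$. It therefore suffices to exclude an irreducible total of $34$, which means $13$ pairs and exactly $2$ holes on the canonical Fano base. The first step is to apply the unary filter of Corollary~\ref{cor:unaryfilter}, which removes every pair that fails conditions (a) or (b) of Theorem~\ref{thm:unary}. The fork identity \eqref{eq:fork6} and the strip identity \eqref{eq:sqrtthree} then exclude further small configurations. Next, I would enumerate the disjoint binary choices and certify each reducible one, either by a zero-diagonal relation matrix as in Theorem~\ref{thm:final-graph-relations} or by an explicit weighted identity \eqref{eq:weightedidentity}. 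Each reducible choice becomes a clause. Guarded deletion is not available in the same form here, because deleting a degree-three row from the Fano base leaves $18$ cells on $6\times7$, which is exactly the $7\times6$ maximum $18$. So deleting one row is legitimate, and $\zt(7,6)=28$ from Theorem~\ref{thm:small} yields a cut: with $t$ the number of pairs touching the deleted row, we need $34-3-t\le28$, that is $t\ge3$, and cell coverage of the four empty cells then gives $H+I\le1$ in that row. Finally, I would encode the problem as in the six-column case, with pair and hole variables, matching and disjointness clauses, an exactly-two-holes constraint, deletion clauses for all seven rows (and for the seven columns by transposition symmetry), and one clause for each reduction image under the Fano automorphism group of order $168$. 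A checked DRAT refutation of this formula completes the exclusion.

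\textbf{Main obstacle.} I expect the hardest part to be the completeness of the reduction library. Unary and binary identities together with row and column deletion cuts may still leave satisfying assignments. If so, I would iterate: for each model the solver returns, I would determine whether the display is actually reducible, using a Gram rank search or the signed closure combined with Theorem~\ref{thm:final-graph-relations}. If it is reducible, I would extract a minimal reducible subfamily with an exact rational or $\Q(\sqrt3)$ identity and add its orbit as new clauses, repeating until the formula is unsatisfiable. If some model turned out to be genuinely irreducible, the value would be $34$ rather than $33$. The theorem asserts $33$, so this loop should terminate.
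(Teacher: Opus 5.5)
Your plan has the same architecture as the paper's proof: uniqueness of the Fano base, an actual-cell $\RW$ witness with $12$ pairs and $4$ holes, reduction from the cap $35$ to the target $34$ by heredity, the unary filter, a matching formula with reduction clauses and inherited-bound cuts, and a checked DRAT refutation.

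The one real difference is the inherited cut. The paper deletes the row \emph{and} the column through a fixed incidence. This removes five singletons and leaves a maximum $16$-cell $6\times6$ base. Then $\zt(6,6)=24$ forces $T\ge5$ selected pairs to meet the eight free cells of that cross, i.e.\ $H+I\le3$ there.

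You delete a single line and use $\zt(7,6)=28$. That is legitimate, since the remnant is an $18$-cell maximum base, and your count $t=4-H-I\ge3$ is right. But look at what survives the unary filter. On the Fano base every pair inside one row is a degenerate fork: for $c,d\notin\ell_r$, the line through $c,d$ meets $\ell_r$ in a point $e$. This supplies the fixed cells that violate condition~(b) of Theorem~\ref{thm:unary}, and the dual argument handles columns. So all $84$ same-line pairs are among the $210$ excluded choices, and $I=0$ everywhere. Your cut therefore says only that the two holes lie in distinct rows and distinct columns.

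The paper's cross cut instead bounds the number of pairs joining a row's free cells to a column's free cells, which constrains the matching itself. That is the cut built into the refuted $196$-variable formula. Your route is sound, but it leaves much more work to the counterexample-guided reduction loop.

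Two smaller points. First, no $\Z_7$-invariant display has $12$ pairs and $4$ holes. The Singer cycle acts freely on the $28$ free cells, so invariant displays have pair and hole counts divisible by $7$; the witness must break that symmetry. Second, the remark that the loop ``should terminate'' because the theorem asserts $33$ is not part of the argument. The upper bound rests only on the final checked refutation.
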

\begin{proof}
Pair counting and $\binom d2\ge2d-3$ give $|F|\le21$.
Equality forces seven degree-three rows covering every column pair once.
Up to column relabeling, the triples through column 0 are
$012,034,056$. The remaining triples through column 1 may then be taken as
$135,146$, forcing the remaining triples $236,245$.
Thus the Fano base is the unique maximum base up to row and column
permutations. A 21-singleton, 12-pair witness on this base has an
actual-cell recursive certificate, giving the lower value 33.

A target of 34 squares has 13 pairs and two holes among the 28 free
cells. Of the 378 possible individual pairs, the 210 omitted choices
have verified SOS shortenings. The complete matching formula represents
all remaining choices with 168 pair variables and 28 hole variables. The additional cuts use
exact polynomial reductions and verified row/column substitutions.
There is also an acyclic use of $\zt(6,6)=24$: deleting the row and
column of any fixed incidence leaves a maximum 16-cell base. Retain
exactly the displayed squares whose complete supports avoid that row
and column. This sub-sum is irreducible by heredity. If $T$ distinct
selected pairs meet the deleted region, its length is $29-T$, so
$T\ge5$. Only these incidence deletions are used;
a nonincident deletion would leave a smaller singleton base and would
not justify the same bound.

The complete 196-variable, 15,666-clause formula is reconstructed from
these conditions, without symmetry restrictions on matchings, and has
a separately checked DRAT refutation.
The ordinary classification transports this exclusion to every maximum
base. The cell cap is 35, and complete-square heredity excludes a
hypothetical irreducible 35-square display by deleting one pair square.
Hence $\zt(7,7)\le33$, completing the proof. The source coordinates,
the reductions, input audit and proof check are indexed in packet F7.
\end{proof}

\subsection{Further four- and five-column upper bounds}
\label{sec:further-finite}

\begin{theorem}\label{thm:further-finite}
The three parameters have the following common values:
\[
\begin{array}{c|ccccc}
(m,n)&(7,5)&(8,5)&(10,5)&(9,4)&(10,4)\\
\hline
\zr(m,n)=\zs(m,n)=\zt(m,n)&24&27&34&24&27.
\end{array}
\]
\end{theorem}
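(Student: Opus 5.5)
Each entry needs a lower witness and a matching upper bound for $\zt$; by the hierarchy \eqref{eq:hierarchy}, $\zr$ at the witness and $\zt$ at the bound then force all three values to agree. For the lower bounds I would take the known recursive witnesses from the literature on widths four and five (\cite{CC} at width four and \cite{XiChangFive} at width five). For each I would record an actual-cell $\RW$ certificate that identifies every selected pair and certifies all pairs of selected classes. This gives $\zr\ge24,27,34,24,27$ respectively.

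For the upper bounds I follow the pipeline of Theorem~\ref{thm:small}. First I compute the ordinary maxima $z(7,5)=15$, $z(8,5)=16$ or the appropriate value, and $z(10,5)$, $z(9,4)$, $z(10,4)$ by pair counting with $\binom d2\ge2d-3$ (at width four, $m\ge\binom42$ puts us in the terminal range of Lemma~\ref{lem:cap}, so $z=m+6$ and extremal bases are six incidence rows plus singleton rows). Next I classify all maximum bases up to row and column permutation by exhaustive row-mask enumeration. Then the cell cap $U(m,n)$ from Lemma~\ref{lem:cap} gives the ceiling. The target is to exclude the value one above the claimed one; complete-square heredity (Lemma~\ref{lem:heredity}) then excludes every larger total by deleting pair squares on the same base. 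For example, at $9\times4$ we have $z=15$ and $U=25$, so only $25$ must be excluded, with an odd hole parity forcing the exact hole count.

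For each excluded target and each base class I would build the propositional matching formula. Its pair variables are restricted to those passing the unary filter of Corollary~\ref{cor:unaryfilter}; the other unary choices are discarded with their explicit shortenings \eqref{eq:fork6}, \eqref{eq:fork5}, \eqref{eq:switch} or \eqref{eq:sqrtthree}. Hole variables carry an exact cardinality constraint fixed by \eqref{eq:cellidentity}, and each verified reducible subfamily contributes a clause. These reducible subfamilies come from rank reductions of Theorem~\ref{thm:final-graph-relations} (an ungrounded bipartite component of $\Gamma$) or from weighted identities \eqref{eq:weightedidentity}.

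I would order the cases so that smaller results feed guarded deletion cuts like \eqref{eq:deletioncuts}: $(7,5)$ first, then $(8,5)$ using $(7,5)$, then $(10,5)$. Similarly $(9,4)$ first and then $(10,4)$, deleting only degree-one or incidence rows whose removal leaves a maximum base. At width four, deleting a singleton row of the terminal base keeps maximality, so the cut says at least a certain number of pairs touch that row. This is very strong and should collapse most of the search. For the base cases I would reuse earlier finite values such as $\zt(6,5)$ or $\zt(8,4)$ from \cite{LQ,CCQ}. Each formula is then refuted with a checked DRAT certificate.

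The main obstacle is $(10,5)$, where the free-cell count is largest and the base classification may have several classes. There, too few reducible cores may leave satisfying assignments that are spurious but not yet excluded. My first remedy would be iterative: take each model the solver returns, test it with the final transfer graph, and whenever Theorem~\ref{thm:final-graph-relations} or an explicit shortening applies, extract a minimal reducible subfamily, add its clause, and re-solve.
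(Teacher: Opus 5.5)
Your plan is essentially the paper's proof: recursive-line lower witnesses, a classification of the maximum bases, complete-matching formulas with verified local shortenings and the exact hole count (each target is the cell cap), maximum-retaining row-deletion cuts from smaller proven values, and checked clausal refutations. Some small corrections and omissions: $z(8,5)=17$ (one triple row plus the seven remaining pair rows, a unique base), the $10\times5$ base is likewise unique, and $\zt(6,5)=20$ comes from Theorem~\ref{thm:small} rather than \cite{LQ,CCQ}; the paper also certifies some local exclusions by lower-rank real Gram matrices via a Schur-complement contraction, and uses lexicographic column-symmetry constraints at $10\times5$.
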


\begin{proof}
Since $\zr\le\zs\le\zt$, it suffices to give recursive-line lower
displays and upper bounds for $\zt$. The actual upper
bounds use the complete-matching method of this section: each excluded
local pattern has a shorter real SOS, while inherited bounds are applied
only to complete selected squares on a maximum base of the smaller
rectangle. No rejected recursive or signed certificate is an upper-bound
premise. Each target exceeds the asserted value by one:
\begin{center}
\begin{tabular}{ccrrrr}
\toprule
Rectangle&Maximum bases&$|F|$&Target pairs&Target holes&Target length\\
\midrule
$7\times5$&4&15&10&0&25\\
$8\times5$&1&17&11&1&28\\
$10\times5$&1&20&15&0&35\\
$9\times4$&3&15&10&1&25\\
$10\times4$&5&16&12&0&28\\
\bottomrule
\end{tabular}
\end{center}

Here ``maximum bases'' counts row/column isomorphism classes. Their
coverage is elementary. On five columns, pair counting gives the upper
bounds $15,17,20$. At seven rows, either two triple rows use all three
pairs in each triple, leaving four pair rows and one singleton row, or
one triple is accompanied by six of the seven remaining column pairs.
The triples in the first case meet in one column; the singleton is
anchored either there or in an outer column. In the second case the
omitted pair lies either outside the triple or crosses it. These are
four types, with 145 row multisets on labeled columns in total. At eight
rows the base is one triple and all seven remaining pair rows; at ten
rows it is all ten pair rows. Larger row degrees are excluded by the
same pair budget and $C_4$-freeness.

For four columns every maximum base consists of the six pair rows and
singleton rows. At nine rows the anchor multiplicities are
$(3,0,0,0)$, $(2,1,0,0)$ and $(1,1,1,0)$. At ten rows they are
$(4,0,0,0)$, $(3,1,0,0)$, $(2,2,0,0)$, $(2,1,1,0)$ and $(1,1,1,1)$. Thus the table covers every maximum base.

For each type, every pairing with the stated hole count either appears
in the matching domain or is excluded by a verified local reduction. The local exclusions are checked by
exact coefficient identities or certified lower-rank real Gram matrices.
The identities use integers, positive rational weights or $\Q(\sqrt3)$.
For the real Gram matrices, exact rational bounds keep the principal
block positive definite and make an invertible-preconditioned Schur map
a strict contraction of a closed box into itself. Its fixed point has
zero Schur complement, giving a positive-semidefinite matrix of the
smaller principal rank. The checks use exact LDL decompositions and
rational arithmetic; rationality of the final matrix entries is not assumed. The maximum-retaining seven-row, five-column deletion clauses use
$\zt(6,5)=20$ (Theorem~\ref{thm:small}, transposed).
The four-column inputs also use $\zt(6,4)=16$ from
\cite[Corollary~4.23]{LQ} and $\zt(7,4)=19$, $\zt(8,4)=21$ from
\cite[Theorems~4.2 and~4.4]{XY}; the ten-row input uses the nine-row
bound only after that separate proof is complete. Only the $10\times5$ formula uses lexicographic constraints: all $120$
column permutations, with their induced permutations of the pair rows,
act on its matching variables. A hypothetical irreducible display and
every image under this group satisfy every matching clause and every
reduction clause: each reduction is certified independently of the lex
constraints. The lexicographically least image therefore satisfies both the
reduction clauses and the inclusive lex comparisons; the reduction-clause
list need not itself be invariant.
Independent input reconstruction and
complete clausal refutations exclude every target in the table. The
lower certificates and corresponding exact upper-bound packets are
indexed in Appendix~\ref{app:data}. The four-column recursive values
were already known from~\cite{CC}; here the new conclusion is their
actual-SOS optimality.
\end{proof}

The value $\zt(10,5)=34$ also answers the five-column case of
\cite[Section~8, Open Problem~1]{CLQIncidence}.

\section{A uniform six-column construction}\label{sec:sixconstruction}

The remaining part of Theorem~\ref{thm:six} is a construction for every
$m\ge15$. All coordinates in the following recipes are zero-based.
Start from the $15\times6$ witness in \cite[Figure~4]{CCQ}, preserving
its 30 singleton cells. Replace four of its selected pairs by chains:
\begin{center}
\begin{tabular}{ccccc}
\toprule
Track&Left cell&Right cell&Outgoing/incoming columns&Singleton column\\
\midrule
0&$(10,3)$&$(12,4)$&$(3,4)$&5\\
1&$(0,2)$&$(8,5)$&$(2,5)$&1\\
2&$(4,0)$&$(13,2)$&$(0,2)$&3\\
3&$(10,1)$&$(11,4)$&$(1,4)$&0\\
\bottomrule
\end{tabular}
\end{center}
Write these columns as $(a_i,b_i)$ and the new row on track $i$ in
block $k$ as $u_i(k)=15+4(k-1)+i$. With $u_i(0)$ and $u_i(t+1)$
the actual endpoint rows, select
\begin{equation}\label{eq:sixchain}
 Q_i(k)=\{(u_i(k),a_i),(u_i(k+1),b_i)\},\qquad0\le k\le t.
\end{equation}
In each block, place its four singletons at the listed columns and
same-column bridges between track pairs $01,02,03,12,13,23$ in
columns $0,1,2,4,3,5$, respectively. The normalized block is
\begin{equation}\label{eq:sixblock}
\begin{array}{c|cccccc}
 &0&1&2&3&4&5\\\hline
0&B_{01}&B_{02}&B_{03}&Q_0(k)&Q_0(k-1)&S_0\\
1&B_{01}&S_1&Q_1(k)&B_{13}&B_{12}&Q_1(k-1)\\
2&Q_2(k)&B_{02}&Q_2(k-1)&S_2&B_{12}&B_{23}\\
3&S_3&Q_3(k)&B_{03}&B_{13}&Q_3(k-1)&B_{23}
\end{array}
\end{equation}
where bridge and singleton indices $k$ are suppressed. Every cell is
occupied exactly once, and every added singleton row has degree one.

\subsection{Uniform certificate principles}\label{sec:certificate-principles}

We make explicit what permits a finite certificate to establish an
arbitrary-length family. A chain whose consecutive pairs use fixed
outgoing and incoming columns has the following wave property: once
one selected pair is identified, its representative in the outgoing
column grounds the companion of the next selected diagonal. One
rectangle identifies that diagonal. Reverse propagation uses the
incoming column. At either end the argument uses the actual core cell,
not a fictitious extra module row.

A finite collar consists of the core and a fixed number of first and
last blocks. A cell paired with one outside the collar retains its
full-family class and is not treated as a hole.

\begin{lemma}[Lifting symbolic certificates]\label{lem:certificate-lifting}
Consider a family of finite displays indexed by an integer $t\ge t_0$.
A finite list of record templates certifies the family if the following
conditions hold on every template's entire integer domain:
\begin{enumerate}[label=(\roman*)]
\item Each record specifies cells and classes by affine integer
expressions in $t$ and local indices. Cell expressions evaluate to cells of the display, and class indices
refer to its prescribed classes. Occupancy, membership, coincidences and
distinctions hold throughout the domain. Every rectangle used has distinct rows and columns.
\item Each instantiated record is a valid line, hole, rectangle,
complementary-pair or saturation inference from Section~\ref{sec:prelim},
or an application of an already
proved lemma. Each noninitial premise is the conclusion of an earlier instantiated
record in an explicit well-founded order.
\item The conclusions cover all selected-pair identifications and
all orthogonality relations between distinct final classes.
\end{enumerate}
Then every display in the family satisfies $\RW$.
\end{lemma}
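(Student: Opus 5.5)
Fix $t\ge t_0$ and show that the display $(F_t,D_t)$ satisfies $\RW$ by assembling its recursive derivation directly from the instantiated records.

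\emph{Step 1: instantiation.} Let $\mathcal R_t$ be the set of all instances of the templates whose local indices lie in the template's integer domain at this $t$. By (i), each instance names actual cells of the $t$-th display and actual classes among its prescribed classes. The occupancy, membership, coincidence and distinction facts that an inference needs are exactly those asserted in (i). So each instance is a well-formed statement about this display, and every rectangle it uses is genuine.

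\emph{Step 2: order the records.} By (ii) there is a well-founded order on the instances in which every noninitial premise is the conclusion of an earlier instance. Since the display is finite and each record concludes one coefficient relation, only finitely many distinct conclusions occur. I would therefore extract a finite subset closed under premises. Take the premise-closure of one witnessing instance for each needed conclusion; well-foundedness makes it finite by König-type reasoning, since each record has finitely many premises. Then choose a linear extension of the order on this subset.

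\emph{Step 3: induction.} Along the linear extension, show that each conclusion belongs to the $\RW$ closure of $(F_t,D_t)$. Initial records are line or hole grounds, which are base cases of the closure. A rectangle transfer, saturation or complementary-pair record uses premises already in the closure, and its instance is a legitimate rule application by (ii). For an application of an already proved lemma, the conclusion is in the closure by that lemma's own derivation. The point is that these rules are exactly the closure rules of Section~\ref{sec:prelim}. One detail needs care: a cell paired outside the collar keeps its full-family class. Affine class indices must therefore refer to the prescribed classes of the whole display, which (i) guarantees.

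\emph{Step 4: acceptance.} By (iii), the closure then contains every selected-pair identification and every orthogonality between distinct final classes. This is the definition of $\RW$ acceptance, equivalently a complete certificate.

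The main obstacle is Step 2, where the premise structure must be genuinely acyclic for each fixed $t$ and not merely template-wise. I would use the explicit order in (ii) over instantiated records, rather than over templates, so that wave chains and their reverse propagation cannot create circular dependencies.
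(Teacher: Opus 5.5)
Your proposal is correct and follows the paper's proof. You fix $t$, use (i) to make each instantiated record a statement about that finite display, run well-founded induction along the order in (ii) so that each conclusion comes from a coefficient rule or a cited lemma, and read off $\RW$ acceptance from (iii). The finite premise-closed extraction and linear extension in your Step~2 are harmless but unnecessary, since well-founded induction applies directly to the instantiated records.
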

\begin{proof}
Fix $t\ge t_0$. Condition (i) makes every instantiated record a statement
about this finite display. Well-founded induction using (ii) proves each
record from the coefficient rules or the cited lemma. Condition (iii)
gives the terminal conditions of $\RW$. Since $t$ was arbitrary, the conclusion holds throughout the family.
\end{proof}

In the certificates below, the domain checks are exact integer equalities,
inequalities and case splits. The orders are given by successive
normalization stages, decreasing distances between indices, or distances
to a verified boundary. For mixed products, a graph vertex specifies a
core class and a module type; an edge specifies a genuine rectangle and
an integer shift of the module index. A path terminates at a known ground
or an already certified boundary class. A closed walk with nonzero total
shift can be iterated only after checking that every exit reaches such a
boundary. The written recurrences below handle the exceptional
components; the remaining record tables and domain checks are in the
companion. This is a symbolic computer-assisted proof, not an inference
from a finite sample of lengths.

\subsection{Normalization and all-index orthogonality}

For the $15+4t$ residue, four core/end-block certificates initiate the
chains in order $1,2,3,0$, using 44, 28, 13 and 19 primitive facts.
The wave argument completes each track before it is used in the next
stage. A further 118-fact certificate identifies the 26 retained core
pairs. Bridges identify by the column rule. These windows are constant
as coefficient systems for all $t\ge3$.

After normalization, seven short symbolic rectangle words establish
the boundary-chain families
\[
 Q_1(0)\perp Q_0(k),\quad Q_0(t)\perp Q_2(k)
 \quad(0\le k\le t),\qquad
 Q_1(0)\perp Q_3(t),\quad Q_2(t)\perp Q_3(t).
\]
They terminate at actual line grounds and use no core orthogonality
conclusion. The module certificate then combines inward index
transfers, near-diagonal windows, boundary/interior paths and a
two-end window. Its verified domains cover every pair of module
classes for every $t\ge17$. The inward steps strictly decrease the
distance between block indices; the bounded-distance and endpoint
cases are separate finite templates.

There are 56 old nonport classes and 14 module phase types. Of the
784 mixed states, 782 have paths of at most ten transfers, with block
shifts in $[-2,3]$, to a line or a previously established module ground.
These paths apply for $3\le k\le t-3$. The two exceptions have a
common recurrence. Write $F=v_{1,0}$ for the singleton coefficient vector
and $E=v_{1,1}=v_{1,4}$ for the normalized coefficient vector of the
retained core pair. With
\[
 X_k=\langle F,Q_3(k)\rangle,\qquad
 Y_k=\langle E,S_3(k)\rangle,
\]
a fixed three-rectangle word gives $X_0=0$, and the block gives
\begin{equation}\label{eq:sixexception}
 X_{k-1}=-Y_k=X_k\qquad(1\le k\le t).
\end{equation}
Hence both entire exceptional families vanish. A core plus first-five
and last-five block patch covers the remaining old/old and boundary
targets. Its 6,282 facts cover 5,684 required targets; its affine
geometry is valid for $t\ge11$. For $t\ge17$ the interior and boundary
ranges cover every index. Irrelevant cut-pair variables are not assumed
zero. Every one of the 17 shorter lengths is checked separately.

\subsection{The other three residues}

For the $17+4t$ family, add two core rows with singleton column 0 and
the five pairs
\[
 (15,1)(15,2),\quad(15,3)(15,4),\quad(16,1)(16,5),\quad
 (15,5)(16,3),\quad(16,2)(16,4).
\]
The module now starts after row 16, with the same original four ports.
No added pair crosses to the old core. The two-row subsystem has its
own seven-form certificate. The enlarged mixed graph has 882 states,
880 ordinary paths and exactly the same two exceptions
\eqref{eq:sixexception}. Its core/boundary patch and the same index
coverage complete this residue.

To either odd residue, attach one row with singleton in column 1, a
hole in column 0, and pairs in columns $(2,5)$ and $(3,4)$.
The finite boundary certificates move this last row to core position
15 or 17 and shift the module rows accordingly. This row permutation
preserves every selected support; it is not an additional attachment.
For the $16+4t$ family, 40 intrinsic phase paths ground the ordinary
new/old products. If the new singleton is $F'$ and its $(2,5)$ pair
is $A'$, the two exceptions satisfy
\[
 \langle F',Q_1(k-1)\rangle
   =-\langle A',S_1(k)\rangle
   =\langle F',Q_1(k)\rangle.
\]
The actual right endpoint supplies a seed through the retained pair
with cells $(8,1),(9,2)$, which shares column 2 with $A'$.
A boundary patch uses this seed and only already proved facts of the
$15+4t$ family, so the argument is not circular.

For $18+4t$, the $17+4t$ subsystem, the $16+4t$ subsystem, and the fixed
three-row extra subsystem jointly cover every selected class pair.
No selected support is cut, so their certificates embed. The four
residue counts are
\begin{center}
\begin{tabular}{rrrrr}
\toprule
$m$&$|F|$&$|D|$&$H$&$N$\\
\midrule
$15+4t$&$30+4t$&$30+10t$&0&$60+14t$\\
$16+4t$&$31+4t$&$32+10t$&1&$63+14t$\\
$17+4t$&$32+4t$&$35+10t$&0&$67+14t$\\
$18+4t$&$33+4t$&$37+10t$&1&$70+14t$\\
\bottomrule
\end{tabular}
\end{center}
The uniform proofs apply for $t\ge17$, and the 68 shorter configurations
are covered by finite certificates. In every case $|F|=m+15$ and the
display attains $U_6(m)$. This proves the infinite part of
Theorem~\ref{thm:six}.

For $m=2,3,4$, transposition and the prior two-column identity,
three-column theorem and $6\times4$ equality give $7,12,16$
\cite{LQ}. Theorem~\ref{thm:small} supplies $m=5,6,7$.
For $8\le m\le14$, \cite[Corollary~4.2]{CCQ} gives the actual cap,
which, by Lemma~\ref{lem:ordinarysix}, simplifies to
$\fl{(15m+15)/4}$. All parts of
Theorem~\ref{thm:six} now follow.

The least eventual threshold for equality to the cell cap $U(m,6)$ is
$8$: at $m=7$ the common value is $28$ and the cap is $30$.
The formula $\lfloor(7m+15)/2\rfloor$ starts at $m=14$, where the value
is $56$; at $m=13$ it gives $53$ instead of $52$.

\section{The odd-width families}\label{sec:otherwidths}

The width-five formula in Theorem~\ref{thm:otherwidths} follows from
\cite[Theorem~5.4 and Corollary~5.5]{XiChangFive} and the parameter hierarchy.
The remaining three families share a one-row module. Their finite
interfaces are interpreted using the certificate principles of
Section~\ref{sec:certificate-principles}.

A related distance-reducing rectangle argument is given in
\cite[Lemma~4.2]{XiChangFive}.

\begin{lemma}[Parallel one-row chains]\label{lem:parallel}
In real module rows $k=1,\ldots,t$, $t\ge1$, place $Q_i(k+1)$ in
column $a_i$ and $Q_i(k)$ in column $b_i$, for disjoint column pairs
$(a_i,b_i)$. An optional singleton $S(k)$ occupies one further column.
The first and last classes have their other representatives in the core,
in columns $a_i$ and $b_i$, respectively.
After the target tracks normalize, all distinct module classes are
recursively orthogonal.
\end{lemma}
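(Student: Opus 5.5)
Write the module row $k$ as carrying, in column $a_i$, the cell of $Q_i(k+1)$ and, in column $b_i$, the cell of $Q_i(k)$. After normalization every track class $Q_i(j)$ has one representative vector, so a module class is either some $Q_i(j)$, $1\le j\le t+1$ (the two endpoint classes $Q_i(1)$, $Q_i(t+1)$ being half in the core), or a singleton $S(k)$. The plan is to prove every product between two distinct module classes by a distance-reducing rectangle induction, in the spirit of the index move \eqref{eq:universaldrift} in Theorem~\ref{thm:extension} and of the foreign-column argument in Lemma~\ref{lem:column-fans}. The order is: same-track products first, then cross-track products, then products with the singletons, with induction on the index gap $|j-l|$ in each case.

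\emph{Same track.} $Q_i(j)$ and $Q_i(l)$ both have representatives in column $a_i$ (rows $j-1$ and $l-1$, with row $0$ meaning the core), so they are grounded by the line rule. \emph{Singletons.} $S(k)$ and $S(l)$ share their column. For $S(k)$ against $Q_i(j)$, we take the representative of $Q_i(j)$ in row $j$ (column $b_i$). If $j=k$ this is a row ground. Otherwise the rectangle on rows $k,j$ and columns $c_S,b_i$ transfers the product to $\langle v_{(k,b_i)},v_{(j,c_S)}\rangle=\langle Q_i(k),S(j)\rangle$. A second rectangle through column $a_i$ moves $Q_i(k)$, via its representative in row $k-1$, to a product whose row gap is smaller by two; this is exactly the $(k,l)\mapsto(k+1,l-1)$ drift. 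The gap terminates at $0$ or $1$, where a row ground applies. At the ends it terminates at a core cell, and there the first and last representatives lie in the prescribed columns $a_i$, $b_i$, which is the hypothesis of the lemma. The cross-track case $\langle Q_i(j),Q_{i'}(l)\rangle$ is handled identically, using the rectangles in columns $\{b_i,b_{i'}\}$ and then $\{a_i,a_{i'}\}$. Each pair of such rectangles replaces $(j,l)$ by an index pair whose difference has decreased, and equal rows give a line ground. We order all targets by $|j-l|$ and then by distance to the boundary, which makes the induction well founded in the sense of Lemma~\ref{lem:certificate-lifting}(ii).

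The main obstacle is the bookkeeping at the boundary and for odd gaps. We must check that every rectangle used is genuine, with distinct rows and columns; this holds because the column pairs are disjoint and module rows are distinct. We must also check that a companion cell landing in the core is either a hole, which gives a hole ground, or a representative of the same endpoint class, which uses the normalization hypothesis. Odd gaps do not reach $0$ but stop at $1$, and there the two cells $(k,a_i)$ and $(k+1,b_i)$ in adjacent rows belong to the same class $Q_i(k+1)$, so the transferred product becomes a line product. I would first verify this parity case explicitly, since it is where a sign-reversing cycle, like the exceptional recurrence \eqref{eq:sixexception}, could otherwise appear. Finally, no module/core product is claimed; that is left to the interface certificates.
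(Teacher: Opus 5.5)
\textbf{There is a genuine gap in the cross-track case, and a smaller orientation error in the singleton case.} Your overall plan matches the paper's: rectangle transfers that reduce the index gap, closed off by row grounds at small gaps.

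\emph{Cross-track case.} The rectangles you name make no progress. Since $Q_i(j)$ occupies $(j-1,a_i)$ and $(j,b_i)$, a rectangle in columns $\{b_i,b_{i'}\}$ on rows $j,l$ gives $\langle Q_i(j),Q_{i'}(l)\rangle\leftrightarrow\langle Q_i(l),Q_{i'}(j)\rangle$. A rectangle in columns $\{a_i,a_{i'}\}$ likewise only exchanges the two indices between the tracks. Applied in succession, they return you exactly to $\langle Q_i(j),Q_{i'}(l)\rangle$. The index difference never changes, so your claim that each pair of such rectangles decreases it is false, and this induction does not terminate.

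The missing move is a \emph{mixed} outgoing/incoming rectangle. For $q>p+1$, take rows $q-1,p$ and columns $a_i,b_{i'}$. This gives $\langle Q_i(q),Q_{i'}(p)\rangle\leftrightarrow\langle Q_i(p+1),Q_{i'}(q-1)\rangle$ in a single step and lowers $|q-p|$ by two. Gaps $0$ and $1$ are row grounds, using row $t$ when $q=p=t+1$.

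\emph{Singleton case.} Your composite, the incoming swap followed by the outgoing rectangle of $Q_i(k)$, sends $\langle Q_i(j),S(k)\rangle$ to $\langle Q_i(j+1),S(k-1)\rangle$. This shrinks the gap only when $j<k$. For $j>k$ it enlarges the gap by two; for example, $\langle Q_i(5),S(2)\rangle$ goes to $\langle Q_i(6),S(1)\rangle$. In that direction you must apply the outgoing rectangle first. Note also that $j=k+1$ is already a row ground, whereas $j=k-1$ is not, so ``gap $1$'' must be read with its sign.

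\emph{Boundary.} At $j=t+1$, your ``representative in row $j$'' is the core cell of $Q_i(t+1)$. The transferred diagonal then pairs $Q_i(k)$ with an arbitrary core cell in the singleton column, about which the hypotheses say nothing. Likewise, your assertion that a core companion is either a hole or the same endpoint class is unsupported.

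The paper avoids the core altogether. The larger index always uses its outgoing representative, in row $q-1\le t$, and the smaller index its incoming one, in row $p\ge1$. Every rectangle therefore lies in module rows $1,\ldots,t$, even at the endpoints. For singletons, the paper grounds $q=p,p+1$, uses one incoming swap when $q<p$, and then applies the same gap reduction.
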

\begin{proof}
Classes on the same track share a column. For distinct tracks, put
$U(q,p)=\langle Q_i(q),Q_j(p)\rangle$. Equal or adjacent indices
have a common module row; the case $q=p=t+1$ uses outgoing
representatives in row $t$. If $q>p+1$, an outgoing/incoming rectangle
gives
\begin{equation}\label{eq:paralleldrift}
 U(q,p)\longleftrightarrow U(p+1,q-1),
\end{equation}
reducing the absolute gap by two. The symmetric move handles $p>q+1$.
The larger index uses its outgoing representative first, so every row
lies in $1,\ldots,t$, even at the upper endpoint. For $Q_i(q)S(p)$, the row grounds are $q=p,p+1$; an incoming swap
handles $q<p$, followed where needed by the same gap reduction.
Distinct singleton classes share a column. Only tracks appearing in
the target are needed, so the lemma is available during staged
normalization without presuming other tracks.
\end{proof}

\subsection{Widths seven and nine}

For widths seven and nine, the cores and new row patterns are
as follows. An entry $(a_i,b_i)$ specifies the outgoing/incoming
columns, and $s$ the singleton column.
\begin{center}
\begin{tabular}{cccccc}
\toprule
$n$&Core rows&Core $(|F|,|D|,H)$&$s$&Track columns&$N$ after $t$ rows\\
\midrule
7&25&$(46,64,1)$&0&$(1,2),(3,4),(5,6)$&$110+4t$\\
9&36&$(72,126,0)$&0&$(3,5),(2,4),(6,1),(8,7)$&$198+5t$\\
\bottomrule
\end{tabular}
\end{center}
The full core lists and port indices are in the corresponding data
packets. Replacing each port by its chain preserves every other pair.
Each new row is full and has one singleton, so the bases attain
$m+\binom n2$ and their hole counts stay $1,0$. The resulting
lengths are the width-seven and width-nine rows of
Theorem~\ref{thm:otherwidths}.

For $t\ge5$, retain the core and the first two and last two module
rows. The cut classes $Q_i(3)$ and $Q_i(t-1)$ are distinct and occupied.
Staged finite certificates, with Lemma~\ref{lem:parallel} used only
after its target tracks normalize, identify the whole core and all
chains. The remaining mixed graphs have the following complete counts.
\begin{center}
\begin{tabular}{rrrr}
\toprule
$n$&Core-represented classes&Mixed vertices&Components\\
\midrule
7&113&452&20\\
9&202&1,010&30\\
\bottomrule
\end{tabular}
\end{center}
Splitting a port gives its two cells distinct classes, both with core
representatives. At both widths, every component has a grounded spanning tree. The width-nine collar
uses 64 staged transports, each on two consecutive real rows with
verified net shift one and only previously normalized target tracks.
The resulting repetitions are valid throughout the module.

Each lifted mixed path either reaches its declared ground or reaches
an actual first/last chain class, where the established core clique
or the normalized diagonal prescription applies. The affine collar checks exclude accidental coincidences of omitted
mates over the full domain $t\ge5$, including its boundary $t=5$.
At each width, the five cases $0\le t<5$ are separately certified.

At width seven, separate actual-cell certificates cover rows 19 through
24. Their maximum singleton counts are 39, 40, 42, 43, 44 and 45, and
their displayed lengths are $4m+10$. Ordinary optimality follows from
$2|F|-3m\le21$ at row 19, the strict subterminal bound of
Lemma~\ref{lem:cap} at row 20, and the terminal formula thereafter.
Together with the uniform family from row 25, this proves the claimed
range. At row 18 the degree bound gives $|F|\le37$ and hence
$\zt(18,7)\le81<82$, proving the least linear threshold 19.
At width nine the new 36-row core starts the uniform family directly.
At row 35, Lemma~\ref{lem:cap} gives $|F|\le70$ and therefore
$\zt(35,9)\le192<193$. Its least linear threshold is consequently 36.

\subsection{Eleven columns}
The direct $55\times11$ core has 110 singletons, 247 pairs and the
single hole $(19,9)$. The outgoing/incoming column pairs of
$Q_0,\ldots,Q_4$ are $(1,6),(8,3),(7,9),(5,10),(4,2)$, with
singleton column 0. Adding $t$ one-row modules gives
$|F|=110+t$, $|D|=247+5t$ and $N=357+6t=6m+27$ at $m=55+t$.

The staged collar preserves the genuine hole and all occupied cut
mates. The staged proof uses 66 unit-shift transports and establishes
normalization and the 362-class source clique before mixed closure. Twenty-six mixed
components have direct grounds. The remaining two use the normalized
coefficient vectors
\[
\begin{aligned}
 F_3&=v_{6,0},& D_3&=v_{6,5}=v_{6,10},\\
 F_4&=v_{7,0},& D_4&=v_{7,2}=v_{7,4}.
\end{aligned}
\]
Here each $D_j$ is the class of an actual same-row selected pair.
Rectangles in core row 6 or 7 and the real module row $54+k$ give
\begin{equation}\label{eq:elevenrecurrence}
 \langle F_j,Q_j(k)\rangle
   =-\langle D_j,S(k)\rangle
   =\langle F_j,Q_j(k+1)\rangle,
 \qquad j=3,4,\quad1\le k\le t.
\end{equation}
The initial classes are $Q_3(1)=v_{11,5}$ and $Q_4(1)=v_{43,4}$.
They are distinct from $F_3,F_4$, respectively, so the previously
proved source clique supplies both zero seeds. Induction exhausts
these two components, including the terminal chain index $t+1$.
These recurrences handle the two exceptional components. Exact affine checks cover
all $t\ge5$, and the five shorter cases are separately certified.
The maximum base and cell cap prove equality for every $m\ge55$.
Two additional full displays at $m=53,54$ have
$(|F|,|D|,H)=(107,238,0),(108,243,0)$ and lengths $345,351$.
Their bases consist of one triple row and the 52 pair rows not covered
by that triple, with one extra singleton row at $m=54$. Their pair
budgets give $2|F|-3m\le55$, proving ordinary optimality. Explicit
coefficient certificates establish $\RW$ for both displays and bridge
the range to the uniform family from $m=55$.
At $m=52$, the same bound gives $|F|\le105$ and hence
$\zt(52,11)\le338<339=6\cdot52+27$. Thus 53 is the least threshold.
Packets O11 and O11\_BRIDGE contain the uniform proof and the two finite
certificates.

\section{Signed exclusions and small-width equality}\label{sec:signed-small}
\subsection{Four columns}
\begin{proposition}\label{prop:four-signed}
One has $\zr(11,4)=\zs(11,4)=29$ and $\zr(12,4)=\zs(12,4)=32$.
\end{proposition}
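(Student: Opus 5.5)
The plan is to establish the lower bounds by recursive-line witnesses and the upper bounds by excluding signed displays one above each value. For the lower bounds we can cite the known recursive values at $11\times4$ and $12\times4$ from \cite{CC}. Alternatively, we can exhibit explicit displays with actual-cell $\RW$ certificates of the kind used in Theorem~\ref{thm:small}. Since $\zr\le\zs$, it then suffices to show $\zs(11,4)\le29$ and $\zs(12,4)\le32$.

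\textbf{Base coverage.} By Lemma~\ref{lem:cap}, since $m\ge\binom42=6$, every maximum base consists of the six pair rows and $m-6$ singleton rows. It is determined up to isomorphism by the anchor multiplicities of the singleton rows. These are the partitions of $5$ (respectively $6$) into at most four parts, exactly as in the proof of Theorem~\ref{thm:further-finite}. We have $|F|=m+6$, so the cell caps are $U(11,4)=\lfloor(44+17)/2\rfloor=30$ and $U(12,4)=\lfloor(48+18)/2\rfloor=33$. Thus each target is exactly the cap. It has hole count $H=1$ at $11\times4$ and $H=0$ at $12\times4$, by \eqref{eq:cellidentity}.

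\textbf{Exclusion by matching formulas.} We encode each target as a propositional matching problem, as in Section~\ref{sec:finite}.
\begin{itemize}
\item Variables choose selected pairs among the free cells, with hole variables and cardinality constraints.
\item Unary filtering removes pairs failing the tests of Theorem~\ref{thm:unary}. By Corollary~\ref{cor:unaryfilter} this is sound for $\SG$, since $\SG$ implies irreducibility.
\item Reduction clauses forbid subfamilies that are not $\SG$-admissible. By Lemma~\ref{lem:heredity}, deleting selected pairs preserves $\SG$, so a signed-rejected complete subfamily of pairs with the full base cannot occur inside an accepted display. Irreducibility-based reductions (explicit shorter SOS) are also valid, because $\SG$ is sound.
\end{itemize}
We add guarded row-deletion cuts. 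Deleting singleton rows leaves a maximum base of the smaller rectangle, so the retained display is $\SG$-accepted by Lemma~\ref{lem:heredity}. Its length is then bounded by $\zs$ at fewer rows; here we use the already known $\zt(8,4)=21$, $\zt(9,4)=24$ and $\zt(10,4)=27$ from Theorem~\ref{thm:further-finite} and \cite{XY}. For $12\times4$ we additionally use $\zs(11,4)\le29$ once it is proved, keeping the dependence acyclic. A DRAT refutation of each formula, checked independently, then finishes the proof.

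\textbf{Main obstacle.} The hard part is that $\zt(11,4)$ and $\zt(12,4)$ are presumably not excluded at the cap, so irreducibility alone will not give unsatisfiability. The formula must therefore include signed rejections of small pair subfamilies. The first thing I would try is to enumerate all pair subsets of size at most three or four on each base, run the $\SG$ closure on each with the full base, and add a clause for every rejection that survives the subset-minimality check. If this is insufficient, I would add rejections of larger subfamilies generated lazily: from each satisfying assignment, run $\SG$ on the whole display, extract a minimal rejected subset by greedy deletion (valid by Lemma~\ref{lem:heredity}), add it as a clause, and iterate until the formula is UNSAT.
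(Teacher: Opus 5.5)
Your outline is correct, and its mathematical skeleton matches the paper's. Both use the lower bounds $29$ and $32$ from \cite{CC}. Both classify maximum bases by anchor multiplicities (six and nine types), and both note that the caps $30$ and $33$ force one hole and no hole respectively. Both then run an exhaustive exclusion of signed displays at the cap, pruned by heredity.

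The difference is the exclusion engine. For this proposition the paper builds no propositional formula. It runs a direct branch-and-bound search on each base, branching on a remaining cell as either a hole or paired with each possible partner. The search prunes only by failed signed subfamilies, a matching-capacity bound, and a swap of identical untouched rows. The result is validated by an independent reimplementation that reproduces the node counts, roughly $1.9\times10^8$ and $3.8\times10^8$ states.

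Your route (counterexample-guided cut generation plus a DRAT refutation) is essentially the method the paper reserves for $11\times5$ and $8\times7$ in Theorem~\ref{thm:signed-exclusions}. It gives a clausal certificate checkable by an independent proof checker. The appendix concedes the S4 bounds can only be reproduced by rerunning the enumerations. The cost is that every cut must be re-audited as a genuinely failed signed display, and there may be millions of them; the $11\times5$ run needed over five million. Termination is not an issue, since each iteration excludes at least the current matching. Your additional cuts (unary filtering, shorter-SOS reductions, guarded row deletion) are sound but not needed for the signed exclusion.

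One citation point needs care. Lemma~\ref{lem:heredity} asserts $\SG$ heredity only for deleting selected pairs with the base fixed. Your $12\times4$ row-deletion cut that invokes $\zs(11,4)\le29$ therefore needs a short extra argument. Once the pairs meeting the deleted singleton rows are removed, those rows contain only singletons and holes. Then every rectangle edge incident with a retained class pair stays inside the retained rows, so the signed closure restricts to the smaller rectangle. The cuts based on $\zt(8,4)$, $\zt(9,4)$ and $\zt(10,4)$ need only irreducibility heredity and are fine as stated.
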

\begin{proof}
For $m\ge6$, pair counting gives $z(m,4)=m+6$. Equality forces six
rows containing the six distinct column pairs and $m-6$ singleton rows.
The nonincreasing quadruples of anchor multiplicities classify all maximum
bases up to row and column permutations. There are six types at eleven
rows and nine at twelve rows. The respective cell caps are 30 and 33.
Complete signed enumeration excludes both targets:
\begin{center}
\begin{tabular}{lrrr}
\toprule
Grid&Maximum-base types&Target pairs&Search states\\
\midrule
$11\times4$&6&13&189,363,352\\
$12\times4$&9&15&379,946,285\\
\bottomrule
\end{tabular}
\end{center}
Every base is exhausted, with one hole in the eleven-row target and none
in the twelve-row target. The per-base inputs and counts are recorded in S4.

The search branches on a remaining
cell: either it is a hole if the remaining budget permits, or it is paired
with each possible partner. Failed signed subfamilies are excluded by complete-pair heredity.
Deleting a complete displayed form grounds all rectangle deviations incident
with its removed cells, so the signed derivation on retained cells replays.
A matching-component bound rejects only insufficient remaining capacity.
The only symmetry reduction swaps two wholly untouched rows with identical
fixed supports, while fixing the pivot, selected pairs and chosen holes.
Thus every branch omission preserves a representative of every completion.

The independent reader matches the node and depth counts of the archived
signed union-find search. It instead
rebuilds a parity graph, colors its components by breadth-first search,
and grounds components containing a zero, an odd cycle, or opposite selected
inequalities. Only a grounded selected-pair deviation enables its
identification equations. The algorithms use the signed criterion throughout;
a recursive-line rejection is never a pruning condition. The source audit,
complete inputs, calibration cases and termination logs are in packet S4.

The recursive lower values 29 and 32 were already obtained in \cite{CC}.
Explicit twelve- and fourteen-pair witnesses are also replayed by a separate
actual-cell rule reader in S4. The hierarchy gives the asserted equalities.
These are signed upper bounds. The actual-SOS intervals remain
$29\le\zt(11,4)\le30$ and $32\le\zt(12,4)\le33$.
\end{proof}

\begin{corollary}\label{cor:four-exception}
For every $m\ge1$ other than possibly $m=14$,
$\zr(m,4)=\zs(m,4)$.
\end{corollary}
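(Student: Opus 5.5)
The corollary is a case analysis over the row count $m$. Since $\zr\le\zs\le\zt$ by \eqref{eq:hierarchy}, at each $m$ it suffices to have either an exact value for $\zt$ or a signed upper bound matching a recursive-line lower display. I would split the range $m\ne 14$ into small rows, the finite window, and the tail.

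\emph{Small rows.} For $m\le4$, transpose to width $m\le4$. Widths one and two have the known exact equalities of \cite{LQ}. Width three uses the three-column theorem of \cite{LQ}, and the $4\times4$ value is likewise in \cite{LQ}. For $m=5,\dots,8$, I would cite \cite[Corollary~4.23]{LQ} for $\zt(6,4)=16$, and \cite[Theorems~4.2 and~4.4]{XY} for $\zt(7,4)=19$ and $\zt(8,4)=21$. The value at $m=5$ would come from the same source or from transposition of the four-by-five value in \cite{LQ}. In every one of these cases $\zr=\zt$, so $\zs$ is squeezed and the equality follows.

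\emph{The finite window.} At $m=9,10$, Theorem~\ref{thm:further-finite} gives $\zr=\zs=\zt$ directly. At $m=11,12$, Proposition~\ref{prop:four-signed} gives $\zr=\zs$. At $m=13$, I would combine the recursive lower value from \cite{CC} with either a signed exclusion of the same kind or an actual cap. The cap is $U(13,4)=\fl{(52+19)/2}=35$, so what is needed is a recursive $35$-square witness from \cite{CC}. If instead the value there is $34$, this case needs a signed search analogous to S4. I expect settling $m=13$ without invoking a new enumeration to be the main obstacle. My first attempt would be to check whether \cite{CC} attains $U(13,4)$ there.

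\emph{The tail.} For $m\ge15$, the known four-column family of \cite{CC} is the one used as the starting point in the proof of \eqref{eq:fixedslope}. It has zero or one hole. By \eqref{eq:cellidentity}, a display with $H$ holes has length $(mn+|F|-H)/2$. Holes match the parity of $m$: a count of $0$ or $1$ attains $U(m,4)=\fl{(5m+6)/2}$. Since $\zt\le U$ by Lemma~\ref{lem:cap}, all three parameters coincide for every $m\ge15$. That leaves only $m=14$, where the cap is not known to be attained recursively and no signed exclusion is available. This is exactly the stated exception.
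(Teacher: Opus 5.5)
Your case split is the paper's: exact values from \cite{LQ,XY} through eight rows, Theorem~\ref{thm:further-finite} at nine and ten rows, Proposition~\ref{prop:four-signed} at eleven and twelve, and attainment of the cell cap by the recursive family of \cite{CC} for $m\ge15$. Your parity count of holes correctly shows why that family meets $U(m,4)$.

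The one case you left open, $m=13$, is closed by your first attempt. The paper cites \cite[Table~1]{CC} for a recursive display attaining the cell cap $U(13,4)=35$, so thirteen rows are handled exactly like the tail, and no new signed enumeration is needed. As written, your $m=13$ case is conditional rather than settled; stating this citation completes it.
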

\begin{proof}
At one row every cell is fixed. The exact values for two through eight rows
are in \cite{LQ,XY};
Theorem~\ref{thm:further-finite} covers nine and ten rows, and
Proposition~\ref{prop:four-signed} covers eleven and twelve.
At thirteen rows and every $m\ge15$, the recursive constructions
in \cite[Table~1 and Theorem~1.1]{CC} attain the common cell cap. At fourteen rows, the
remaining interval is $37\le\zr\le\zs\le38$.
\end{proof}

\subsection{Signed exclusions at \texorpdfstring{$11\times5$}{11x5} and \texorpdfstring{$8\times7$}{8x7}}\label{sec:signed-exclusions}

\begin{theorem}\label{thm:signed-exclusions}
One has
\[
 \zr(11,5)=\zs(11,5)=37,\qquad \zr(8,7)=\zs(8,7)=38 .
\]
\end{theorem}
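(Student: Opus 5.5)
The plan follows the pattern of Proposition~\ref{prop:four-signed}. By the hierarchy \eqref{eq:hierarchy}, it suffices to do two things: exhibit recursive-line lower witnesses of lengths $37$ at $11\times5$ and $38$ at $8\times7$, and exclude by complete signed enumeration any $\SG$-admissible limited display of length $38$ at $11\times5$ and $39$ at $8\times7$. No statement about $\zt$ is claimed, so the upper bounds only need signed rejections, not actual shortenings.

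\emph{Ordinary bases and caps.} At $11\times5$ we have $m<\binom52$, and pair counting with $\binom d2\ge2d-3$ gives $|F|\le\fl{(3\cdot11+10)/2}=21$. In fact the strict subterminal bound of Lemma~\ref{lem:cap} gives $|F|\le 11+10-1=20$. We then compare with a construction (for example, ten pair rows and one singleton row) to fix $z(11,5)$. At $8\times7$, pair counting gives $2|F|-3\cdot8\le21$, so $|F|\le22$. I would realize $|F|=22$ with seven Fano-type triple rows plus one singleton row; if a $C_4$ obstructs this, I would settle the correct maximum by the same degree-profile argument used for Theorem~\ref{thm:small}.

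Next, we classify all maximum bases up to row and column permutations, by degree profile as in Section~\ref{sec:finite}, and check the classification by exhaustive row-mask enumeration. The cell caps $U(m,n)$ then give the targets: exactly one more than the claimed value, with the corresponding forced pair and hole counts from \eqref{eq:cellidentity}. If the cap exceeds the target by more than one, complete-square heredity for $\SG$ (Lemma~\ref{lem:heredity}) reduces every longer display to the target length by deleting pair squares.

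\emph{Lower bounds.} For each size we give an explicit display on a maximum base with an actual-cell $\RW$ certificate, replayed by an independent reader. For $8\times7$ a natural candidate is to take the $7\times7$ Fano witness of Theorem~\ref{thm:seven-square} (length $33$) and add a row. A row extension could come from the chain mechanism of Lemma~\ref{lem:parallel}, gaining $5$ squares from one singleton and four pairs spanning the old and new rows.

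\emph{Upper bounds (main obstacle).} We run the branching search of Proposition~\ref{prop:four-signed} over every base type. At each step we pick a remaining free cell and either make it a hole, if the hole budget allows, or pair it with each admissible partner. Three prunings are used, all justified as before. First, one-pair unary filtering via Corollary~\ref{cor:unaryfilter}. Second, rejection of any partial family that already fails $\SG$; this is valid because $\SG$ is inherited by retained pairs (Lemma~\ref{lem:heredity}). Third, a matching-capacity bound. The only symmetry reduction is swapping identical untouched rows.

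The hard part is the size of the search, especially at $8\times7$ with roughly $17$ pairs among $34$ free cells. To control it, I would first add guarded row-deletion cuts where a degree-one row deletion leaves a maximum base whose signed value is already known: $\zs(7,7)=33$ at $8\times7$, and $\zs(10,5)=34$ at $11\times5$. This forces many pairs to touch the deleted row, as in \eqref{eq:deletioncuts}. I would then verify the search with an independent parity-graph reader that matches node counts, so that both exclusions rest on the signed criterion alone.
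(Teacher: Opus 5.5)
\textbf{Setup errors.} Your value of $z(11,5)$ is wrong, and the error changes the logic. Since $\binom52=10\le11$, the rectangle lies in the terminal range of Lemma~\ref{lem:cap}, so the strict subterminal bound does not apply. In fact $z(11,5)=21$, attained by the ten-pair-rows-plus-one-singleton-row base you go on to mention, which already contradicts your bound of $20$. The equality case of Lemma~\ref{lem:cap} makes this the unique maximum base up to permutations. With your value the cap would be $U(11,5)=\fl{(55+20)/2}=37$, and the $11\times5$ exclusion would look vacuous. With the correct value $U(11,5)=38$. Since $mn+z(m,n)$ is even at both sizes ($76$ and $78$), \eqref{eq:cellidentity} forces both targets, $38$ and $39$, to be hole-free.

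At $8\times7$ your classification misses a class. The profile $2^2\,3^6$ gives six Fano triples plus two of the three pairs of the seventh (type~A of Lemma~\ref{lem:bases-11-5-8-7}), alongside Fano plus a singleton row. Type~A has no degree-one row, and deleting a pair row leaves a $20$-cell, non-maximum $7\times7$ base. So your guarded-deletion cut through $\zs(7,7)=33$ is unavailable there.

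Your lower-bound suggestion also undercounts. A one-row module of Lemma~\ref{lem:parallel} at width seven adds three net pairs and one singleton, which is four squares. It therefore takes the length-$33$ Fano witness only to $37$. Reaching $38$ would require also absorbing old holes, which that lemma does not cover and which needs its own certificate. The paper simply exhibits checked witnesses.

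\textbf{Comparison of the exclusion.} With these corrections your logical skeleton is the paper's: the hierarchy, recursive-line witnesses, hole-free targets at the cap, and pruning by failed signed subfamilies via Lemma~\ref{lem:heredity}. Your deletion cut is also sound, via soundness of $\SG$ plus irreducibility heredity. The exclusion engine, however, differs.

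You propose the direct branching enumeration of Proposition~\ref{prop:four-signed}. On these bases its identical-row swap has no instance. Your deletion cuts are weak: on the $11\times5$ base and on type~B they force only $t\ge3$ and $t\ge5$ respectively, i.e.\ at most one pair lying wholly inside the singleton row. So nothing in your plan actually controls a $34$-cell, $17$-pair search.

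The paper instead fixes a row region $R$:
\begin{itemize}
\item the four pair rows through the singleton's column at $11\times5$;
\item the two pair rows of type~A;
\item the three triples through the singleton's column on type~B.
\end{itemize}
It enumerates every accepted region pattern and reduces them by base automorphisms to $229$, $190$ and $18$ representatives. Each domain is then refuted as a SAT instance with exactly-one clauses and millions of counterexample-generated cut clauses. Every cut is re-checked as a failed signed display, and the refutations are LRAT proofs accepted by cake\_lpr. That automorphism reduction and clause learning are what make the exclusion tractable and auditable. Your route is valid in principle, but its feasibility is left open.
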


\begin{lemma}\label{lem:bases-11-5-8-7}
One has $z(11,5)=21$, and the maximum base is unique up to row and
column permutations: ten rows carry the ten column pairs once each, and
one row is a singleton.
One has $z(8,7)=22$, and there are exactly two classes of maximum bases:
\begin{itemize}
\item type~A replaces one triple of a Fano triple system by two of its
      three pairs;
\item type~B adds one singleton row to a Fano triple system.
\end{itemize}
\end{lemma}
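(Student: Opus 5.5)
For $11\times5$ I use Lemma~\ref{lem:cap}. Since $11\ge\binom52=10$, the terminal formula gives $z(11,5)=11+10=21$. Its equality description forces one degree-two row for each of the ten column pairs and one further row of degree one. This determines the base up to row order and the choice of the singleton's column, and permuting columns moves any column to any other. So the base is unique up to row and column permutations, and that part needs no more work.

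For $8\times7$ the row count is below $\binom72=21$, so I plan to use pair counting with row degrees $d_r$. We have $\sum_r\binom{d_r}{2}\le21$, and since $\binom d2\ge 2d-3$ we get $2|F|-3\cdot8\le21$, so $|F|\le22$. Equality in $\binom d2\ge2d-3$ holds only at $d=2,3$, so a 22-cell base satisfies one of two conditions. Either every row has degree $2$ or $3$, and then exactly one column pair is left uncovered. Or the slack of one is spent elsewhere: one row has degree $1$ (slack $1$) or degree $4$ (slack $1$), and all column pairs are covered exactly once.

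I then enumerate profiles with $\sum d_r=22$ over eight rows, using $k$ triples and the equations $3k+2(8-k)=22$, or their variants with one odd row.
\begin{itemize}
\item All rows of degree $2$ or $3$: this forces six triples and two pair rows, covering $18+2=20$ of the $21$ pairs.
\item One singleton row: this forces seven triples covering all $21$ pairs, which is a Steiner triple system on seven points, hence a Fano plane. Adding the singleton row gives type~B.
\item One degree-four row: the other seven rows sum to $18$ with every pair covered exactly once. The pair count gives $6+\sum_{\text{others}}\binom{d_r}2=21$, so the other rows contribute $15$. With degrees in $\{2,3\}$ this means five triples and two pairs. A degree-four row already forces $\binom42=6$ pairs. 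I expect a short argument to rule this case out: a column outside the $4$-set must meet each of the four columns of that row through distinct rows. Each such row contains at most one column of the $4$-set, and the rows of the three outside columns must cover the $4\times3=12$ cross pairs plus the $3$ pairs among outside columns. I would check by a parity/degree count at a column that it cannot be completed. If this gets messy, I would instead cite the exhaustive row-mask enumeration used in Section~\ref{sec:finite}.
\end{itemize}

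The main step is the six-triples case: six triples and two pair rows on seven points, pairwise meeting in at most one point, leaving exactly one pair uncovered. I plan to show that the six triples extend to a Fano plane. Count the uncovered pairs at each point in the six-triple system. The six triples cover $18$ pairs, so three pairs remain. Each point has even "triple-degree" deficit: its degree $6$ is covered by triples in steps of $2$, so its number of uncovered pairs is even. The three uncovered pairs therefore form a graph with all degrees even on $7$ vertices and $3$ edges, which must be a triangle. Adding that triangle as a triple gives a Steiner triple system, hence the Fano plane by the uniqueness argument already given in Theorem~\ref{thm:seven-square}. The two pair rows are two of the triangle's three edges, which is type~A.

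It remains to show that the two types are non-isomorphic and each is unique. Type~B has a degree-one row and type~A does not, so they differ. Uniqueness within each type follows because the Fano collineation group is transitive on lines, on flags (for the anchor of the singleton row in type~B), and on the choice of which pair of a line is omitted. I expect the obstacle to be making the degree-four exclusion airtight. The fallback is a direct finite enumeration with canonical forms, as in Section~\ref{sec:finite}.
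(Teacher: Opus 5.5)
\textbf{Gap: the degree-four profile at $8\times7$.} Your outline otherwise follows the paper's proof step for step. It uses the terminal formula at $11\times5$, the slack $\binom d2-(2d-3)=(d-2)(d-3)/2$ summing to at most one when $|F|=22$, the even-degree argument that makes the three pairs missed by six triples a triangle, Fano uniqueness, and transitivity of the Fano group on points and flags.

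The degree-four profile is left as an expectation, and it is also miscounted. With one row of degree four, the other seven rows have degree sum $18$ and pair sum $15$. With $k$ triples this gives $3k+(7-k)=15$, so $k=4$ and the profile is $2^3\,3^4\,4$. It is not five triples and two pair rows, which would have degree sum $23$ and pair sum $17$. Your fallback does not close the case either: the row-mask enumeration in Section~\ref{sec:finite} classifies only the $5\times6$, $6\times6$ and $7\times6$ bases, not $8\times7$.

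\textbf{The missing step.} Let $A$ be the four columns of the degree-four row and $B$ the remaining three. Every pair inside $A$ is already covered by that row, so each other row meets $A$ in at most one column. Hence each of the four triples has at least two columns in $B$, and so contains a pair inside $B$. Two triples cannot contain the same such pair, since that would be a $C_4$. But $B$ has only three pairs, a contradiction. This is exactly the paper's argument.

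Your cross-pair idea also works once the profile is correct. In this profile every column pair is covered, so all $12$ pairs of $A\times B$ must be covered. The seven rows cover at most $4\cdot2+3\cdot1=11$ of them. With your erroneous five-triple profile this count gives $12$ and no contradiction, so the arithmetic slip matters for the route you sketched. With this case closed, the rest of your argument goes through as written.
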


\begin{proof}
With row degrees $d_r$, $C_4$-freeness gives
$\sum_r\binom{d_r}2\le\binom n2$.
\begin{itemize}
\item \emph{$11\times5$.} Since $11\ge\binom52$, Lemma~\ref{lem:cap} gives
  $|F|\le21$ and forces ten rows of degree two, one on each column pair,
  and one row of degree one. Column permutations are transitive on the
  singleton's column.
\item \emph{$8\times7$.} The inequality $2d\le3+\binom d2$ gives
  $2|F|\le24+21$, so $|F|\le22$. At $|F|=22$, $\sum_r(d_r-2)(d_r-3)/2\le1$,
  so every row has degree two or three, with at most one row of degree one
  or four. This leaves the degree profiles $2^2\,3^6$, $1\,3^7$ and
  $2^3\,3^4\,4$.
\end{itemize}
In the last profile, each triple meets the degree-four row in at most one
column, since the base has no $C_4$. So each of the four degree-three
rows contains at least two of the three columns outside the degree-four
row, and hence a pair of them. Two triples share at most one column, so
these four pairs are distinct. Only three such pairs exist, so this
profile is impossible.

For $1\,3^7$, the seven triples cover all $21$ column pairs, so they form
a Fano system. The remaining row is a singleton, which gives type~B.

For $2^2\,3^6$, the three column pairs missed by the six triples have even
degree at every point, so they form a triangle. Adding that triangle
completes a Fano system, and the two pair rows are two of its edges,
which gives type~A.

The Fano system is unique up to relabelling: its three triples through
one point may be labelled $012,034,056$; choosing the triple through
$1,3$ as $135$ forces the remaining triples $146,236,245$.
Its automorphism group is transitive on points and on incident
point--triple flags. Hence each type is a single class.
\end{proof}

\begin{proof}[Proof of Theorem~\ref{thm:signed-exclusions}]
\emph{Lower bounds.} Explicit recursive-line displays of lengths $37$ and
$38$ on maximum bases were checked by the reference closure and an
independent C++ closure in the recorded audit. The portable replay in
packet X115/X87 rechecks them with the reference closure.

\emph{Reduction to hole-free displays.} We have
\[
U(11,5)=\lfloor(55+21)/2\rfloor=38,\qquad U(8,7)=\lfloor(56+22)/2\rfloor=39 .
\]
In both cases $mn+z(m,n)$ is even. So by the cell count
$mn=|F|+2|D|+H$, a display of length $U$ has no hole. It therefore
suffices to show that none of the three maximum bases of
Lemma~\ref{lem:bases-11-5-8-7} carries a hole-free display satisfying
$\SG$.

\emph{Heredity.} Suppose $(F,D)$ satisfies $\SG$ and $S\subseteq D$. Then
the display $(F,S)$, in which the cells of $D\setminus S$ become holes,
also satisfies $\SG$ (Lemma~\ref{lem:heredity}). We use this in two
ways:
\begin{itemize}
\item every pair of a signed display is a \emph{candidate}, that is, a
      pair whose one-pair display is accepted;
\item a set $S$ of candidates whose display $(F,S)$ fails $\SG$ cannot lie
      in a signed display. Such an $S$ is a \emph{cut}.
\end{itemize}

\emph{Region domains.} Fix a set $R$ of rows:
\begin{itemize}
\item at $11\times5$, the four pair rows through the singleton's column;
\item on type~A, its two pair rows;
\item on type~B, the three triples through the singleton's column.
\end{itemize}
The \emph{pattern} of a display is its set of pairs with both cells in
$R$. By heredity it is itself an accepted display on candidates. An
exhaustive enumeration of all partial matchings of the free cells of $R$
lists every such pattern. Every accepted pattern ($4{,}968$, $1{,}449$
and $353$ of them) is mapped onto one of $229$, $190$ and $18$ listed
representatives by a base automorphism preserving $R$. Each map is checked
to preserve the base.

\emph{Propositional encoding.} For a base, take one variable $x_P$ for
each candidate pair $P$, and the following clauses:
\begin{itemize}
\item exactly one selected pair meets each free cell;
\item for every cut $S$ found, the clause $\bigvee_{P\in S}\neg x_P$.
\end{itemize}
Call this formula $B$. For each representative $d$, let $L_d$ be the unit
literals fixing the region pattern to $d$. A hole-free signed display,
moved by an automorphism so that its pattern is a representative,
satisfies $B\wedge L_d$ for that $d$. The formulas contain no
symmetry-breaking clauses; the identical-row swap of
Proposition~\ref{prop:four-signed} has no instance on these bases.
Hence it suffices to refute $B\wedge L_d$ for every representative $d$.
The final formula adds \emph{selector} clauses: a new variable $s_d$
for each $d$, the clauses $s_d\Rightarrow L_d$, and $\bigvee_d s_d$.

The cuts were generated by counterexample-guided search. Their
provenance is irrelevant to the proof, because every cut is re-checked.

\emph{Refutations.} Each refuted formula is a subset of $B\wedge L_d$: the unit clauses
$L_d$, the propagating clauses, and the clauses of $B$ not satisfied
by unit propagation from $L_d$. The checker verifies this subset relation line by
line.

For a few domains the refutation is split on the partner of one free cell
$c$ outside $R$. Since $B$ contains the clause $\bigvee_{P\ni c}x_P$,
refuting $B\wedge L_d\wedge x_P$ for every candidate $P\ni c$ refutes
$B\wedge L_d$.

Every refutation is a CaDiCaL LRAT proof. It is accepted by cake\_lpr, a
formally verified checker \cite{CakeLPR}, and separately by the LRAT checker of the
drat-trim distribution.

\emph{Audit.} Every clause of each final formula is classified:
\begin{itemize}
\item exactly-one clauses are rebuilt from the candidate list, and
      selectors from the representative list;
\item every other clause must be a negative clause on candidates, and its
      pair set is re-checked as a failed display, by the C++ closure and by
      the reference closure;
\item every excluded non-candidate pair is re-checked;
\item the region literals are rebuilt from the representatives.
\end{itemize}
The coverage enumeration and the candidate/non-candidate split were
repeated with the reference closure.
In addition, a random sample of $34$ domains and split cases, at least
eight for each base, was refuted again by kissat, and drat-trim accepted
all $34$ DRAT proofs.

The counts are as follows.
\begin{center}
\begin{tabular}{lrrrrl}
\toprule
Base&Candidates&Domains&Cut clauses&Split domains&Split cell cases\\
\midrule
$11\times5$&387&229&5{,}168{,}225&4&$4\times21$\\
$8\times7$ type A&326&190&946{,}334&1&16\\
$8\times7$ type B&312&18&589{,}720&1&15\\
\bottomrule
\end{tabular}
\end{center}
The cut counts refer to the final formula of each base. At $11\times5$ and
on type~A, $213$ and $189$ domains respectively were refuted from an
earlier formula of the same kind, which was audited in the same way
($3{,}956{,}309$ and $946{,}270$ cut clauses).

No hole-free signed display of length $U(m,n)$ therefore exists, so
$\zs(11,5)\le37$ and $\zs(8,7)\le38$. The hierarchy completes the proof.
\end{proof}

\begin{remark}
These are upper bounds for the signed criterion, not for the actual
square length. The actual values remain in the intervals
$37\le\zt(11,5)\le38$ and $38\le\zt(8,7)\le39$.

Together with the other finite values and the width-five formula of
Xi and Chang \cite{XiChangFive},
Theorem~\ref{thm:signed-exclusions} completes the width-five comparison
(Corollary~\ref{cor:five-and-six}). At $8\times7$
the common value $38$ lies one below the cell cap $U(8,7)=39$.

At $14\times4$ the remaining interval
$37\le\zr(14,4)\le\zs(14,4)\le38$ is the subject of
Problem~5.1 in \cite{CC}.
\end{remark}

\begin{proposition}\label{prop:nine-five}
One has $\zr(9,5)=\zs(9,5)=\zt(9,5)=31$.
\end{proposition}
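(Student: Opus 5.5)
The plan follows the template of Theorems~\ref{thm:small} and \ref{thm:further-finite}: a recursive-line lower witness plus an actual-SOS upper bound excluding length $32$. The hierarchy \eqref{eq:hierarchy} then gives equality of all three parameters.

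\emph{Ordinary base.} Pair counting with $\binom d2\ge2d-3$ gives $2|F|-3\cdot9\le10$, so $|F|\le18$. Equality forces degree profile $3\,2^8$ or $3^2\,2^6\,1$ with all ten column pairs covered. In the first profile, one triple and the seven remaining pair rows leave one row with two cells; those two cells would have to be a repeated pair, so this profile fails. In the second profile, two triples sharing one column cover six pairs, the other four pairs fill four rows, and the last row would need a fifth pair, which is impossible.

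We therefore recount with $|F|=17$: one triple plus all seven remaining pair rows plus one singleton row (the $8\times5$ base with an added singleton), or ten pair rows with no triple. I expect $z(9,5)=19$ from the second option, namely all ten pairs in nine rows with one triple $\{a,b,c\}$ replacing $ab,bc,ac$ and nothing else, which gives $3+7\cdot2=17$. The precise value and classification must be settled first, by the same exhaustive row-mask enumeration used in Section~\ref{sec:finite}. The cap $U(9,5)=\lfloor(45+z)/2\rfloor$ then fixes the target length $32$ and its hole count. The base types are listed up to row and column permutation, with anchor positions distinguished as in Theorem~\ref{thm:further-finite}.

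\emph{Lower bound.} We give an explicit $31$-square display on a maximum base with an actual-cell $\RW$ certificate, checked by the reference closure. A natural source is deleting a suitable row from the $10\times5$ witness of length $34$ (Theorem~\ref{thm:further-finite}) so that three squares are lost and the base stays maximum. Otherwise we search directly.

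\emph{Upper bound.} Suppose an irreducible display of length $32$ exists. Corollary~\ref{cor:unaryfilter} restricts selected pairs to unary-accepted candidates, and the remaining choices are excluded by verified local shortenings such as \eqref{eq:fork6}, \eqref{eq:fork5}, \eqref{eq:sqrtthree}, rank reductions, or weighted identities \eqref{eq:weightedidentity}. Guarded row deletion supplies cuts from $\zt(8,5)=27$ and $\zt(7,5)=24$, applied only to rows whose deletion leaves a maximum base. Deleting one degree-two row retained from a maximum $8\times5$ base forces $t\ge4$ touched pairs. The propositional formula is then refuted by a checked DRAT/LRAT proof. Complete-square heredity removes the cap-level target if it exceeds $32$.

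The main obstacle is this upper bound. The clause set must be strong enough for refutation while every cut remains a genuine shortening on a maximum base. If row-deletion cuts prove weak, I would add region-pattern domains with automorphism representatives, as in Theorem~\ref{thm:signed-exclusions}, but replace signed cuts by actual SOS cuts.
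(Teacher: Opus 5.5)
There is a genuine error in your ordinary-base step, and it sends the rest of the plan after a target that does not exist. Equality $|F|=18$ on nine rows means $\sum_r(2d_r-3)=9$ together with $\sum_r\binom{d_r}2\le10$, that is, $\sum_r(d_r-2)(d_r-3)/2\le1$. This allows the profiles $2^9$ (nine distinct column pairs) and $1\,3\,2^7$ (a triple, the seven other pairs, and a singleton row), and both are $C_4$-free. The profiles you tested, $3\,2^8$ and $3^2\,2^6\,1$, have degree sum $19$, not $18$. Hence $z(9,5)=18$. Your expectation $z(9,5)=19$ is impossible: since $9<\binom52$, Lemma~\ref{lem:cap} gives $z(9,5)\le 9+10-1=18$. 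Your ``second option'' also has only $17$ cells, by your own count.

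With $z(9,5)=18$ the cell cap is $U(9,5)=\fl{63/2}=31$, and Lemma~\ref{lem:cap} already gives $N\le31$ for every limited simple display, so $\zt(9,5)\le31$. There is no length-$32$ target. The unary filtering, shortening identities, guarded deletions and DRAT/LRAT refutation that you call the main obstacle are unnecessary.

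The entire content is the lower bound. You need one explicit display on a maximum base with $18$ singletons and $13$ pairs, hence exactly one hole by \eqref{eq:cellidentity}, whose recursive-line closure is verified. This is exactly the paper's proof: it exhibits such a witness on the base of nine pair rows omitting the column pair $\{3,4\}$, and then the cap and \eqref{eq:hierarchy} give $\zr=\zs=\zt=31$.

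Your row-deletion idea could supply such a witness, but only conditionally. The $10\times5$ witness has $|F|=20$, $|D|=14$ and two holes. Deleting a row loses exactly three squares only if its three free cells are two holes and one pair cell, or one hole and an in-row pair; otherwise at least four squares are lost.
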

\begin{proof}
Lemma~\ref{lem:cap} gives $z(9,5)\le18$. The following display has
18 singleton cells and 13 pairs, and its singleton set is $C_4$-free.
Equal numbers form one selected pair; bullets are singletons and the
circle is a hole. Packet BOUNDARY checks its recursive-line closure.
\[
\begin{array}{c|ccccc}
 &0&1&2&3&4\\\hline
0 & \bullet & \bullet & 1 & 4 & 5\\
1 & \bullet & 3 & \bullet & 7 & 4\\
2 & \bullet & 1 & 2 & \bullet & 9\\
3 & \bullet & 6 & 8 & 12 & \bullet\\
4 & 9 & \bullet & \bullet & 12 & 10\\
5 & 11 & \bullet & 13 & \bullet & 2\\
6 & 13 & \bullet & 11 & 8 & \bullet\\
7 & 10 & 5 & \bullet & \bullet & \circ\\
8 & 6 & 7 & \bullet & 3 & \bullet\\
\end{array}
\]
The base is maximum and $U(9,5)=\lfloor(45+18)/2\rfloor=31$, so the
hierarchy proves the assertion.
\end{proof}

\begin{corollary}\label{cor:five-and-six}
For every $m\ge1$, $\zr(m,5)=\zs(m,5)$. Consequently
$\zr(m,n)=\zs(m,n)$ for every rectangle with $\min(m,n)\le6$, except
possibly $14\times4$ and $4\times14$.
\end{corollary}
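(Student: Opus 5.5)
The plan is a case analysis on $m$ for width five, using transposition for the first assertion and combining the existing widths for the second. For $n=5$ we cover all $m\ge1$ as follows. At $m=1$ every cell is fixed, so both parameters equal $5$. For $2\le m\le 4$ we transpose to widths two, three and four. There the known exact values $\zr=\zs=\zt$ of \cite{LQ} and \cite{XY} (two-column identity, three-column theorem, $4\times4$ and $5\times4$ values) give equality. For $m=5,6$ we use Theorem~\ref{thm:small} transposed ($\zt(5,6)=20$ with $\zt(6,5)=20$), together with the earlier small values in \cite{LQ,XY} for $5\times5$. For $m=7,8,10$ we use Theorem~\ref{thm:further-finite}, and for $m=9$ Proposition~\ref{prop:nine-five}. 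For $m=11$ we use Theorem~\ref{thm:signed-exclusions}, and for every $m\ge12$ the width-five formula of Theorem~\ref{thm:otherwidths}, i.e.\ \cite{XiChangFive} together with the hierarchy \eqref{eq:hierarchy}. Since $\zr\le\zs\le\zt$, any point where $\zr=\zt$ gives $\zr=\zs$, and at $11\times5$ we have the signed equality directly.

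For the second assertion we reduce to the shorter side $n=\min(m,n)\le6$ by transposition. The recursive-line and signed rules are symmetric in rows and columns, so both parameters are transpose-invariant. Widths one and two are covered by the exact equalities of \cite{LQ}. Width three is covered by the three-column theorem of \cite{LQ}, which gives all three equal for every length. Width four is Corollary~\ref{cor:four-exception}, which leaves only $m=14$. Width five is the first assertion. Width six is Theorem~\ref{thm:six}, which gives $\zr=\zs=\zt$ for every $m\ge2$, while at $m=1$ all cells are fixed. Combining these gives equality except at $14\times4$ and its transpose.

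The main obstacle is bookkeeping at the small width-five lengths. We have to confirm that every $m\le6$ and $m=5,6$ value is actually recorded somewhere. I expect $5\times5$ to come from \cite{LQ} or \cite{XY}. If not, it would follow by transposing whatever width-five small value those papers list, or else from a direct cap argument $U(5,5)$ with a recursive witness. I would check each citation's range carefully, and treat any missing small case with an explicit recursive witness attaining the cell cap $U(m,5)$ from Lemma~\ref{lem:cap}.
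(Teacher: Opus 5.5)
Your proof is correct and follows the paper's own argument essentially step for step. Both use the same row-by-row split at width five: prior values from \cite{LQ,XY} for $m\le5$, Theorem~\ref{thm:small} transposed at $m=6$, Theorem~\ref{thm:further-finite} at $m=7,8,10$, Proposition~\ref{prop:nine-five} at $m=9$, Theorem~\ref{thm:signed-exclusions} at $m=11$ and Theorem~\ref{thm:otherwidths} for $m\ge12$. Both then finish with transposition, widths one to three from \cite{LQ}, Corollary~\ref{cor:four-exception} and Theorem~\ref{thm:six}.

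One caution concerns your fallback of a recursive witness attaining the cell cap: it would fail at $5\times5$. There the common value $17$ lies below $U(5,5)=18$, so the cited actual upper bound from \cite{LQ,XY} is genuinely needed.
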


\begin{proof}
Both criteria are invariant under transposition. At width five, rows one
through four follow from the one-, two- and three-column values and the
$5\times4$ equality in \cite{LQ,XY}. At five rows the common value is $17$
\cite{LQ,XY}; at six rows it is $20$ by Theorem~\ref{thm:small} and
transposition. Theorem~\ref{thm:further-finite} gives $24,27,34$ at rows
seven, eight and ten. Proposition~\ref{prop:nine-five} gives $31$ at nine
rows. Eleven rows are covered by Theorem~\ref{thm:signed-exclusions},
and every $m\ge12$ by Theorem~\ref{thm:otherwidths}.
Widths one to three are in \cite{LQ}, width four is
Corollary~\ref{cor:four-exception}, and width six is Theorem~\ref{thm:six}.
This covers every rectangle with a side at most six, with the stated
exception.
\end{proof}

\subsection{A fixed-base gap}\label{sec:fixed-gap}
\begin{proposition}\label{prop:fixed-base-gap}
There is a maximum $7\times5$ base $F$ with $R(F)=8$ and $S(F)=9$,
although $\zr(7,5)=\zs(7,5)=24$.
\end{proposition}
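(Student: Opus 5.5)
We will exhibit one specific maximum $7\times5$ base $F$ and determine both fixed-base optima by finite computation; the comparison with $\zr(7,5)=\zs(7,5)=24$ is already Theorem~\ref{thm:further-finite}. Since $z(7,5)=15$, the value $24$ corresponds to $9$ pairs, and the cell count $35=15+2|D|+H$ gives $|D|\le10$ with $H$ odd. So the task is to find one of the four base types from the proof of Theorem~\ref{thm:further-finite} on which a signed display with $9$ pairs (one hole) exists, while every recursive-line display has at most $8$ pairs. We expect the right base to be one on which a signed-only mechanism, such as an ungrounded odd component as in Remark~\ref{rem:final-graph} or a component forced by opposite inequalities, is unavoidable for $9$ pairs.

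\textbf{Lower bounds.} We will give an explicit $8$-pair display on $F$ with a recursive-line certificate, which gives $R(F)\ge8$. We will also give an explicit $9$-pair, one-hole display on $F$ satisfying $\SG$, checked by the signed closure, which gives $S(F)\ge9$. Since $\zs(7,5)=24$, $S(F)\le9$, so $S(F)=9$. For the $9$-pair signed display we also record why $\RW$ fails, for instance via the final transfer graph after identification, though this is not logically required.

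\textbf{Upper bound $R(F)\le8$.} This is the main obstacle. We must show that no $9$-pair display on $F$ (necessarily with exactly one hole among the $20$ free cells) satisfies $\RW$. The plan mirrors the signed exclusions in Theorem~\ref{thm:signed-exclusions}, but uses the recursive-line closure throughout. First, we list candidate pairs, namely those whose one-pair display is accepted; by Theorem~\ref{thm:unary} these are exactly the irreducible ones. Next, we branch on the hole position, with $20$ cases reduced by automorphisms of $F$. Then we exhaustively enumerate perfect matchings of the remaining $19$ cells minus one, pruning by heredity: Lemma~\ref{lem:heredity} states that deleting complete pair squares preserves $\RW$ admissibility, so a failed subfamily is a valid cut. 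The search space is small enough for direct enumeration, with roughly $17!!$ matchings before pruning and far fewer after candidate filtering. We would cross-check the enumeration with an independent closure implementation, or alternatively encode it as a SAT instance with cut clauses and a checked DRAT refutation, as in Section~\ref{sec:finite}.

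Finally, the statement notes that this separation occurs only at the fixed-base level. The global equality $\zr(7,5)=24$ is realized on a different maximum base, and the signed witness on $F$ does not contradict it, because $15+9=24$ equals the global value.
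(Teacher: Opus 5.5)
Your route is the paper's: explicit witnesses for $R(F)\ge8$ and $S(F)\ge9$, the global value $24$ for $S(F)\le9$, and an exhaustive recursive-line search for $R(F)\le8$ that is pruned only by complete-pair heredity. The paper cites $\zt(7,5)=24$ for the upper bound on $S(F)$; your $\zs(7,5)=24$ works equally well. However, the setup of the search has a concrete counting error. On a maximum base there are $35-15=20$ free cells, so $H=20-2|D|$ is \emph{even}, not odd. A nine-pair display therefore has exactly two holes. Consequences:
\begin{itemize}
\item The ``nine-pair, one-hole'' signed witness you promise does not exist. The paper's signed witness in Figure~\ref{fig:fixed-base-gap} has holes at $(0,4)$ and $(3,0)$.
\item Branching on a single hole position in $20$ ways does not cover the search domain.
\item The exclusion must range over all $\binom{20}{2}=190$ two-hole sets, or branch on both holes, and then over every pairing of the remaining $18$ cells. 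This is what the paper does.
\item Your phrase ``the remaining $19$ cells minus one'' is only correct if that extra cell is declared a second hole and is itself branched over all its positions.
\end{itemize}
As literally written, your enumeration is either empty or incomplete, so it would not establish $R(F)\le8$.

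Second, the proposition is an existence statement. Its content is the choice of $F$ together with the two witnesses, and your proposal supplies neither; it only conjectures which base type will work. This matters, because the paper notes that another maximum $7\times5$ base does admit nine recursive pairs, so a search on the wrong type fails. The paper takes the base with row supports $012,034,13,14,23,24,1$. This is two triples meeting in column $0$, the four remaining pair rows, and a singleton anchored in the outer column $1$. It exhibits the eight-pair recursive and nine-pair signed displays of Figure~\ref{fig:fixed-base-gap}, and then runs the $190$-case exhaustive enumeration. Until you name such an $F$ and its two displays, your argument is a search procedure whose success is not guaranteed, rather than a proof.
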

\begin{proof}
Take the $7\times5$ base with row supports
\[
 012,\quad034,\quad13,\quad14,\quad23,\quad24,\quad1.
\]
It has 15 singletons and is ordinary maximum by pair counting.
Figure~\ref{fig:fixed-base-gap} displays an eight-pair recursive witness
and a nine-pair signed witness; packet \texttt{FIXED\_BASE\_NINE} contains their certificates.

 A complete recursive-line exclusion fixes each of the 190
possible two-hole sets and exhausts every pairing of the other 18 free
cells. Only rejection of a recursive-line subfamily prunes the search,
which is valid by complete-pair heredity. An independent physical-cell
reader exhausts 764,150 prefixes and 2,617,789 closure calls, finding
no nine-pair recursive display. Every larger recursive display would
contain such a subfamily. Thus $R(F)=8$. The signed lower witness and
the actual bound $\zt(7,5)=24$ prove $S(F)=9$.
Another maximum base at $7\times5$ admits nine recursive pairs, so
this example is consistent with the global equality in
Theorem~\ref{thm:further-finite}.

\end{proof}

\begin{figure}[ht]
\centering
\[
\begin{array}{c|ccccc}
 &0&1&2&3&4\\\hline
0&\bullet&\bullet&\bullet&6&\circ\\
1&\bullet&\circ&7&\bullet&\bullet\\
2&\circ&\bullet&\circ&\bullet&5\\
3&5&\bullet&4&1&\bullet\\
4&3&7&\bullet&\bullet&6\\
5&4&1&\bullet&2&\bullet\\
6&2&\bullet&8&8&3\\
\end{array}\qquad\begin{array}{c|ccccc}
 &0&1&2&3&4\\\hline
0&\bullet&\bullet&\bullet&7&\circ\\
1&\bullet&1&3&\bullet&\bullet\\
2&3&\bullet&4&\bullet&7\\
3&\circ&\bullet&6&5&\bullet\\
4&5&2&\bullet&\bullet&1\\
5&6&8&\bullet&8&\bullet\\
6&2&\bullet&9&9&4\\
\end{array}
\]
\caption{The same maximum singleton base at $7\times5$.
Left: eight recursive-line pairs. Right: nine signed pairs.
Each bullet is a distinct singleton, each circle is a hole, and equal
numbers within one grid form a selected pair. The right grid is signed
admissible; no recursive-line display on this base has nine pairs.}
\label{fig:fixed-base-gap}
\end{figure}

\appendix
\section{Certificates and verification scope}\label{app:data}
The manuscript gives the structural proofs and mathematical reductions
for the computer-assisted results. The infinite exact families also use
external symbolic record tables satisfying
Lemma~\ref{lem:certificate-lifting}; the finite tables and their exact
domain checks are essential supporting evidence. The component map is:
\begin{center}\small
\begin{tabular}{ll}
\toprule
Component&Claims supported\\\midrule
FINITE\_SIX&Theorem~\ref{thm:small}: actual six-column upper bounds\\
SIX\_FAMILY&All-length six-column construction\\
FINITE\_FIVE, FINITE\_FOUR&Theorem~\ref{thm:further-finite}\\
F7&The $7\times7$ equality\\
S4&Signed exclusions at $11\times4$ and $12\times4$\\
BOUNDARY&The $9\times5$ witness\\
FIXED\_BASE\_NINE&Proposition~\ref{prop:fixed-base-gap}\\
F16&Proposition~\ref{prop:full-four-distinction}\\
O5\_EXTENSION&Shared one-row certificate readers for O7\\
O7, O9&Odd-width families at seven and nine\\
O11, O11\_BRIDGE&Width eleven and its two boundary cases\\
X115/X87&Theorem~\ref{thm:signed-exclusions}\\\bottomrule
\end{tabular}
\end{center}
The finite SOS upper bounds use explicit identities and checked clausal
refutations. The S4 exclusions and the fixed-base recursive exclusion use
independent exhaustive implementations rather than separate clausal
refutations; reproducing those bounds requires rerunning the enumerations. Signed exclusions use failed signed subdisplays and
Lemma~\ref{lem:heredity}; they are not actual-SOS exclusions. X115/X87
contains the formulas, domain representatives, coverage records and
complete audit receipts described in Section~\ref{sec:signed-exclusions}.
Its readers regenerate LRAT proofs rather than storing all traces.

The supporting archives are deposited on Zenodo at
\url{https://doi.org/10.5281/zenodo.23124764}.

The main companion is \texttt{focused\_evidence.zip}, with SHA-256
\begin{center}\footnotesize\texttt{f68d42718c764764d27233ab6e0ad5d0411a2aeeaab5a3de8998c44104f6c714}\end{center}
The X115/X87 packet is
\texttt{signed\_exclusions\_evidence.zip}, with SHA-256
\begin{center}\footnotesize\texttt{bb43da8af5d939f2d476c6c74bf70730fcf1441a1c79ef6bc767659374663d53}\end{center}
The README gives per-component replay commands, dependencies and tool
requirements. Integrity checks verify bytes only; sampled replay is not
full proof regeneration. 

The general density, two-column extension and transfer-graph results have
written proofs. Auxiliary finite checks do not establish their infinite
ranges. Lean sources formalize the coefficient/graph equivalence in
Theorem~\ref{thm:final-graph-relations} and the existence of a nonzero
product relation exactly when an ungrounded bipartite component exists.
The formal statement uses occupied classes and vanishing for all real row and column vectors; the
geometric bridge is proved by finite-sum identities. The dimension count,
SOS rank consequence, closure algorithms and computational bounds remain
outside the formalization. An abstract chain-propagation induction is
also included; its geometric premises remain assumptions.
The Lean toolchain and mathlib revision are pinned.

\paragraph{Acknowledgment.}
The author thanks Liqun Qi for helpful correspondence.

\paragraph{AI assistance and responsibility.}
AI tools assisted exploratory computations, software development and
draft preparation. The author is responsible for the mathematical
claims, exposition and supporting data.

\begingroup
\setlength{\bibsep}{2pt}
\renewcommand{\bibfont}{\small}
\raggedright
\bibliographystyle{plainnat}
\bibliography{references}
\endgroup
\end{document}